\documentclass[11pt,a4paper]{scrartcl}
\usepackage[utf8]{inputenc}
\usepackage[english]{babel}

\usepackage{enumerate}
\usepackage{latexsym} 
\usepackage{amsmath}
\usepackage{amsthm}
\usepackage{amssymb} 
\usepackage{etoolbox} 

\usepackage{tikz}
\usepackage{tikz-cd}
\usetikzlibrary{cd, matrix,arrows,decorations.pathmorphing, babel}
\usetikzlibrary{decorations.markings}
\usetikzlibrary{decorations.pathmorphing}
\usepackage{xcolor}
\usepackage{pgfplots}
\usepgfplotslibrary{polar}

\usepackage{thmtools}
\usepackage{thm-restate}

\usepackage[obeyspaces]{url} 
\usepackage{hyperref}
\usepackage{cleveref}
\usepackage[all]{hypcap} 

\newtheoremstyle{defistyle}
{}
{}
{}
{}
{\bfseries}
{}
{\newline}
{}

\newtheoremstyle{bewstyle}
{}
{}
{}
{}
{\bfseries}
{}
{3 pt}
{}

\theoremstyle{defistyle}
\newtheorem{defi}{Definition}[section]
\newtheorem{Bem}[defi]{Remark}
\newtheorem{Not}[defi]{Notation}

\newtheorem{Satz}[defi]{Theorem}
\newtheorem{Prop}[defi]{Proposition}
\newtheorem{theorem}[defi]{Theorem}
\newtheorem{Lem}[defi]{Lemma}
\newtheorem{Bsp}[defi]{Example}
\newtheorem{Kor}[defi]{Corollary}

\theoremstyle{bewstyle}
\newtheorem*{Bew}{proof:}
\AtEndEnvironment{Bew}{\qed}

\newcommand{\N}{\mathbb{N}}
\newcommand{\R}{\mathbb{R}}

\newcommand{\Z}{\mathbb{Z}}
\newcommand{\danger}{{\fontencoding{U}\fontfamily{futs}\selectfont\char 66\relax} \ }

\DeclareMathOperator{\Env}{Env}
\DeclareMathOperator{\EnvR}{Env_R}
\DeclareMathOperator{\cand}{cand}
\DeclareMathOperator{\vol}{vol}

\DeclareMathOperator{\supp}{supp}
\DeclareMathOperator{\relint}{relint}

\DeclareMathOperator{\Aut}{Aut}
\DeclareMathOperator{\Out}{Out}
\DeclareMathOperator{\Inn}{Inn}
\DeclareMathOperator{\Isom}{Isom}

\DeclareMathOperator{\PGL}{PGL}

\usepackage{todonotes}

\begin{document}

\title{Envelopes in Outer Space}
\author{Christian Steinhart}

\maketitle

\begin{abstract}
We study the geometry of Outer Space $CV_n$ in regard of the asymmetric Lipschitz metric via envelopes, that is the set of all geodesics between two points. In the simplicial structure of $CV_n$ the envelopes are polytopes. We construct a piecewise unique geodesic between any two points in $CV_n$ by concatenating edges of these polytopes. In fact rigid geodesics can be identified with edges of out- and in-envelopes, that is the set of all geodesics from or to a base point with a given maximally stretched path.

We introduce a notion of general position for pairs of points which is a dense and open condition. Using this we will show, that for almost all pairs of points in $CV_n$ their envelopes have dimension $3n-4$.

Whenever an envelope passes a face, it might change its dimension. This determines the simplicial structure of reduced Outer Space via the Lipschitz metric which implies 
$\Isom(CV_n^{red})=\Isom(CV_n)$. As another implication we get that a geodesic ray in $CV_2$ becomes after a given length rigid.
\end{abstract}

\tableofcontents

\section*{Introduction}
The Outer Space $CV_n$ also denoted $X_n$ was first introduced in 1986 by Culler and Vogtmann in \cite{CV86} to study the outer morphism group $Out(F_n)$.
They showed that $CV_n$ is contractible and $Out(F_n)$ acts with finite point stabilizers.
$CV_n$ can be seen as the Teichmüller analogon for metric graphs were $Out(F_n)$ plays the role of the mapping class group. So $CV_n$ is the moduli space of finite, marked, metric graphs with no leaves and fundamental group $F_n$. Reduced Outer Space $CV_n^{red}$ is the subset of $CV_n$ consisting of all elements with no separating edges in the corresponding graph. It is a deformation retract of $CV_n$.

 $CV_n$ is a $3n-4$ dimensional simplicial complex with some missing faces, where each open simplex corresponds to a graph with a marking. There are some missing faces, since we can't contract loops without changing the fundamental group of a graph.

Most of the previous work and results in Outer Space were done topologically and combinatorially. In 2008 Francaviglia and Martino introduced in \cite{FM11} a natural asymmetric metric on $CV_n$ similar to the Thurston metric in Teichmüller space (s. \cite{T86}) called the Lipschitz metric. Like the Thurston metric it can be calculated as the supremal stretching of curves. Stefano and Martino showed that for each $A \in CV_n$ there always exists a finite set of curves called candidates such that at least one of them is maximally stretched from $A$ to a $B \in CV_n$. Hence this distance can easily be calculated for any two points for example with the package \cite{CVcalc} written in Sage (\cite{sagemath}).

Some interesting properties of the Lipschitz metric are, that geodesics are almost never unique and that a geodesic from $A$ to $B$ does not have to be a geodesic from $B$ to $A$. In fact $CV_n$ is a geodesic space for the asymmetric metric, but not every pair can be connected with a symmetric geodesic. The freedom of a geodesic between two points can be described in terms of envelopes, which is the set of all points which lie on a geodesic from $A$ to $B$. In the case of Teichmüller space with the Thurston metric envelopes have been studied for the punctured torus in \cite{DLRT}. The envelopes in Outer Space can also be used to describe a coarse sense of direction, namely given a geodesic segment the out- and in-going envelopes describe all geodesics starting resp. ending with that segment.

Unique geodesics, which are also called rigid, play a crucial role in the understanding of the geometry of Outer Space, for example in the proof in \cite{FM12} that the isometry group $\Isom(CV_n)$ is $Out(F_n)$ for $n \geq 3$. In chapter 3 we will use envelopes to give an algorithm which yields a piecewise unique geodesic between any two points by concatenating consecutive edges of an envelope. This yields the following theorem:

\newtheorem*{thm:pwRigid}{Theorem \ref{unique geodesic segments}}
\begin{thm:pwRigid}
For each $A,B \in CV_n$ there exist geodesic segments $g_1, \dots, g_l$, s.t. for each $i$ $g_i$ is the unique asymmetric geodesic joining its two endpoints and $g=g_1 * g_2 * \dots *g_l$ is an asymmetric geodesic from $A$ to $B$.
\end{thm:pwRigid}

In fact theorem \ref{theorem:unique_geodesics_are_edges} states that rigid geodesic are exactly the edges of out- and in-envelopes.

It is an interesting fact that the dimension of an envelope may decrease whenever it passes a face. We can use this to determine faces in the reduced Outer Space in terms of envelopes and since envelopes are preserved under isometry we get that isometries of reduced Outer Space are simplicial. This was the missing step in \cite{FM12} for the reduced case and hence we get theorem

\newtheorem*{thm:isomRedCV}{Theorem \ref{THEOREM2}}
\begin{thm:isomRedCV}
The isometry groups of $CV_n^{red}$ in regard of the symmetric and both asymmetric Lipschitz-metrics are the same as in the non-reduced case: \[\Isom(CV_n^{red})=\Isom(CV_n)=\begin{cases}
\Out(F_n)&, \text{ if } n \geq 3\\
\PGL(2,\Z)&, \text{ if } n=2
\end{cases}\]
\end{thm:isomRedCV}

The decrease of dimension can be explained in terms of maximally stretched candidates. If the originally maximally stretched candidate $\gamma$ is no longer a candidate in an adjacent simplex, there must be at least two other candidates of that new face which are maximally stretched along $\gamma$. This gives a restriction on the points lying in the envelope and we may loose a dimension of freedom for all geodesics. In particular if a geodesic ray $g$ runs long enough, it must cross such a face and we get that each pair of points far enough in $g$ lie in a special position to each other. As a nice corollary we get, that long enough geodesic rays in $CV_2$ become at some point rigid. 

\subsection*{Roadmap}
\textbf{Section 1} will give the basic definitions of Outer Space and the Lipschitz metric. Although this section is enough to understand the rest of the paper I strongly recommend Vogtmanns surveys \cite{V15} or \cite{V02} to get familiar with this topic.

\textbf{Section 2} will give the notions and main properties of geodesics in Outer Space. Furthermore it contains some basic tools which we will use in the following chapters.

\textbf{Section 3} introduces envelopes and shows that they are polytopes in $CV_n$. Theorem \ref{unique geodesic segments} then shows how one can walk along edges of those polytopes to get a piecewise rigid geodesic. We will also see, that out- and in-going envelopes cover $CV_n$ and in each simplex the possible out-/in envelopes are determined by the candidate envelopes.

In \textbf{Section 4} we will first show, that an envelope has almost always maximal dimension. On the other hand in the reduced case we will always find an envelope close to each point in a face, such that the envelope decreases its dimension when it passes the face. Using this we get that isometries respect the simplicial structure of reduced Outer Space.

\section*{Acknowledgements}
I thank Armando Martino for three great weeks in Southampton, where the Appendix of this paper and its corresponding results were created and for great input and conversations, which deepened my understanding of the theory.\\
I heartly thank Gabriela Weitze-Schmithüsen for her thorough proof-reading, discussions and general help which greatly served the readability of the paper.\\
I also thankfully acknowledge the support by DFG-collaborative research center TRR 195 (Project I.8).

\section{Preliminaries}
This section will give a quick introduction into the basic definitions and properties of Culler-Vogtmann Outer Space and the Lipschitz metric on it. For a more thorough introduction and survey I refer the reader to [...].

The definition of Outer Space is analogue to the definition of Teichmüller space just in terms of graphs. More explicitly a point in Outer Space $CV_n$ consists of three data, namely a finite graph $\Gamma$ without leaves, a marking on $\Gamma$ and lengths of its edges:

\begin{defi}
\begin{enumerate}[(i)]
\item For $n \in \N$ the \emph{rose} $R_n$ is the graph with one vertex and $n$ edges, also called petals. Hence we can easily identify the fundamental group $\pi_1(R_n)$ with the free group $F_n$ by assigning each (oriented) petal a basis element of the free group.
\begin{figure}[h]
\center
\begin{tikzpicture}[scale=0.6, decoration={
    markings,
    mark=at position 0.5 with {\arrow{>}}}]
\def\n{9}
\begin{polaraxis}[grid=none, axis lines=none, clip=false]
\pgfplotsinvokeforeach{1,...,9}{
     \addplot[mark=none,domain=(#1-0.5)*(360/\n):(#1+0.5)*(360/\n),samples=300/\n,gray,
      postaction=decorate] { abs(cos(\n*x/2))};
      \node[gray] at (40+360/9*#1:110){$x_#1$};
}
\filldraw[gray] (0,0) circle[radius=1pt];
\end{polaraxis}
\end{tikzpicture}
\caption{The rose graph $R_n$ with labeled petals}
\end{figure}

\item Let $\Gamma$ be a finite graph, where each vertex has at least valency 3. A \emph{marking on $\Gamma$} is a homotopy equivalence $m: R_n \to \Gamma$. Hence we have now identified $F_n$ with the fundamental group $\pi_1(\Gamma)$. 
\item The \emph{(projectivized) Outer Space of rank $n$} is defined as the set
\begin{align*}
CV_n:=\{ (\Gamma, l, m) \ | \ &\Gamma \text{ is a finite graph with all vertices have at least valency 3}\\
& \text{ with edge lengths } l: E(\Gamma) \to \R_{>0} \text{ , i.e. } (\Gamma, l) \text{ is a metric graph}\\
& m: R_n \to \Gamma \text{ is a marking} \} /\sim
\end{align*}
Where we have the following equivalence relation: $(\Gamma, l, m) \sim  (\Gamma', l', m')$ if there exists a homothety $h: (\Gamma, l) \to (\Gamma', l')$ such that the induced marking $h \circ m$ is homotopic to $m'$
\item The \emph{reduced Outer Space $CV_n^{red}$} is the same set as above, but with the restriction that our graphs have no separating edges. This is a deformation retract of $CV_n$.
\end{enumerate}
\end{defi}

In other words our points in $CV_n$ can be considered as finite metric graphs without leaves and with volume $vol(\Gamma)=\sum\limits_{e \in E(\Gamma)} l(e) =1$ (by homothety we can stretch the graph) where we identify the free group $F_n$ with the fundamental group of $\Gamma$ via the marking up to free homotopy.

\begin{Not}
\begin{enumerate}[(i)]
\item From now on we will omit the length and marking in our notation and mean them implicit in $\Gamma$.
\item Quite often it is easier to write down a homotopy inverse $\tilde{h}$ of the marking as follow: Fix a spanning tree of $\Gamma$ which will be collapsed to the vertex of $R_n$ and label the rest of the edges with an orientation and a basis of $F_n$. Each edge will be sent to the sequence of petals in $R_n$ corresponding to its label (s. figure \ref{spaceship}).

\begin{figure}[h]\label{spaceship}
\center
\begin{tikzpicture}[scale=0.7, decoration={
    markings,
    mark=at position 0.5 with {\arrow{>}}}
    ] 

\draw[red,postaction=decorate] (0,2) to[out=180, in=180] node[left]{$x_2$} (0,1);
\draw[blue,postaction=decorate] (0,2) --node[above]{$x_1$} (2,2);
\draw[green,postaction=decorate] (4,2.5)--node[below]{$x_5$}(6,1.8);

\begin{scope}[yscale=-1,shift={(0,-1)}]
\draw[orange,postaction=decorate] (0,2) to[out=180, in=180] node[left]{$x_2x_3$} (0,1);
\draw[cyan,postaction=decorate] (0,2) -- node[below]{$x_1^{-1}x_4x_2$}(2,2);
\draw[purple,postaction=decorate] (4,2.5)--node[above]{$x_9$}(6,1.8);
\end{scope}

\draw[brown,postaction=decorate] (2,1) -- node[right]{$x_6$} (2,0);
\draw[pink, postaction=decorate] (6,1.8) -- node[right]{$x_6x_7$}(6,-0.8);
\draw[yellow, postaction=decorate] (4,2.5) -- node[right]{$x_6x_8$}(4,-1.5);

\draw (0,2) node{$\bullet$} -- (0,1) node{$\bullet$} -- (2,1) node{$\bullet$} -- (2,2) node{$\bullet$} -- (2,2.5)  to[out=90, in=-210] (4,2.5) node{$\bullet$} -- (4,3.2) to[out=90, in=-220] (6,1.8) node{$\bullet$} -- (6.3,1.6) to[out=-40, in=90] (7.5,0.5);

\begin{scope}[yscale=-1,shift={(0,-1)}]
\draw (0,2) node{$\bullet$} -- (0,1) node{$\bullet$} -- (2,1) node{$\bullet$} -- (2,2) node{$\bullet$} -- (2,2.5)  to[out=90, in=-210] (4,2.5) node{$\bullet$} -- (4,3.2) to[out=90, in=-220] (6,1.8) node{$\bullet$} -- (6.3,1.6) to[out=-40, in=90] (7.5,0.5);
\end{scope}

\begin{scope}[shift={(10,-3.3)}]
\def\n{9}
\begin{polaraxis}[grid=none, axis lines=none, clip=false]
\pgfplotsinvokeforeach{1,...,9}{
     \addplot[mark=none,domain=(#1-0.5)*(360/\n):(#1+0.5)*(360/\n),samples=300/\n,gray,
      postaction=decorate] { abs(cos(\n*x/2))};
      \node[gray] at (40+360/9*#1:120){$x_#1$};
}

\addplot[blue,mark=none,domain=65:95,samples=50, postaction=decorate] { abs(cos(\n*((65-x)*4/3+60)/2))*0.9};
\addplot[red,mark=none,domain=105:135,samples=50, postaction=decorate] { abs(cos(\n*((105-x)*4/3+100)/2))*0.9};
\addplot[orange,mark=none,domain=100:140,samples=50, postaction=decorate] { abs(cos(\n*x/2))+(x-100)/40*0.2};
\addplot[orange,mark=none,domain=140:180,samples=50, postaction=decorate] { abs(cos(\n*x/2))+(180-x)/40*0.2};

\addplot[cyan,mark=none,domain=95:65,samples=50, postaction=decorate] { abs(cos(\n*((65-x)*4/3+60)/2))*0.95};
\addplot[cyan,mark=none,domain=185:215,samples=50, postaction=decorate] { abs(cos(\n*((185-x)*4/3+180)/2))*0.9};
\addplot[cyan,mark=none,domain=105:135,samples=50, postaction=decorate] { abs(cos(\n*((105-x)*4/3+100)/2))*0.95};

\addplot[green,mark=none,domain=225:255,samples=50, postaction=decorate] { abs(cos(\n*((225-x)*4/3+220)/2))*0.9};
\addplot[brown,mark=none,domain=265:295,samples=50, postaction=decorate] { abs(cos(\n*((265-x)*4/3+260)/2))*0.9};

\addplot[pink,mark=none,domain=260:300,samples=50, postaction=decorate] { abs(cos(\n*x/2))+(x-260)/40*0.2};
\addplot[pink,mark=none,domain=300:340,samples=50, postaction=decorate] { abs(cos(\n*x/2))+(340-x)/40*0.2};

\addplot[yellow,mark=none,domain=265:295,samples=50, postaction=decorate] { abs(cos(\n*((265-x)*4/3+260)/2))*0.95};
\addplot[yellow,mark=none,domain=345:375,samples=50, postaction=decorate] { abs(cos(\n*((345-x)*4/3+340)/2))*0.9};

\addplot[purple,mark=none,domain=25:55,samples=50, postaction=decorate] { abs(cos(\n*((25-x)*4/3+20)/2))*0.9};

\filldraw (0,0) circle[radius=2pt];
\end{polaraxis}
\end{scope}
\end{tikzpicture}
\caption{A typical element in Outer Space and its homotopy inverse image in $R_n$}

\end{figure}

\item From now on we will identify each element of $\pi_1(\Gamma)$ with its image under the marking and vice versa.
\item Since every (conjugacy class of an) element of $\pi_1(\Gamma)$ can be uniquely realized as an immersed loop in $\Gamma$, we can assign for each triple $(\Gamma, l, m)$ and element $\gamma \in F_n$ a length of the corresponding immersed realisation in $\Gamma$. We will also denote this by $l_{\Gamma}(\gamma)$.
\end{enumerate}
\end{Not}

The data $(\Gamma,m)$ of an element $A=(\Gamma, l,m) \in CV_n$ is called the topological type of $A$. After normalizing the volume we can see, that all the points in $CV_n$ with the same topological type build an open simplex.\\
If we can pass from one marked graph $(\Gamma,m)$ to another marked graph $(\Gamma',m')$ by collapsing a forest, then we identify the open simplex of $(\Gamma',m')$ with the missing face of the simplex of $(\Gamma,m)$ were the edges of the forest have length 0.

Hence we can represent $CV_n$ as a simplicial complex with some missing faces and consider the induced topology.

Culler and Vogtmann showed in \cite{CV86} the following two important theorems:

\begin{theorem}[Culler Vogtmann 86]
$CV_n$ is contractible.
\end{theorem}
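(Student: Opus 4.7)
The plan is to follow the strategy of Culler and Vogtmann's original paper: deformation retract $CV_n$ onto a smaller subcomplex called the \emph{spine} $K_n$, which is more amenable to combinatorial analysis, and then show that $K_n$ is contractible by a discrete Morse-theoretic argument.

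First, I would define $K_n \subseteq CV_n$ as the geometric realization of the poset whose elements are the open simplices of $CV_n$, ordered by the ``is a face of'' relation. Concretely, a vertex of $K_n$ corresponds to an equal-edge-length marked graph $(\Gamma, m)$, and higher-dimensional cells of $K_n$ correspond to chains of forest-collapses. Next I would construct a deformation retraction $r \colon CV_n \to K_n$ which, on each open simplex of $CV_n$, flows points towards the barycenter while being compatible with passages to sub-simplices. The key compatibility check is that, since missing faces correspond exactly to collapses of loops (which would lower the rank of $\pi_1$), one can choose the flow so that the only edges whose length reaches zero are those whose collapse yields a graph still in $CV_n$.

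The substantive work lies in showing that $K_n$ is contractible. I would introduce a complexity function on vertices of $K_n$, for instance the pair $(|V(\Gamma)|, |E(\Gamma)|)$ with lexicographic order, and study the associated descending links. For each non-minimal vertex $v$ of $K_n$ (the unique minimum being the rose $R_n$), the descending link consists of simplices coming from marked graphs obtained by non-trivial collapses of $\Gamma$. I would argue that this link is contractible by selecting a maximal forest $F \subseteq \Gamma$ and verifying that collapsing along $F$ serves as a cone point: every descending simplex at $v$ can be connected to this collapse via intermediate collapses that stay within the descending link. Combinatorial Morse theory then yields that $K_n$ deformation retracts onto the rose vertex, so $K_n$, and hence $CV_n$, is contractible.

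The main obstacle is the contractibility of the descending links. The subtlety is that not every sub-forest of $\Gamma$ collapses to a graph in $CV_n$ (loops must be excluded), so the combinatorics of which collapses remain in the descending link depends sensitively on the chosen Morse function. A clean treatment requires tracking how Whitehead-type moves interact with the forest-collapse structure and organising the collapses so that the cone point is well defined; verifying this is essentially the technical heart of Culler and Vogtmann's original argument.
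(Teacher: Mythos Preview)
The paper does not prove this theorem; it is simply stated with attribution to \cite{CV86} as background. So there is no ``paper's own proof'' to compare against.

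That said, your sketch has a genuine gap. Your proposed Morse function $(|V(\Gamma)|,|E(\Gamma)|)$ on vertices of $K_n$ cannot work, for two reasons. First, since every graph in $CV_n$ has Euler characteristic $1-n$, we have $|E(\Gamma)|=|V(\Gamma)|+n-1$, so the pair carries no more information than $|V(\Gamma)|$ alone. Second, and more seriously, the minimum value is attained not at a single vertex but at \emph{every} marked rose, and there are infinitely many of these (one for each $\Out(F_n)$-translate of the standard marking). So there is no unique cone point to retract onto, and the descending-link argument as you describe it collapses. Relatedly, your remark that ``not every sub-forest of $\Gamma$ collapses to a graph in $CV_n$'' is off: collapsing a forest never kills a loop, so every forest collapse stays in $CV_n$; the combinatorics of the spine are simpler than you suggest, but the global argument is harder.

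Culler and Vogtmann's actual proof does retract onto the spine, but the contractibility of $K_n$ is shown by fixing one marked rose $\rho$ as a basepoint and defining a norm on marked graphs measuring, roughly, the total length of the images of a fixed generating set. They then prove that the sublevel sets of this norm are contractible by a delicate inductive argument (the ``peak reduction'' style analysis of Whitehead moves and star links). The choice of basepoint is exactly what breaks the symmetry among the infinitely many roses and makes a Morse-type filtration possible; your purely combinatorial complexity function cannot see the marking and therefore cannot do this.
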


On $CV_n$ we have a natural right action of $\Aut(F_n)$ by change of marking, i.e. each element $\phi$ can be realised as a homotopy equivalence $\phi' : R_n \to R_n$ and precomposing to the marking yields the action on $CV_n$: $(\Gamma, l, m) \bullet \phi := (\Gamma, l, m \circ \phi')$. Furthermore inner automorphisms act trivially on $CV_n$ since the marking is only defined up to homotopy, hence we actually have an action of $\Out(F_n):=\Aut(F_n)/\Inn(F_n)$.

\begin{theorem}[Culler Vogtmann 86]
The $\Out(F_n)$-action on $CV_n$ is fix-point free and each point has a finite stabilizer.
\end{theorem}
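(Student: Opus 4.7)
The plan is to unpack the equivalence relation defining $CV_n$ and reduce the stabilizer computation to a finiteness statement about isometry groups of finite metric graphs. Fix $A = (\Gamma, l, m) \in CV_n$ and $\phi \in \Out(F_n)$ represented by some homotopy equivalence $\phi': R_n \to R_n$. The identity $A \bullet \phi = A$ unfolds to the existence of a homothety $h: (\Gamma, l) \to (\Gamma, l)$ with $h \circ m \circ \phi' \simeq m$. Because both copies of $\Gamma$ carry the same volume-$1$ metric, the homothety factor is forced to be $1$, so $h$ is an actual isometry of $(\Gamma, l)$.

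Next I would show that $\phi \mapsto h$ descends to a well-defined injection $\operatorname{Stab}(A) \hookrightarrow \Isom(\Gamma)/H$, where $H \leq \Isom(\Gamma)$ consists of those self-isometries which are homotopic to the identity. Well-definedness: if $h_1, h_2$ both satisfy the relation for the same $\phi$, then $h_1 \circ m \simeq h_2 \circ m$, and post-composing with a homotopy inverse of $m$ forces $h_1 \simeq h_2$. Injectivity: if the class of $h$ is trivial, then $h \simeq \operatorname{id}_\Gamma$, so the defining relation reduces to $m \simeq m \circ \phi'$; composing with a homotopy inverse of $m$ yields $\phi' \simeq \operatorname{id}_{R_n}$ as free maps, whence $\phi$ is trivial in $\Out(F_n)$.

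Finally, I would bound $\Isom(\Gamma)$: since $\Gamma$ is a finite graph in which every vertex has valence at least $3$, any isometry is determined by a length- and incidence-preserving permutation of the finite edge set, of which there are only finitely many. Hence $\operatorname{Stab}(A)$ embeds into a finite group and is itself finite. Reading ``fix-point free'' as the absence of a \emph{global} fixed point, this is also immediate: if some $A$ were fixed by all of $\Out(F_n)$, then $\Out(F_n)$ would inject into the finite group $\Isom(\Gamma)/H$, contradicting the infinitude of $\Out(F_n)$ for $n \geq 2$. The main obstacle is the injectivity step: it relies on the uniqueness up to parametrization of the immersed representative of each free conjugacy class of $F_n$ in $\Gamma$ (which uses the ``no leaves, valence $\geq 3$'' hypothesis), together with the fact that $m$ is a genuine homotopy equivalence so that composing with $m^{-1}$ is legitimate up to free homotopy.
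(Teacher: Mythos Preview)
The paper does not actually prove this theorem: it is stated as a citation of Culler and Vogtmann's original 1986 result and no argument is given in the text. So there is no proof in the paper to compare your attempt against.

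That said, your argument is essentially the standard one and is correct in outline. Two small remarks. First, you only establish that $\phi \mapsto [h]$ is an injective map of \emph{sets} into $\Isom(\Gamma)/H$; you do not check it is a group homomorphism. This is harmless for the finiteness conclusion, but ``embeds'' suggests more than you have shown. Second, the phrasing ``fix-point free'' in the paper's statement is unfortunate: the action of $\Out(F_n)$ on $CV_n$ is certainly \emph{not} free (many points have nontrivial finite stabilizers coming from graph symmetries), so the literal reading ``no nontrivial element fixes any point'' is false. Your interpretation as ``no global fixed point'' is the only one that makes the statement coherent alongside ``each point has a finite stabilizer'', and your deduction of it from finiteness of stabilizers plus infinitude of $\Out(F_n)$ is fine.
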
  

Similarly to the Thurston metric on Teichmüller space as introduced in \cite{T86} Armando Martino and Stefano Francaviglia introduced in \cite{FM11} an asymmetric metric on $CV_n$ as following:

\begin{defi}\label{defi:Lipschitz-distance}
Let $A:=(\Gamma, l, m),B:=(\Gamma', l', m') \in CV_n$. Then consider the set $S$ of all continuous maps $h: (\Gamma,l) \to (\Gamma',l')$, s.t. $m \circ h \cong m'$, i.e. the following diagramm commutes up to homotopy:
\[
\begin{tikzcd}
R_n \arrow[r, "m"] \arrow[rd, "m \prime "] & \Gamma \arrow[d, "h"]\\
& \Gamma \prime
\end{tikzcd}
\]

Since finite metric graphs are compact, each $h \in S$ is Lipschitz continuous with Lipschitz-constant $L(h)$. The \emph{Lipschitz distance} from $A$ to $B$ is then defined as
\begin{align*}
d_R(A,B)&:= \log ( \frac{\vol(\Gamma)}{\vol(\Gamma')} \cdot \Lambda_R(A,B) )\\
\text{ with } \quad \Lambda_R(A,B)&:=\inf_{h \in S} L(h)
\end{align*}
The typical way to gain now the symmetric Lipschitz distance is by:
\[d(A,B):=d_R(A,B)+d_R(B,A)\]
\end{defi}

The Arzela-Ascoli theorem yields, that the infimum $\Lambda_R(A,B)$ is actually attained by a map $h \in S$. There is even an easier and more explicit way to calculate this distance by looking at the maximal stretching of certain paths in the graph.

\begin{defi}
For a given graph $\Gamma$ a \emph{candidate} is a simple loop in $\Gamma$ whose image is a topological embeddeding of one of the following graphs (s. figure \ref{fig:candidates}):
\begin{itemize}
\item a simple loop
\item a figure of eight
\item or a barbell
\end{itemize}
\begin{figure}[h]\label{fig:candidates}
\centering
\begin{tikzpicture}
\draw (0,0) arc (180:-180:1/2);
\draw[fill] circle [radius=0.05];
\begin{scope}[shift={(3.5,0)}]
\draw (0,0) arc (180:-180:1/2);
\draw (0,0) arc (0:360:1/2);
\draw[fill] circle [radius=0.05];
\begin{scope}[shift={(3.5,0)}]
\draw (0,0) arc (0:360:1/2);
\draw[fill] circle [radius=0.05];
\draw (0,0) -- (1,0);
\draw (1,0) arc (180:-180:1/2);
\draw[fill] (1,0) circle [radius=0.05];
\end{scope}
\end{scope}
\end{tikzpicture}
\caption{A simple loop, a figure of eight and a barbell}
\end{figure}
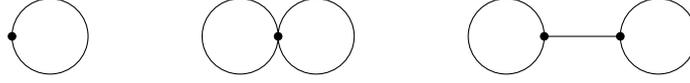

We will denote the set of candidates of $\Gamma$ with $\cand(\Gamma)$ and identify them via the marking as the corresponding subset (of conjugacy classes) in $F_n$.
\end{defi}

Now a theorem by Francaviglia and Martino (s. \cite{FM11}) states, that the minimal Lipschitz constant can be calculated as the maximal stretching of these candidates:

\begin{theorem}\label{1 candidate is enough}
For $A, B \in CV_n$ we have:
\[\Lambda_R(A,B) = \sup_{\alpha \in F_n} \frac{l_B(\alpha)}{l_A(\alpha)} = \max_{\alpha \in \cand(A)} \frac{l_B(\alpha)}{l_A(\alpha)}\]
\end{theorem}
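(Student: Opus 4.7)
The plan is to establish the chain
$$\max_{\alpha \in \cand(A)} \frac{l_B(\alpha)}{l_A(\alpha)} \;\leq\; \sup_{\alpha \in F_n} \frac{l_B(\alpha)}{l_A(\alpha)} \;\leq\; \Lambda_R(A,B) \;\leq\; \max_{\alpha \in \cand(A)} \frac{l_B(\alpha)}{l_A(\alpha)}.$$
The first inequality is immediate since $\cand(A)$ is a subset of the conjugacy classes of $F_n$. For the second, I fix any $h \in S$ and any $\alpha \in F_n$, and let $\tilde\alpha$ denote its immersed realization in $\Gamma$, of length $l_A(\alpha)$. Because $h \circ m \simeq m'$, the composed loop $h \circ \tilde\alpha$ in $\Gamma'$ is freely homotopic to the immersed realization of $\alpha$ via $m'$, so its length is at least $l_B(\alpha)$; on the other hand, being the image of a length-$l_A(\alpha)$ curve under an $L(h)$-Lipschitz map, its length is at most $L(h)\, l_A(\alpha)$. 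Dividing and then taking supremum in $\alpha$ and infimum in $h$ yields the middle inequality.

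The third inequality is the substantive one and requires producing some $h_0 \in S$ with $L(h_0) \leq L^* := \max_{\alpha \in \cand(A)} l_B(\alpha)/l_A(\alpha)$. I would restrict attention to \emph{straight} maps — those sending each edge of $\Gamma$ at constant speed onto the tight (reduced) path in $\Gamma'$ dictated by the markings. For straight maps the Lipschitz constant equals the maximum over edges of the ratio image-length over edge-length, which depends continuously on the finitely many edge-length parameters; hence a straight minimizer $h_0$ with Lipschitz constant $L_0$ exists. By construction $\Lambda_R(A,B) \leq L_0$, so it suffices to find a candidate $\alpha \in \cand(A)$ with $l_B(\alpha)/l_A(\alpha) = L_0$, as then $L_0 \leq L^*$ closes the inequality chain.

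To produce such an $\alpha$, I would study the \emph{tension subgraph} $\Delta \subseteq \Gamma$ consisting of the edges stretched exactly by $L_0$ under $h_0$. At each vertex $v \in \Delta$ I partition the incident edge-germs into \emph{gates}: two germs belong to the same gate iff $h_0$ sends them in the same initial direction at $h_0(v)$. The crucial lemma is that every vertex of $\Delta$ has at least two gates. Otherwise, one could perturb $h_0$ near a one-gated vertex — sliding $v$ slightly in the shared outgoing direction — to strictly decrease the stretch on every incident tension edge while keeping non-tension edges below $L_0$ by continuity, contradicting the optimality of $L_0$. Once the two-gate property holds, an infinite \emph{legal} walk in $\Delta$ (one that always crosses from one gate to a different gate) exists and is eventually periodic by finiteness, producing a legal loop $\beta \subseteq \Delta$. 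The remaining, and most delicate, step is to prune $\beta$ via its legal-turn structure together with an Euler-characteristic case analysis on its image subgraph to extract a loop whose topological image is a simple loop, a figure of eight, or a barbell — that is, a candidate. Every edge along such a candidate is stretched by exactly $L_0$, and its $h_0$-image is immersed (by legality), hence $l_B(\alpha) = L_0 \cdot l_A(\alpha)$, closing the chain. The main obstacles will be orchestrating the one-gate perturbation without increasing stretch elsewhere, and verifying that the legal loop can be chosen of candidate topological type.
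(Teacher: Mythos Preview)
The paper does not give its own proof of this result; it is quoted from Francaviglia--Martino \cite{FM11}, and your outline is precisely their argument (optimal straight map, tension graph, gates, legal loop, reduction to a candidate). So strategically you are on target.

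There is, however, a genuine logical slip in your two-gate step. Sliding $h_0(v)$ in the common outgoing direction does strictly decrease the stretch on the tension edges incident to $v$ and (by continuity) keeps non-tension edges below $L_0$; but it does nothing to the tension edges of $\Delta$ that are \emph{not} incident to $v$, which keep stretch exactly $L_0$. Hence the perturbed map is still optimal, and you have \emph{not} contradicted ``the optimality of $L_0$'' as you claim. What the perturbation actually contradicts is minimality of the tension graph among optimal maps. The standard repair is a secondary minimisation: among all optimal straight maps (a compact set, since such maps are parametrised by the images of the vertices in $(\Gamma')^{V(\Gamma)}$ and $L$ is continuous), choose $h_0$ to further minimise, say, the total length or the number of edges of $\Delta$; the one-gate perturbation would then produce another optimal map with strictly smaller $\Delta$, which is the desired contradiction. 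Your closing caveat about ``orchestrating the one-gate perturbation'' is exactly this point, but the argument as written needs the secondary minimisation, not a direct appeal to $L_0$.
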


Since there are only finitely many candidates in a graph, we can now easily compute $\Lambda_R(A,B)$. A computational realization of this theorem with Sage (\cite{sagemath}) can be found under \cite{CVcalc}.

\begin{defi}
We say $\gamma \in F_n$ is a \emph{witness} for $A$ to $B$ for the asymmetric metric, if $\gamma$ is maximally stretched from $A$ to $B$, i.e. $\Lambda_R(A,B)=\frac{l_B(\gamma)}{l_A(\gamma)}$. We denote the set of witnesses from $A$ to $B$ as $W_R(A,B)$.

We say $(\gamma, \omega) \in F_n\times F_n$ is a \emph{(symmetric) witness} for $A$ and $B$ for the symmetric metric, if $\gamma$ is an asymmetric witness from $A$ to $B$ and $\omega$ is an asymmetric witness from $B$ to $A$. We denote the set of symmetric witnesses for $A$ and $B$ as $W(A,B)$.
\end{defi}

We may think of a witness as the conjugacy class of a simple element, as taking conjugates or powers/roots doesn't change the metric behaviour. Most of the time the witnesses we consider will be candidates of $A$. Such witnesses will be called \emph{candidate witnesses}. We will denote the set of candidate witnesses by $CW(A,B):=\cand(A)\cap W_R(A,B)$. We will see, that they play a crucial role in the study of geodesics.

\section{Geodesics in Outer Space}
A first step to understand a metric space is often to understand its (minimizing) geodesics. For example they played the crucial role in the understanding of the isometry group of Outer Space in \cite{FM12}. The philosophy of this paper is, that a lot of information, e.g. the simplicial structure of $CV_n$, is encoded in the geodesics. In this section some basic properties of the geodesics in Outer Space are listed.

As in (symmetric) metric spaces we define length and geodesics in asymmetric metric spaces:
\begin{defi} \label{2 def geodesic}
Let $(X,d)$ be a space with an (asymmetric) metric $d$, $I \subseteq \R$ an interval and $g: I \to X$ a continuous path.
\begin{enumerate}[(i)]
\item Let $J:=[a,b] \subseteq I$ be a closed interval. Then the length of the arc $g|_J$ is defined as
\[l(g|_J):= \sup \{\sum_{i=1}^N d(g(t_{i-1}),g(t_i))   \ | \ N \in \N, a=:t_0 \leq t_1 \leq \dots \leq t_N:=b\}.\]
We call $g$ \emph{rectifiable} iff the arc length is finite for every arc of $g$.
\item A rectifiable curve $g$ is called a \emph{minimizing geodesic} iff for every arc $J:=[a,b] \subseteq I$ we have $l(g|_J)=d(g(a),g(b))$.
\end{enumerate}
\end{defi}

\danger Keep in mind, that these definitions depend on the orientation of $g$, since we can have $d(g(t_1),g(t_2)) \neq d(g(t_2),g(t_1))$. Especially if $g$ is a $d_R$-geodesic we can still have that $\overline{g}(t):=g(-t)$ is not a $d_R$-geodesic!

Another way to define minimizing geodesics is as follows

\begin{Lem}\label{2 lemma triangle ineq and geodesics}
Let $(X,d)$ and $g$ be as in definition \ref{2 def geodesic}.\\
Then we have that $g$ is a minimizing geodesic if and only if it realises the triangle equality
\[d(g(a),g(c))=d(g(a), g(b)) + d(g(b), g(c))\]
for all $a,b,c \in I$ with $a \leq b \leq c$.
\end{Lem}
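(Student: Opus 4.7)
The plan is to prove both directions by exploiting the additivity of arc length over subdivisions of the parameter interval, which works identically in the asymmetric setting since the triangle inequality $d(x,z) \le d(x,y)+d(y,z)$ still holds and the length in Definition \ref{2 def geodesic} is defined over \emph{ordered} partitions.

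For the forward direction, assume $g$ is a minimizing geodesic and fix $a\le b\le c$ in $I$. Then $l(g|_{[a,c]}) = d(g(a),g(c))$, $l(g|_{[a,b]}) = d(g(a),g(b))$, and $l(g|_{[b,c]}) = d(g(b),g(c))$ all hold by assumption. I would then invoke additivity of length, $l(g|_{[a,c]}) = l(g|_{[a,b]}) + l(g|_{[b,c]})$, to immediately conclude $d(g(a),g(c)) = d(g(a),g(b)) + d(g(b),g(c))$.

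For the converse, assume the triangle equality holds for all $a\le b\le c$. Pick an arbitrary subinterval $[a,b]\subseteq I$ and any partition $a=t_0\le t_1\le\cdots\le t_N=b$. By iterating the triangle equality on consecutive triples, I get $d(g(a),g(b))=\sum_{i=1}^{N} d(g(t_{i-1}),g(t_i))$ for \emph{every} partition. Hence the defining supremum equals $d(g(a),g(b))$, so $l(g|_{[a,b]})=d(g(a),g(b))$, proving $g$ is a minimizing geodesic.

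The main subtle point is justifying the additivity $l(g|_{[a,c]}) = l(g|_{[a,b]}) + l(g|_{[b,c]})$ used in the forward direction. The inequality $\ge$ is immediate since a partition of $[a,b]$ together with a partition of $[b,c]$ yields a partition of $[a,c]$ whose sum is the sum of the two. For $\le$, given any partition of $[a,c]$, insert $b$ to produce a refinement; the triangle inequality ensures the refined partition sum is no smaller, and splitting at $b$ bounds it by $l(g|_{[a,b]}) + l(g|_{[b,c]})$. This is the only place where care is needed, but the argument is standard and does not rely on symmetry of $d$.
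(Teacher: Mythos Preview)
Your proof is correct, and the backward direction is essentially identical to the paper's. For the forward direction, however, the paper takes a shorter route: it simply chains
\[
d(g(a),g(c)) \;\le\; d(g(a),g(b)) + d(g(b),g(c)) \;\le\; l(g|_{[a,c]}) \;=\; d(g(a),g(c)),
\]
where the first inequality is the triangle inequality, the second uses only that $\{a,b,c\}$ is one particular partition of $[a,c]$ (hence its sum is bounded by the supremum defining the length), and the last equality is the minimizing-geodesic hypothesis. Your argument instead establishes full additivity of arc length over concatenation and then combines the three length equalities. This is perfectly valid and you justify the additivity carefully, but it is more machinery than the situation requires; the single-partition lower bound already suffices.
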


\begin{Bew}
Let $a \leq b \leq c \in I$
\glqq $\Rightarrow$ \grqq: By the triangle inequality of the metric we have
\begin{align*}
d(g(a),g(c)) &\leq d(g(a), g(b)) + d(g(b), g(c))\\
&\leq l(g|_{[a,c]})=d(g(a),g(c))
\end{align*}
where the last inequality is the definition of length and the last equality is the definition of minimizing geodesic. Hence equality holds.

\glqq $\Leftarrow$ \grqq: Let $a:=t_0 \leq \dots \leq t_N:=b$ be any subdivision of $[a,b]$, then iteratively applying the triangle equality yields
\[\sum_{i=1}^N d(g(t_{i-1}),g(t_i))= d(g(t_0),g(t_2))+\sum_{i=3}^N d(g(t_{i-1}),g(t_i))= \dots = d(g(t_0),g(t_N))\]
and hence $l(g|_{[a,b]})=d(g(a),g(b))$
\end{Bew}

\begin{Bem}
In other metric spaces sometimes the definition of a geodesics differ, namely one additionally requires a geodesic $g$ to be parametrized by length, i.e. $l(g|_{[a,b]})=b-a$ and relaxes the conditions of definition \ref{2 def geodesic} (ii) to be satisfied only locally. Lemma \ref{glueing geodesics} will tell us, how minimizing geodesics in Outer Space are glued together to get such geodesics (i.e. locally minimizing geodesics). Hence we will restrict to study minimizing geodesics. Another good reason to stick to minimizing geodesics instead of locally minimizing geodesics
is that being a locally minimizing geodesic in Outer Space is a relatively weak condition, as you can see for example in remark \ref{2 fun stuff with locally-minimizing geodesics}. 
\end{Bem}

\begin{Not}
From now on we denote by \emph{geodesic} a minimizing geodesic in regards of the asymmetric metric $d_R$ and by \emph{symmetric geodesic} a minimizing geodesic in regards of the symmetric metric $d$ in $CV_n$.
\end{Not}

The notion of symmetric geodesic is not only because it is a geodesic in the symmetric metric, but also because these are exactly those asymmetric geodesics which are still geodesic if you flip the direction:

\begin{Bem}
A continuous path $g: I \to CV_n$ is a symmetric geodesic if and only if $g$ and $\overline{g}$ are asymmetric geodesics, where $\overline{g}(t):=g(-t)$ denotes $g$ with a flipped orientation. This can easily be seen by applying the previous lemma \ref{2 lemma triangle ineq and geodesics}.
\end{Bem}

It is always an important question about a metric space, if there exists a geodesic joining two points, i.e. if the metric space is geodesic. In \cite[theorem 5.5]{FM11} Francaviglia and Martino proved by \glqq folding the edges\grqq \ according to an optimal Lipschitz map the following theorem

\begin{theorem}
For any two points $A,B \in CV_n$ there exists a geodesic from $A$ to $B$.
\end{theorem}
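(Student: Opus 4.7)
The plan is to construct an explicit continuous path $g \colon [0,1] \to CV_n$ from $A$ to $B$ and verify that it realises the triangle equality of Lemma \ref{2 lemma triangle ineq and geodesics}. The natural starting point is an optimal Lipschitz map: by the Arzela-Ascoli argument noted after Definition \ref{defi:Lipschitz-distance}, the infimum $\Lambda_R(A,B)$ is attained by some $h \in S$. Up to homotopy I may assume $h$ is linear on each edge of a suitable subdivision of $A$, so that the \emph{tension subgraph} $\Delta \subseteq A$ consisting of edges stretched by exactly $\Lambda_R(A,B)$ is well-defined, and by Theorem \ref{1 candidate is enough} it supports a candidate $\gamma \in \cand(A)$.

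The central idea is to deform $A$ towards $B$ by \emph{folding along $h$}. Whenever two initial sub-arcs of tension edges emanating from a common vertex are sent by $h$ to the same path in $B$, I would identify them gradually, using a parameter $\tau$ controlling the length of the identified sub-arcs, and simultaneously rescale the remaining edges so the graph has volume $1$. This yields a family $A_\tau$ and a family of induced Lipschitz maps $h_\tau \colon A_\tau \to B$ whose Lipschitz constant decreases linearly in $\tau$ while $\gamma$ remains maximally stretched. Running $\tau$ until either the whole graph becomes tension (so I only need to shrink uniformly to reach $B$) or a non-tension edge collapses to zero length produces a first segment of the desired path. In the latter case the topological type changes to an adjacent simplex, and I repeat the folding construction in the new simplex with the restricted map; since the combinatorial type has strictly fewer edges not in the tension graph, this process terminates after finitely many steps.

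To verify $g$ is a geodesic I would check, for each $0 \leq s \leq t \leq 1$, that the composition $A_s \xrightarrow{} A_t \xrightarrow{} B$ of the pieced-together folding maps has Lipschitz constant equal to the product of the two factors, with the candidate image of $\gamma$ witnessing maximal stretch at every stage. Combined with the multiplicative behaviour of the volume factors in $d_R$, this gives $d_R(A_s, A_t) + d_R(A_t, B) = d_R(A_s, B)$ and similarly with $A$ in place of $A_s$, so Lemma \ref{2 lemma triangle ineq and geodesics} applies.

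The main obstacle is making the folding construction rigorous without leaving $CV_n$: one must check that no vertex drops below valence $3$ during a folding (dealing with vertices that would have to be split or merged), and that the rescaling needed to keep $\vol = 1$ is compatible with the continuity of $\tau \mapsto A_\tau$ at the transitions where the topological type changes. A secondary technical difficulty is arguing that one can always find a folding direction; this uses the fact that if no two tension edges are identified at any vertex, then $h$ restricted to $\Delta$ is a local immersion, which together with the candidate in $\Delta$ forces $h$ to already realise the optimal constant with a simple scaling, in which case the straight-line path in the given simplex finishes the argument.
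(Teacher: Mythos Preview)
Your proposal follows precisely the approach the paper attributes to Francaviglia--Martino \cite{FM11}: the paper does not give its own proof but cites \cite[Theorem 5.5]{FM11}, summarising it as ``folding the edges according to an optimal Lipschitz map'', which is exactly the construction you sketch. Your outline is faithful to that argument, and the technical obstacles you flag (keeping valence at least $3$, continuity at simplex transitions, existence of a fold direction when the tension graph does not yet locally immerse) are the points handled carefully in \cite{FM11}; one small correction is that the termination measure is not the number of non-tension edges but rather the number of ``illegal turns'' (or an analogous combinatorial quantity), since folding can introduce new vertices and edges.
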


Also mentioned in the same paper we have a few remarks:

\begin{Bem}\label{c2_geodesics_in_CVn}
\begin{enumerate}[(i)]
\item There does not always exist a symmetric geodesic between two points (s. the following example).
\item On the other hand straight lines in the simplices are symmetric geodesics.
\item Geodesics are almost never unique, in fact for each point $A \in CV_n$ there are only finitely many rigid geodesics emanating from $A$. Here we call a geodesic rigid, if any sub-arc is the unique geodesic joining its endpoints. Keep in mind that a geodesic between two points is rigid if and only if it is the unique geodesic joining its endpoints.
\end{enumerate}
\end{Bem}

Together with Armando Martino we worked out the following extreme example for the last remark:

\begin{Bsp}\label{non_geodesic_ball_dim2}
Let $X \in CV_2^{red}$ be a point in a 1-dimensional simplex, i.e. a figure of eight graph, and $U \subseteq CV_2^{red}$ a neighbourhood of $X$. Then there exist points $A,C \in U$ such that there is no symmetric geodesic between $A$ and $C$.
\begin{figure}[h]
\centering
\begin{tikzpicture}
\draw (0,-1.5) -- (0,1.5);
\draw[fill] (0,0) circle[radius=0.025] node[right]{X};
\begin{scope}[yscale=1.3]
\draw[dashed, blue] (0,0) circle[radius=1];
\node[blue] at (0.7, 0.7)[above right] {U};
\end{scope}
\draw[fill] (-0.76,0.3) circle[radius=.025] node[left]{A};
\draw[fill] (0.76,-0.3) circle[radius=0.025] node[right]{C};
\begin{scope}[decoration={
    markings,
    mark=at position 0.5 with {\arrow{>}}}
    ] 
\draw[postaction=decorate, red] (-0.76,0.3) -- (0,-0.3) circle[radius=0.025];
\draw[postaction=decorate, red] (0,-0.3) -- (0.76,-0.3);
\draw[postaction=decorate, green] (0.76,-0.3) -- (0,0.3) circle[radius=0.025];
\draw[postaction=decorate, green] (0,0.3) -- (-0.76,0.3);
\end{scope}
\end{tikzpicture}
\caption{$A,C,X$ and $U$ in $CV_2^{red}$ with asymmetric geodesics}
\end{figure}
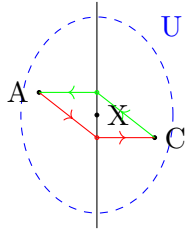
The reason this works is the following idea:

Each geodesic from $A$ to $C$ must intersect the face of $X$ at at least one point $B$ and each geodesic from $C$ to $A$ must pass through the face of $X$ at some point $B'$.
If there exists a symmetric geodesic, then $B$ and $B'$ must coincide. But we can choose $A$ and $C$ in such a manner, that all possible intersection points $B$ and $B'$ are disjoint.

You can find the exact calculations for this example in the appendix as remark \ref{bem:app_calc_non_geodesic}. Interesting is here the fact, that this highly depends on the topological type of $A$ and $C$. For example if we choose $A$ or $C$ to be a barbell graph, then this doesn't work since each barbell graph has a symmetric geodesic to all adjacent theta-graphs. You can find this statement also in the appendix as remark \ref{bem:app_geod_join_barb_theta}. The proof will use lemma \ref{keep witness}.
\end{Bsp}

Using the fact that geodesics are sent to geodesics under isometries remark \ref{c2_geodesics_in_CVn} (ii) and example \ref{non_geodesic_ball_dim2} imply the following corollary:

\begin{Kor}
Let $h \in \Isom(CV_2^{red})$, then $h$ is simplicial, i.e. $h$ maps simplices to simplices and faces to faces.
\end{Kor}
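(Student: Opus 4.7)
The plan is to establish a purely metric characterization of the simplicial stratification of $CV_2^{red}$ and then observe that this characterization must be preserved by any isometry. Recall that in $CV_2^{red}$ (where separating edges are forbidden) the only topological types are theta graphs, giving open $2$-simplices, and figures of eight, giving open $1$-simplices; the $0$-dimensional faces are missing, since collapsing either loop would change the rank of the fundamental group.

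The key characterization is the following: a point $X \in CV_2^{red}$ lies in the interior of a $2$-simplex if and only if $X$ admits a neighborhood $U$ such that every pair $A, C \in U$ is joined by a symmetric geodesic. The first direction follows from Remark \ref{c2_geodesics_in_CVn} (ii): if $X$ lies in an open $2$-simplex $\sigma$, pick $U \subseteq \sigma$; then any straight-line segment in $\sigma$ between two points of $U$ is a symmetric geodesic. The converse is precisely the content of Example \ref{non_geodesic_ball_dim2}: whenever $X$ lies on a $1$-dimensional face (a figure-of-eight simplex), every neighborhood $U$ of $X$ contains a pair $A,C$ with no symmetric geodesic between them.

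From this the corollary follows quickly. Any $h \in \Isom(CV_2^{red})$ preserves the symmetric metric $d$, hence the collection of symmetric geodesics (a symmetric geodesic is characterized metrically as a minimizing path in both orientations). Moreover, any asymmetric isometry is also a symmetric one since $d(A,B)=d_R(A,B)+d_R(B,A)$, so the argument covers all three cases. Consequently $h$ preserves the characterization above, sending interior points of $2$-simplices to interior points of $2$-simplices and face points to face points. The interior points form an open subset of $CV_2^{red}$ whose connected components are exactly the open $2$-simplices, so $h$ permutes those components. By continuity of $h$ and $h^{-1}$ the closures in $CV_2^{red}$ (each consisting of an open $2$-simplex together with its $1$-dimensional faces) are permuted as well, giving the simplicial conclusion.

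The main obstacle I expect is ensuring that Example \ref{non_geodesic_ball_dim2} applies to \emph{every} point $X$ on \emph{every} $1$-dimensional face of $CV_2^{red}$, not just to the particular figure-of-eight configuration depicted there. This is handled either by observing that $\Out(F_2) \cong \PGL(2,\Z)$ acts on $CV_2^{red}$ by isometries and transitively on the $1$-dimensional simplices, reducing the general case to the model situation, or by verifying directly from the appendix that the construction of the bad pair $(A,C)$ is purely local around $X$ and therefore goes through for any figure-of-eight face point.
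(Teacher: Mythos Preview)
Your proposal is correct and follows essentially the same route as the paper: the paper simply states that Remark~\ref{c2_geodesics_in_CVn}(ii) together with Example~\ref{non_geodesic_ball_dim2} and the fact that isometries send geodesics to geodesics imply the corollary, and your write-up spells out precisely this argument (the metric characterization of interior points via local existence of symmetric geodesics) with the natural topological finish via connected components. Your added remark that the construction in Example~\ref{non_geodesic_ball_dim2} applies at every figure-of-eight point---either by the local nature of the appendix computation or by transitivity of $\Out(F_2)$ on the $1$-simplices---is a legitimate clarification the paper leaves implicit.
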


The non-triviality of this statement comes from the fact, that $CV_2 ^{red}$ is homoeomorphic to the plane, hence topologically we can't determine faces. We will later see a proof of this statement for $CV_n^{red}$ for arbitrary $n \geq 2$ in theorem \ref{4 distinguish faces in CVn}.


An important fact about geodesics in Outer Space is, that a lot of information is already stored in witnesses. The first fact is, that we won't loose witnesses along a geodesic as shown in the following lemma:

\begin{Lem}\label{keep witness}
Let $g: I \to CV_n$ be a geodesic from $A$ to $B$, $\gamma \in F_n, t \in I$ and $C=g(t)$.\\
Then $\gamma \in F_n$ is a witness from $A$ to $B \iff \gamma$ is a witness from $A$ to $C$ and from $C$ to $B$.\\
The same holds for the symmetric case.
\end{Lem}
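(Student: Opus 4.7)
The plan is to reduce everything to a single multiplicative identity for $\Lambda_R$ along the geodesic and then chase equalities. Since all points of $CV_n$ are volume-normalized, $d_R(X,Y)=\log\Lambda_R(X,Y)$, so the geodesic condition $d_R(A,B)=d_R(A,C)+d_R(C,B)$ (from Lemma \ref{2 lemma triangle ineq and geodesics}) becomes
\[
\Lambda_R(A,B)=\Lambda_R(A,C)\cdot\Lambda_R(C,B).
\]
I will combine this with the obvious telescoping for lengths,
\[
\frac{l_B(\gamma)}{l_A(\gamma)}=\frac{l_B(\gamma)}{l_C(\gamma)}\cdot\frac{l_C(\gamma)}{l_A(\gamma)},
\]
and the universal bound $\Lambda_R(X,Y)\geq l_Y(\gamma)/l_X(\gamma)$ supplied by Theorem \ref{1 candidate is enough}.

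For the direction ``$\Rightarrow$'' assume $\gamma$ is a witness from $A$ to $B$. Then
\[
\Lambda_R(A,B)=\frac{l_B(\gamma)}{l_A(\gamma)}=\frac{l_B(\gamma)}{l_C(\gamma)}\cdot\frac{l_C(\gamma)}{l_A(\gamma)}\leq \Lambda_R(C,B)\cdot\Lambda_R(A,C)=\Lambda_R(A,B).
\]
Equality must therefore hold in both factors, which is precisely the statement that $\gamma$ is a witness both from $A$ to $C$ and from $C$ to $B$. For the direction ``$\Leftarrow$'' I run the same chain in the opposite order: the two witness assumptions identify $l_C(\gamma)/l_A(\gamma)$ with $\Lambda_R(A,C)$ and $l_B(\gamma)/l_C(\gamma)$ with $\Lambda_R(C,B)$, multiply to $\Lambda_R(A,B)$ by the geodesic identity, and coincide with $l_B(\gamma)/l_A(\gamma)$, certifying $\gamma$ as a witness from $A$ to $B$.

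For the symmetric statement I would invoke the remark that a symmetric geodesic $g$ is the same thing as both $g$ and $\overline g$ being asymmetric geodesics. Given a symmetric witness $(\gamma,\omega)$ from $A$ to $B$, apply the asymmetric case to $\gamma$ along $g$ and to $\omega$ along $\overline g$ (which is an asymmetric geodesic from $B$ to $A$ with $\overline g(-t)=C$), and conversely.

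I do not expect a real obstacle: the only thing to be careful about is the volume normalization, because $d_R$ formally involves $\vol(\Gamma)/\vol(\Gamma')$; since $CV_n$ consists of unit-volume graphs this factor is $1$ and the geodesic triangle equality translates cleanly into the multiplicativity of $\Lambda_R$. Everything else is a two-line manipulation of the inequality $\Lambda_R\geq l_Y/l_X$.
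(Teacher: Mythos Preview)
Your proof is correct and follows essentially the same route as the paper: both use the triangle equality $d_R(A,B)=d_R(A,C)+d_R(C,B)$ along the geodesic (equivalently, multiplicativity of $\Lambda_R$), the bound $\Lambda_R(X,Y)\geq l_Y(\gamma)/l_X(\gamma)$, and the telescoping of length ratios. The only cosmetic difference is that the paper argues ``$\Rightarrow$'' by contradiction while you run a direct squeeze $\Lambda_R(A,B)=\tfrac{l_B(\gamma)}{l_A(\gamma)}\leq\Lambda_R(A,C)\Lambda_R(C,B)=\Lambda_R(A,B)$ and read off equality in each factor; the symmetric case is handled identically in both.
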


\begin{Bew}
Wlog. assume all of the graphs to be normalized.\\
\glqq $\Rightarrow$\grqq : Assume $\gamma$ is a witness from $A$ to $B$ but not a witness from $A$ to $C$ or from $C$ to $B$, hence  $\Lambda_R(A,C)>\frac{l_C(\gamma)}{l_A(\gamma)}$ or $\Lambda_R(C,B)>\frac{l_B(\gamma)}{l_C(\gamma)}$. Now $C$ is an intermediate point of a geodesic, hence
\begin{align*}
d_R(A,B)&=d_R(A,C)+d_R(C,B)\\
&=\log(\Lambda_R(A,C))+\log(\Lambda_R(C,B))\\
&< \log(\frac{l_C(\gamma)}{l_A(\gamma)})+\log(\frac{l_B(\gamma)}{l_C(\gamma)})\\
&=\log(\frac{l_C(\gamma)}{l_A(\gamma)}\frac{l_B(\gamma)}{l_C(\gamma)})=\log(\frac{l_B(\gamma)}{l_A(\gamma)})= d_R(A,B)
\end{align*}
which is the desired contradiction.\\
\glqq $\Leftarrow$\grqq : Let $\gamma$ be a witness from $A$ to $C$ and from $C$ to $B$. Again $C$ is an intermediate point of a geodesic and so:
\[d_R(A,B)=d_R(A,C)+d_R(C,B)=\log(\frac{l_C(\gamma)}{l_A(\gamma)})+\log(\frac{l_B(\gamma)}{l_C(\gamma)})= \log(\frac{l_B(\gamma)}{l_A(\gamma)})\]
Hence $\Lambda_R(A,B)=\frac{l_B(\gamma)}{l_A(\gamma)}$ and so $\gamma$ is a witness from $A$ to $B$.
\end{Bew}
Analogue we get the symmetric case by applying lemma \ref{keep witness} to each direction.

On the other hand we can glue any geodesics together, if the endpoints have fitting witnesses.

\begin{Lem}\label{glueing geodesics}
Let $A,B,C \in CV_n$ and $g,h$ be (symmetric) geodesics from $A$ to $C$ respectively $C$ to $B$. Then the following are equivalent:
\begin{enumerate}[(i)]
\item $g*h$ (=concatenation of $g$ and $h$) is a geodesic from $A$ to $B$
\item the set of witnesses from $A$ to $B$ is the intersection of the witnesses from $A$ to $C$ and the witnesses from $C$ to $B$, i.e. $W_{(R)}(A,B) = W_{(R)}(A,C) \cap W_{(R)}(C,B)$.

\item $\exists \ \gamma \in F_n^2$ (resp. $F_n$) s.t. $\gamma$ is a witness from $A$ to $C$ and a witness from $C$ to $B$, i.e. $W_{(R)}(A,C) \cap W_{(R)}(C,B) \neq \emptyset$.
\item There exists a (symmetric) geodesic $f$ from $A$ to $B$ s.t. $C$ lies on $f$.
\item $d_{(R)}(A,B)=d_{(R)}(A,C)+d_{(R)}(C,B)$
\end{enumerate}
\end{Lem}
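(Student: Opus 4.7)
The plan is to prove the equivalence via a cycle of implications, with (iv) slotted in as a parallel branch. Concretely I would establish (i) $\Rightarrow$ (ii) $\Rightarrow$ (iii) $\Rightarrow$ (v) $\Rightarrow$ (i), together with the two auxiliary steps (i) $\Rightarrow$ (iv) $\Rightarrow$ (v). I would first treat the asymmetric case; the symmetric version follows by applying the asymmetric arguments in both directions, using that a symmetric geodesic is exactly one which remains a geodesic after reversing orientation (Remark after Lemma \ref{2 lemma triangle ineq and geodesics}).

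Four of the six implications are essentially immediate. (i) $\Rightarrow$ (iv) is trivial via $f := g*h$. (iv) $\Rightarrow$ (v) is Lemma \ref{2 lemma triangle ineq and geodesics} applied to the three points $A$, $C$, $B$ on $f$. (i) $\Rightarrow$ (ii) is Lemma \ref{keep witness}, whose biconditional translates verbatim into the claimed set equality. (ii) $\Rightarrow$ (iii) follows because $W_R(A,B)$ is never empty: Theorem \ref{1 candidate is enough} supplies a candidate realising the maximum, and by (ii) this element sits in the intersection.

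The two non-trivial steps are (iii) $\Rightarrow$ (v) and (v) $\Rightarrow$ (i). For (iii) $\Rightarrow$ (v), given $\gamma \in W_R(A,C) \cap W_R(C,B)$, I would compute
\[
\Lambda_R(A,B) \;\geq\; \frac{l_B(\gamma)}{l_A(\gamma)} \;=\; \frac{l_C(\gamma)}{l_A(\gamma)} \cdot \frac{l_B(\gamma)}{l_C(\gamma)} \;=\; \Lambda_R(A,C) \cdot \Lambda_R(C,B),
\]
take logarithms (volumes normalised in projectivised Outer Space) to obtain $d_R(A,B) \geq d_R(A,C) + d_R(C,B)$, and combine with the reverse triangle inequality to force equality. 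For (v) $\Rightarrow$ (i), additivity of length under concatenation together with $g,h$ being geodesics gives $l(g*h) = d_R(A,C) + d_R(C,B) = d_R(A,B)$ by hypothesis. A sandwich argument then propagates this to every subarc: for points $A \leq P \leq Q \leq B$ along $g*h$ the three subarc lengths sum to $d_R(A,B)$ and each bounds the corresponding $d_R$-distance from above, while the three distances themselves sum to at least $d_R(A,B)$ by the triangle inequality, forcing equalities throughout and in particular $l(g*h|_{[P,Q]}) = d_R(P,Q)$.

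The main potential obstacle is essentially bookkeeping: keeping the asymmetric orientations consistent (in (iii) $\Rightarrow$ (v) one must respect the direction $A \to C \to B$ so that the two quotients telescope), and lifting the argument to the symmetric setting. There (iii) asks for a pair $(\gamma,\omega) \in F_n \times F_n$ with $\gamma$ a witness from $A$ to $C$ and from $C$ to $B$, and $\omega$ a witness from $B$ to $C$ and from $C$ to $A$; the two components then independently feed the asymmetric argument in opposite directions, and summing the two resulting asymmetric triangle equalities yields the symmetric one.
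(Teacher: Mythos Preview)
Your proposal is correct and follows essentially the same route as the paper: the same cycle (i) $\Rightarrow$ (ii) $\Rightarrow$ (iii) $\Rightarrow$ (v) $\Rightarrow$ (i) with (iv) handled via (i) $\Rightarrow$ (iv) $\Rightarrow$ (v), the same use of Lemma \ref{keep witness} and Theorem \ref{1 candidate is enough}, and the same telescoping computation for (iii) $\Rightarrow$ (v). Your sandwich argument for (v) $\Rightarrow$ (i) is a slightly more streamlined packaging of the paper's explicit case analysis with points distributed over $g$ and $h$, but the underlying idea is identical.
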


\begin{Bew}
We will prove only the asymmetric case. The symmetric case follows then directly by interchanging $A$ and $B$.\\
\glqq $(i) \Rightarrow (ii)$\grqq: Follows directly from lemma \ref{keep witness}\\
\glqq $(ii) \Rightarrow (iii)$\grqq: Clear, since there exists at least one witness from $A$ to $B$.\\
\glqq $(iii) \Rightarrow (v)$\grqq: Let $\gamma \in F_n$ be as stated in (iii), then we have
\begin{align*}
d_R(A,B)&\geq \log(\frac{l_B(\gamma)}{l_A(\gamma)})\\
&=\log(\frac{l_C(\gamma)}{l_A(\gamma)})+\log(\frac{l_B(\gamma)}{l_C(\gamma)})\\
&=d_R(A,C)+d_R(C,B) \geq d_R(A,B)
\end{align*}
Where last equality holds since $\gamma$ is a witness from $A$ to $C$ and from $C$ to $B$. Hence the desired equality holds.\\
\glqq $(v) \iff (i)$\grqq: Follows directly from lemma \ref{2 lemma triangle ineq and geodesics} and that $g$ and $h$ are geodesics. Namely assume $(v)$ holds, then for points $a,b$ on $g$ and $c$ on $h$ we get by the triangle inequality and lemma \ref{2 lemma triangle ineq and geodesics}
\begin{align*}
d(A,B)&= d(A,C)+d(C,B)\\
&=d(A,a)+d(a,b)+d(b,C) + d(C,c) + d(c,B)\\
&\geq d(A,C)+d(a,b)+d(b,c)+d(c,B) \\
&\geq d(A,C)+d(a,c)+d(c,b)\\
& \geq d(A,B)
\end{align*}
hence equality holds everywhere. Similar for different distributions of $a,b,c$ on $g \star h$\\
\glqq $(iv) \iff (i)$\grqq: $(i) \Rightarrow (iv)$ is clear and $(iv) \Rightarrow (v)$ follows from lemma \ref{2 lemma triangle ineq and geodesics}.
\end{Bew}
Keep in mind that $(i) \iff (iv) \iff (v)$ holds in every metric space.
\vspace{\baselineskip}

Lemma \ref{glueing geodesics} implies that a point $C \in CV_n$ lies on a geodesic from $A$ to $B$ if and only if one (and hence all) witness from $A$ to $B$ is also a witness from $A$ to $C$ and from $C$ to $B$. For the symmetric metric this is not sufficient, since there might be no symmetric geodesic from $A$ to $B$ at all (s. example \ref{non_geodesic_ball_dim2}).
%

Another interesting aspect is, that the corresponding witnesses to a finite geodesic can be considered as a \emph{coarse direction} the geodesic has. This is in a certain way the only information a geodesic remembers from its past and cares about, if you want to continue it. Lemma \ref{keep witness} tells us, that these coarse direction will be kept during the whole geodesic and lemma \ref{glueing geodesics} means, that if we want to continue a finite geodesic, we only have to care about the witness of its two endpoints.

In particular since there exists always a candidate witness and there are finitely many candidates for a point we can assign to an outgoing geodesic ray a (global) coarse direction, namely a candidate witness from the origin to every point on the geodesic.

Using that locally minimizing geodesics are piecewise minimizing geodesics glued together as in lemma \ref{glueing geodesics} we can now construct two pathogens of minimizing geodesics also occurring in $\R^n$ with the maximum norm.

\begin{Bem}\label{2 fun stuff with locally-minimizing geodesics}
\begin{enumerate}[(i)]
\item There exist null-homotopic locally minimizing geodesics. One can can easily construct such an example by iteratively changing the coarse direction. For example let $\Gamma :=$ \begin{minipage}{2cm}
\begin{tikzpicture}[scale=1.5]
\filldraw (0,0) circle[radius=1pt];
\filldraw (1,0) circle[radius=1pt];
\draw(0,0) to node[above=-0.1]{$l_2$} (1,0);
\draw[ blue,postaction={decorate,decoration={
		markings, mark=at position .5 with {\arrow{stealth}} }}] (0,0) to [out=90,in=90] node[above]{$l_1$}  (1,0);
\draw[red,postaction={decorate,decoration={
		markings, mark=at position .5 with {\arrow{stealth}} }}] (0,0) to [out=270,in=270] node[below]{$l_3$} (1,0);
\end{tikzpicture}
\end{minipage} be the $\theta$-graph with marking $\textcolor{blue}{x}$ and $\textcolor{red}{y}$.

Consider the three points $A,B,C \in CV_2$ corresponding to $\Gamma$ with edge-lengths $(l_1,l_2,l_3) = (1,1,1), (2,1,1)$ and $(1,1/3,1)$. 

Observe that the candidates of $\Gamma$ are $x,y$ and $xy^{-1}$, hence for each pair of $A,B,C$ at least one of them is a witness. A short calculation shows $\{x,xy^{-1}\} \subset W_R(A,B),\linebreak
\{y,xy^{-1}\} \subset W_R(B,C)$  and $\{x,y\} \subset W_R(C,A)$. Hence by lemma \ref{glueing geodesics} concatenating the straight edges $\overrightarrow{AB}, \overrightarrow{BC}$ and $\overrightarrow{CA}$ will yield a closed, locally minimizing geodesic, since any two edges have a common coarse direction. Similarly one can construct an example for the symmetric metric in the shape of a hexagon.
\item Let $g: I \to CV_n$ be a continuous path and $\varepsilon > 0$.

Then there exists a locally minimizing geodesic $h: J \to CV_n$, s.t. $g$ lays in the $\varepsilon$-neighbourhood of $h$. In particular there exist locally minimizing geodesics which are dense in $CV_n$. A technical complete proof which uses some elements from the following chapters can be found in the appendix.
\end{enumerate}
\end{Bem}

\section{Envelopes in Outer Space}
As mentioned before, geodesics in Outer Space are almost never unique. To give a measure how much uniqueness fails, it seems reasonable to look at the whole set of geodesics at the same time. We borrow the notion of \emph{envelope} from \cite{DLRT}.

\begin{defi}
Let $(X,d)$ be a metric space and $A,B \in X$.\\
Then we define the \emph{envelope from $A$ to $B$} as
\begin{align*}
\Env_d(A,B):=\{C \in X \ | \ &\exists \text{ geodesic } g: [0,1] \to X , t\in [0,1], \\
&\text{ s.t. } g(0)=A, g(1)=B, g(t)=C\}.
\end{align*} 
For the envelopes in $CV_n$ we will write $\EnvR:=\Env_{d_R}$ for the asymmetric metric and $\Env:=\Env_d$ for the symmetric metric.
\end{defi}

Envelopes have the following important and easy to see properties which we will use later on.

\begin{Bem}
\begin{enumerate}[(i)]
\item It is clear, that isometries send envelopes to envelopes since isometries have to send geodesics to geodesics.
\item The diameter of an envelope is bounded. More explicitly:
\begin{itemize}
\item if the metric is symmetric the diameter of an envelope is the distance of the two endpoints by the triangle inequality: For $x,y \in \Env_d(A,B)$ we have
\begin{align*}
&d(x,y) \leq d(x,B)+d(B,y) \text{ and } d(x,y) \leq d(A,x)+d(A,y)\\
\Rightarrow &2*d(x,y) \leq 2*d(A,B)
\end{align*}
\item In an asymmetric metric we get for all $x,y \in \Env_d(A,B)$
\[d(x,y) \leq d(x,B)+d(B,A)+d(A,y) \leq 2*d(A,B)+d(B,A).\]
Consider as an example an oriented graph with vertices $A,B$, two equally long edges from $A$ to $B$ and one edge from $B$ to $A$ to see this is the best estimate we can do in the general setting.
\end{itemize}
\item By the equivalence lemma \ref{glueing geodesics} (i) $\iff$ (v) we have nested envelopes, i.e. for $A,B \in X, C \in \Env(A,B)$ we have
\[\Env(A,C) \cup \Env(C,B) \subseteq \Env(A,B).\]
\item By lemma \ref{glueing geodesics} (v) we can also write envelopes in $CV_n$ as
\[\Env_R(A,B)=\{C \in CV_n \ | \ d_R(A,B)=d_R(A,C)+d_R(C,B)\}.\]
\end{enumerate}
\end{Bem}

Before we look into envelopes in $CV_n$ we will introduce two notations:

\begin{Not}
Let $A \in CV_n$.
\begin{enumerate}[(i)]
\item We denote from now on $T(A)$ as the simplex in $CV_n$ coming from the topological type of $A$, i.e. all elements of $CV_n$ gained by changing only the lengths of $A$.
\item Let $e \in E(A)$ be an edge in the underlying graph of $A$ and $\gamma \in F_n$. Then we denote by $\#(e,\gamma)$ the number of times the cyclically reduced path corresponding to $\gamma$ passes through $e$ (without considering the orientation). This depends only on the topological type of $T(A)$ and not its lengths.
\item The supporting simplices of an envelope denotes all simplices $T \subset CV_n$ with non-empty intersection
\[\supp(\EnvR(A,B)):=\{\Delta \subset CV_n \ | \ \Delta \text{ is an open simplex and } \Delta \cap \EnvR(A,B) \neq \emptyset\}\]
\end{enumerate}
\end{Not}

Envelopes in $CV_n$ behave nicely in regard of the simplicial structure of $CV_n$. First of all, they are polytopes in each simplex as we will see in lemma \ref{wiggling polytope is polytope}. We will use this to construct locally rigid geodesics. In the next chapter we will then use envelopes to determine faces of Outer Space.

\begin{Lem}\label{wiggling polytope is polytope}
$\EnvR(A,B)$ is a polytope, i.e. in each simplex an intersection of finitely many half-spaces.
\end{Lem}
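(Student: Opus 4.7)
The plan is to fix an open simplex $T$ of $CV_n$, parametrize it by its edge-length coordinates, and show that $\EnvR(A,B)\cap T$ is cut out by finitely many linear inequalities in those coordinates.

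First I would reformulate envelope membership. By the remark following the definition of $\EnvR$ (which uses Lemma \ref{glueing geodesics}(v)), a point $C$ lies in $\EnvR(A,B)$ if and only if $d_R(A,C)+d_R(C,B)=d_R(A,B)$; passing to the Lipschitz stretch factor, this is
\[\Lambda_R(A,C)\,\Lambda_R(C,B) \;=\; \Lambda_R(A,B).\]
The reverse inequality $\geq$ is just the triangle inequality, so the condition collapses to the single directional inequality
\[\Lambda_R(A,C)\,\Lambda_R(C,B) \;\leq\; \Lambda_R(A,B).\]

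Next I would apply Theorem \ref{1 candidate is enough} to each factor. Since every $C\in T$ has the same topological type, the candidate set $\cand(C)$ is a fixed finite set $\cand(T)$, independent of $C$. Hence
\[\Lambda_R(A,C) = \max_{\alpha\in\cand(A)} \frac{l_C(\alpha)}{l_A(\alpha)}, \qquad \Lambda_R(C,B) = \max_{\beta\in\cand(T)} \frac{l_B(\beta)}{l_C(\beta)},\]
and the envelope condition becomes the finite family of inequalities
\[\frac{l_C(\alpha)}{l_A(\alpha)}\cdot\frac{l_B(\beta)}{l_C(\beta)} \;\leq\; \Lambda_R(A,B) \qquad \text{for all }(\alpha,\beta)\in\cand(A)\times\cand(T).\]

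Finally I would invoke linearity. For any conjugacy class $\gamma$, the length $l_C(\gamma)=\sum_{e\in E(T)}\#(e,\gamma)\,l(e)$ is linear in the edge-length coordinates of $C$, while $l_A(\alpha)$, $l_B(\beta)$, and $\Lambda_R(A,B)$ are positive constants. Since $l_A(\alpha)$ and $l_C(\beta)$ are strictly positive on the open simplex, clearing denominators preserves each inequality and yields the linear half-space condition
\[l_C(\alpha)\cdot l_B(\beta) \;-\; \Lambda_R(A,B)\,l_A(\alpha)\cdot l_C(\beta) \;\leq\; 0.\]
As the index set $\cand(A)\times\cand(T)$ is finite, $\EnvR(A,B)\cap T$ is a finite intersection of half-spaces in $T$, which is the desired polytope property.

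The only step requiring real care is the first reformulation: one must be sure that envelope membership really collapses to the single stretch inequality above, with no residual constraint coming from the maxima being attained at different candidates on the two sides. Once that is established, the rest is the elementary observation that length functions along a fixed conjugacy class are linear on each simplex, so the product condition unfolds into a finite list of linear inequalities.
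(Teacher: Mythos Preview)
Your proof is correct but proceeds along a different line from the paper's. The paper fixes a single witness $\gamma\in W_R(A,B)$ and uses the characterization from Lemma~\ref{glueing geodesics} that $C\in\EnvR(A,B)$ if and only if $\gamma$ remains a witness on both legs $A\to C$ and $C\to B$. This produces two \emph{separate} families of linear inequalities: the $(\star)$-inequalities (one per $\omega\in\cand(A)$, expressing that $\gamma$ is at least as stretched as $\omega$ from $A$ to $C$) and the $(\star\star)$-inequalities (one per $\delta\in\cand(C)$, expressing that $\gamma$ is at least as stretched as $\delta$ from $C$ to $B$). You instead bound the product $\Lambda_R(A,C)\,\Lambda_R(C,B)$ directly and unfold it via $\max_\alpha X_\alpha\cdot\max_\beta Y_\beta=\max_{\alpha,\beta}X_\alpha Y_\beta$, obtaining a single family indexed by pairs $(\alpha,\beta)\in\cand(A)\times\cand(T)$.

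The paper's route pays off downstream: the $(\star)$-inequalities depend only on $A$ and the $(\star\star)$-inequalities only on $B$, so the envelope factors as the intersection of an out-envelope $\Env_R^{out}(A,\gamma)$ with an in-envelope $\Env_R^{in}(B,\gamma)$, each a polyhedral cone in its own right. This decomposition is used repeatedly afterwards (to define out/in-envelopes, to identify rigid geodesics with their edges, and in the face-detection arguments of Section~4). Your half-spaces mix $A$- and $B$-data in each inequality and do not exhibit this splitting, though for the bare statement of the lemma your argument is slightly more direct and avoids singling out a particular witness.
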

 
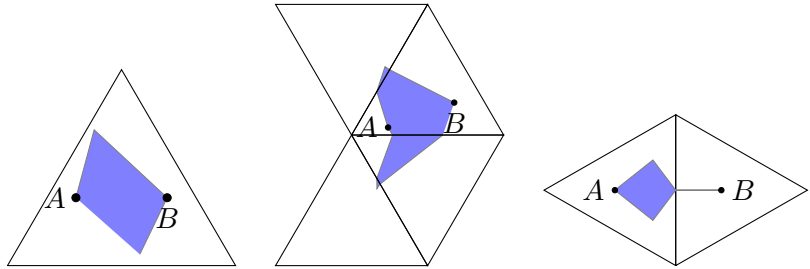
\begin{figure}[h]\label{bilder von envelopes}
\center
\begin{tikzpicture}[scale=3]
\draw (0,0)--(1,0) -- (0.5,0.8660254) -- (0,0);
\draw[gray, fill=blue!50] (0.3,0.3)  -- (0.38,0.6) -- (0.7,0.3) -- (0.58,0.05) ;
\draw[fill] (0.3,0.3) circle[radius=0.5pt] node[left]{$A$};
\draw[fill] (0.7,0.3) circle[radius=0.5pt] node[below]{$B$};
\end{tikzpicture}
\quad 
\begin{tikzpicture}[scale=2]
\draw[gray, fill=blue!50] (0.675,0.215) -- (0.25, 0.433) -- (0.22, 0.453) -- (0.166,0.288)-- (0.24,0.05) -- (0.27,0) -- (0.166, -0.288) -- (0.167,-0.355)-- (0.1875, -0.32475) -- (0.6,0);  
\draw[fill] (0.24,0.05) circle[radius=0.5pt] node[left]{$A$};
\draw[fill] (0.675,0.215) circle[radius=0.5pt] node[below]{$B$};
\draw (0,0)--(1,0) -- (0.5,0.8660254) -- (0,0);
\draw (0,0)--(1,0) -- (0.5,-0.8660254) -- (0,0);
\draw (0,0)-- (0.5,-0.8660254) -- (-0.5,-0.8660254) -- (0,0);
\draw (0,0)-- (0.5,0.8660254) -- (-0.5,0.8660254) -- (0,0);
\end{tikzpicture}
\quad
\begin{tikzpicture}[scale=2]
\draw (0,0)--(0,1) -- (-0.8660254,0.5) -- (0,0);
\draw (0,0)--(0,1) -- (0.8660254,0.5) -- (0,0);
\draw[gray, fill=blue!50] (-0.4,0.5)  -- (-0.15,0.7)-- (0,0.5) -- (0.3,0.5) -- (0,0.5) -- (-0.15,0.3) -- (-0.4,0.5);
\draw[fill] (-0.4,0.5) circle[radius=0.5pt] node[left]{$A$};
\draw[fill] (0.3,0.5) circle[radius=0.5pt] node[right]{$B$};
\end{tikzpicture}
\caption{Some envelopes $\EnvR(A,B)$ in $CV_2$} 
\end{figure}

\begin{Bew}
Let $\gamma$ be a witness from $A$ to $B$. By the implication after lemma \ref{glueing geodesics} we have $C \in \EnvR(A,B) \iff \gamma $ is a witness from $A$ to $C$ and from $C$ to $B$. So each $\omega \in cand(A)$ and $\delta \in cand(C)$ yields a linear inequality and hence parametrises a half-space in the simplex $T(C)$:
\begin{align*}
\frac{l_C(\gamma)}{l_A(\gamma)}\geq \frac{l_C(\omega)}{l_A(\omega)} &\iff 
\sum_{e_i \in E(C)} l_C(e_i) \cdot  (l_A(\omega) \cdot \#(e_i, \gamma)- l_A(\gamma) \cdot \#(e_i, \omega)) \geq 0 \quad \quad (\star)\\
\frac{l_B(\gamma)}{l_C(\gamma)}\geq \frac{l_B(\delta)}{l_C(\delta)} &\iff 
\sum_{e_i \in E(C)} l_C(e_i) \cdot (l_B(\gamma) \cdot \#(e_i, \delta) - l_B(\delta) \cdot \#(e_i, \gamma)) \geq 0 \quad \quad (\star \star)
\end{align*}
Since the terms $(l_A(\omega) \cdot \#(e_i, \gamma)- l_A(\gamma) \cdot \#(e_i, \omega))$ and $(l_B(\gamma) \cdot \#(e_i, \delta) - l_B(\delta) \cdot \#(e_i, \gamma))$ do not depend on the weights we get indeed a linear inequality and hence a half-space in $T(C)$.

By theorem \ref{1 candidate is enough} there exists a maximally stretched candidate, hence $\gamma$ is maximally stretched and thus a witness from $A$ to $C$ and from $C$ to $B$ if and only if above inequalities are satisfied for all $\omega \in cand(A), \delta \in cand(C)$,
i.e. if $C$ lies in the intersection of all these half-spaces. These are finitely many and therefore we are done.
\end{Bew}

\begin{Bem}
In the following, when we talk about the faces of an envelope we only mean faces arising from equalities in the $(\star)$ or $(\star \star)$-inequalities above.
In particular that means we will exclude the faces arising solely from intersections with a simplex. 
\end{Bem}

On the other hand by the following lemma the support of an envelope is finite, since its diameter is bounded.

\begin{Lem}
For any $B \in CV_n$ and $r>0$ the ingoing ball $B_r^{in}(B):=\{A \in CV_n \ | \ d_R(A,B) \leq r \}$ intersects with only finitely many simplices of $CV_n$.
\end{Lem}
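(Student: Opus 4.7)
The plan is to combine a length bound on candidates with the finiteness of short conjugacy classes in the fixed graph $B$, and then argue that the candidate set in $F_n$ pins down the simplex up to finite ambiguity.

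First I would bound the $B$-length of every candidate of any point in the in-ball. For $A=(\Gamma,l,m)$ with $\vol(A)=1$, each candidate $\gamma\in\cand(A)$ traverses every edge of $\Gamma$ at most twice: simple loops and figures of eight use each edge at most once, while in a barbell only the connecting handle is traversed twice. Hence $l_A(\gamma)\leq 2\vol(A)=2$, and from $d_R(A,B)\leq r$ we get $l_B(\gamma)\leq \Lambda_R(A,B)\cdot l_A(\gamma)\leq 2e^r$.

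Next I would invoke the classical fact that only finitely many conjugacy classes in $F_n$ have $B$-length below a fixed bound: letting $\varepsilon_B>0$ denote the shortest edge length in $B$, any cyclically reduced loop of length at most $L$ crosses at most $L/\varepsilon_B$ edges, and a finite graph admits only finitely many such edge-paths. Applied with $L=2e^r$, this yields a finite set $F\subset F_n$ such that $\cand(A)\subset F$ for every $A\in B_r^{in}(B)$.

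The last step is to see that only finitely many simplices have their candidate set contained in $F$. Every graph in $CV_n$ has at most $3n-3$ edges, so only finitely many underlying combinatorial graphs $\Gamma$ arise. For a fixed $\Gamma$, changing the marking by $\phi\in\Aut(F_n)$ replaces the candidate set in $F_n$ by its image under $\phi^{-1}$. Collapsing a spanning tree of $\Gamma$ exhibits an explicit basis of $F_n$ sitting inside the simple loops of $\Gamma$, which are themselves candidates; this basis must therefore lie in $F$. Since an automorphism of $F_n$ is determined by its values on a basis and $F$ is finite, only finitely many $\phi^{-1}$, and hence only finitely many inequivalent markings, are compatible with $\cand(A)\subset F$. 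Summing over the finitely many underlying graphs $\Gamma$ gives the claim.

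The main obstacle is this last step, namely turning the set-theoretic condition $\cand(A)\subset F$ into a bound on the number of markings; the argument relies crucially on candidates containing a full basis of $F_n$ (via the spanning-tree construction) and on the rigidity of $\Aut(F_n)$ under its action on a basis.
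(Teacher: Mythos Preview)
Your first two steps are fine, and the overall strategy is reasonable. The gap is in the final step, where you pass from ``the conjugacy classes of a basis lie in $F$'' to ``only finitely many markings''. You write that an automorphism of $F_n$ is determined by its values on a basis; that is true for values as \emph{elements}, but $F$ is a finite set of \emph{conjugacy classes}, and this is all your simple-loop candidates control. For $n\geq 3$ the subgroup of $\Aut(F_n)$ consisting of automorphisms sending each $x_i$ to a conjugate of itself (the basis-conjugating or McCool group, generated by $\alpha_{ij}\colon x_i\mapsto x_j x_i x_j^{-1}$) has infinite image in $\Out(F_n)$: for instance $\alpha_{12}^k$ is never inner for $k\neq 0$ once $n\geq 3$, since an inner automorphism fixing $x_2$ and $x_3$ must be trivial. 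So infinitely many inequivalent markings share the same conjugacy classes of basis elements, and your argument as written does not terminate.

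The paper closes exactly this gap. It first reduces to $B=R_n$, so that $l_B$ is (a multiple of) cyclically reduced word length, and then bounds the \emph{actual} word length of each $w_i$, not just its cyclic reduction. The extra leverage comes from the products $w_iw_j$: these still satisfy $l_A(w_iw_j)\leq 2$ and hence $l_B(w_iw_j)\leq 2e^r$ by the general stretch inequality (no need for $w_iw_j$ to be a candidate), and after one simultaneous conjugation the paper extracts from this a bound on $|w_i|$. Your route can be repaired along similar lines by feeding in the figure-of-eight and barbell candidates rather than only the simple loops: for example in the rose the candidate $x_2x_3$ has cyclically reduced length $2k+2$ under $\alpha_{21}^k$, which kills the McCool ambiguity. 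But this extra argument is precisely the ``main obstacle'' you flagged, and it is not yet present in your write-up.
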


\begin{proof}
Wlog. we can assume that $B$ is the standard marked rose $R_n$ with edge lengths all $1/n$, since we have $B_r^{in}(B) \subseteq B_{r+d_R(B,R_n)}(R_n)$. We will show that for a fixed graph there are only finitely many markings  such that it still lies in $B_r(R_n)$. Since there are only finitely many finite graphs with valency greater three and fundamental group $F_n$ the claim follows.

Fix for a normalized element $A \in CV_n$ a spanning tree and label the edges outside of the tree with words $w_1, \dots, w_n \in F_n$ according to its marking. Since each $w_i$ corresponds to a simple loop it has length of at most 1 in $A$. On the other hand for each word $w \in F_n$ its length in $B$ is $\frac{1}{n}$th of its cyclically reduced word-length. If now $A \in B_r^{in}(B)$ we have $\log \frac{l_B(w_i)}{l_A(w_i)} \leq r$, hence cyclically reduced edge labels of $A$ have at most word-length $n e^r$.

On the other hand assume $w_i$ is not cyclically reduced. Since we can simultaneously conjugate all labels in $A$ without changing the element in $CV_n$ we can assume, that there exists a $w_j$ such that $w_i w_j$ is a cyclically reduced word. The length of $w_i w_j$ in $A$ is at most $2$. We have now to consider two cases:

\begin{itemize}
\item  $w_j$ is not cyclically reduced: Then $w_i w_j$ is already a reduced word and hence its cyclically word-length is just the sum of the word-lengths of $w_i$ and $w_j$. Thus $w_i$ has word-length of at most $2n e^r$.
\item $w_j$ is cyclically reduced: Then the reduced word length of $w_i w_j$ is at least the difference of the  word-lengths of $w_i$ and $w_j$. Since $w_j$ is cyclically reduced its word-length is at most $n e^r$ and thus $w_i$ has at most word-length of $3n e^r$.
\end{itemize}
\end{proof}

\begin{Kor}
$\EnvR(A,B)$ is compact.
\end{Kor}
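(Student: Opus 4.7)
The plan is to combine the diameter bound for envelopes with the finiteness statement just proved, reducing the problem to a compactness question inside a bounded region meeting only finitely many simplices of $CV_n$.

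First, I would observe that any $C \in \EnvR(A,B)$ satisfies the triangle equality $d_R(A,C) + d_R(C,B) = d_R(A,B)$, and in particular $d_R(C,B) \leq d_R(A,B)$. Hence
\[
\EnvR(A,B) \subseteq B_{d_R(A,B)}^{in}(B),
\]
and the preceding lemma guarantees that only finitely many open simplices $T_1, \dots, T_k$ of $CV_n$ meet the envelope. In each $T_i$, Lemma \ref{wiggling polytope is polytope} expresses $\EnvR(A,B) \cap T_i$ as a finite intersection of closed half-spaces cut out by the non-strict inequalities $(\star)$ and $(\star\star)$, and the level-set description from property (iv) of envelopes together with continuity of $d_R$ shows that $\EnvR(A,B)$ itself is closed in $CV_n$.

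The conclusion then follows via sequential compactness, using that the Lipschitz topology on $CV_n$ is metrizable. Given a sequence $(C_j) \subseteq \EnvR(A,B)$, I would pass by pigeonhole to a subsequence lying in a single $T_i$, and then extract a Euclidean-convergent subsequence with some limit $C_\infty$ in the closed Euclidean simplex $\overline{T_i}$. The main obstacle, and the only real subtlety, is that $\overline{T_i}$ may contain boundary strata that are \emph{missing} from $CV_n$, namely those obtained by collapsing an essential loop of the underlying graph; a priori $C_\infty$ could lie on such a stratum.

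To rule this out I would argue as follows. Suppose an edge subset whose lengths tend to zero along the subsequence contains a cycle. Then this cycle represents some non-trivial $\gamma \in F_n$ with $l_{C_j}(\gamma) \to 0$, so $l_B(\gamma)/l_{C_j}(\gamma) \to \infty$, forcing $d_R(C_j,B) \to \infty$ and contradicting the uniform bound $d_R(C_j,B) \leq d_R(A,B)$ established at the start. Hence the collapsing edges form a forest, so $C_\infty$ lies in a genuine (possibly lower-dimensional) simplex of $CV_n$. Since $\EnvR(A,B)$ is closed in $CV_n$, this forces $C_\infty \in \EnvR(A,B)$, proving compactness.
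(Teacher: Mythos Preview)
Your proof is correct and follows essentially the same route as the paper. The paper's argument is terser: from the diameter bound it asserts in one breath that the envelope ``stays away from missing faces'' and meets only finitely many simplices, then concludes by observing that the intersection with each closed simplex is compact and that a finite union of compacts is compact. Your sequential-compactness argument unpacks precisely the ``stays away from missing faces'' claim---the contradiction $d_R(C_j,B)\to\infty$ when a loop collapses is exactly the content the paper leaves implicit.
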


\begin{Bew}
Since the diameter of an envelope is bounded, it stays away from missing faces in $CV_n$ and has non-empty intersection with at most finitely many simplices. Hence the intersection of the envelope with a simplex is closed in the simplicial closure and therefore compact. Thus the envelope is as finite union of compact sets again compact.
\end{Bew}

As we have seen in lemma \ref{wiggling polytope is polytope} each envelope is the intersection of two cones coming from the two end-points. Namely the intersection of half-spaces belonging to the inequalities of type $(\star)$ and the inequalities of type $(\star \star)$. One can see these cones as set of points a geodesic ray can reach, if we fix the coarse direction and the start-point.

\begin{defi}
Let $S \subseteq F_n$ be a subset which we consider as coarse direction or wanted witnesses. Then we call
\[\Env_R^{out} (A,S):=\{B \in CV_n \ | \ S \subseteq W_R(A,B)\}\]
the \emph{out-envelope of $A$ in the direction of $S$} and
\begin{align*}
\Env_R^{in} (B,S)&:=\{A \in CV_n \ | \ S \subseteq W_R(A,B)\}
\end{align*}
the \emph{in-envelope of $B$ in the direction of $S$}.\\
If $S=\{\gamma\}$ is a singleton, we will just write $\Env_R^{out}(A,\gamma)$.
\end{defi}

\begin{Bem} \label{3 remark basic properties out-envelopes}
\begin{enumerate}[(i)]
\item By lemma \ref{glueing geodesics} the in- and out-envelopes tell, how one can extend geodesics in either direction and furthermore we get
\[\EnvR(A,B)=\Env_R^{out} (A,S) \cap \Env_R^{in} (B,S)\]
for all non-empty $S \subseteq W_R(A,B)$.
\item As in emma \ref{wiggling polytope is polytope} we get that in- and out-envelopes are polytopes in each simplex, namely the out-envelopes are parametrized by the inequalities of $(\star)$ and the in-envelopes by the $(\star \star)$-inequalities of \ref{wiggling polytope is polytope}. 
\item By definition intersections of out- resp. in-envelopes are the out- resp. in-envelopes of their union of directions, i.e. $\Env_R^{out}(A,S)=\bigcap_{\gamma \in S} \Env_R^{out}(A,\gamma)$.
\end{enumerate}
\end{Bem}

By Remark \ref{c2_geodesics_in_CVn} we know, that almost never two points are joined by unique geodesics, but with the help of envelopes we can construct a geodesic, which is at least piecewise unique. 

\begin{Satz}\label{unique geodesic segments}
For each $A,B \in CV_n$ there exist geodesic segments $g_1, \dots, g_l$, s.t. for each $i$ $g_i$ is the unique asymmetric geodesic joining its two endpoints and $g=g_1 * g_2 * \dots *g_l$ is an asymmetric geodesic from $A$ to $B$.
\end{Satz}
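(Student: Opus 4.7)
The plan is to walk from $A$ to $B$ along the $1$-skeleton of the envelope, regarded as a polyhedral complex. By Lemma \ref{wiggling polytope is polytope} the set $\EnvR(A,B)$ is a polytope in each of its supporting simplices, and by the compactness corollary only finitely many simplices support it, so it carries the structure of a finite polyhedral complex whose facets are cut out by the $(\star)$/$(\star\star)$-inequalities of the defining witnesses. A first observation is that $A$ and $B$ are vertices of this complex: setting $C=A$ in $(\star)$ produces $1 \geq 1$ for every $\omega \in \cand(A)$, so $A$ lies on every hyperplane defining the out-cone $\Env_R^{out}(A,\gamma)$ in $T(A)$ and is its apex; intersecting with the in-envelope only truncates the cone and keeps $A$ as an extreme point. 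The symmetric argument gives the same for $B$.

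The main technical ingredient, which simultaneously is the content of the announced Theorem \ref{theorem:unique_geodesics_are_edges}, is the following rigidity statement: every edge $e$ of the polyhedral complex of a sub-envelope $\EnvR(v,w) \subseteq \EnvR(A,B)$ is the unique asymmetric geodesic from $v$ to $w$, i.e.\ $\EnvR(v,w) = e$. Because $e$ is a straight segment in a single simplex, it is a geodesic by Remark \ref{c2_geodesics_in_CVn}(ii), and the inclusion $e \subseteq \EnvR(v,w)$ is clear. For the reverse direction, the defining $(\star)/(\star\star)$-inequalities of $\EnvR(v,w)$ (with candidates of $T(v)=T(w)$ and basepoints $v,w$) saturate enough along $e$ to force dimension $\leq 1$ inside the simplex containing $e$; an excursion of $\EnvR(v,w)$ into a neighbouring simplex would force the cyclically reduced combinatorics of the maximally stretched witness to change, contradicting Lemma \ref{keep witness}.

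With the rigidity lemma in hand, the decomposition is built inductively. Set $v_0:=A$. Given $v_k\neq B$, both $v_k$ and $B$ are vertices of the sub-envelope $\EnvR(v_k,B) \subseteq \EnvR(A,B)$, and the $1$-skeleton of this polytope is connected; pick any edge $g_{k+1}$ emanating from $v_k$ that lies on a shortest path in the $1$-skeleton to $B$, and let $v_{k+1}$ be its other endpoint. Then $g_{k+1}$ is rigid by the key lemma, while $v_{k+1}\in\EnvR(v_k,B)$ gives $d_R(v_{k+1},B) = d_R(v_k,B) - d_R(v_k,v_{k+1}) < d_R(v_k,B)$, and iterated application of Lemma \ref{glueing geodesics} (using the common candidate witness preserved by Lemma \ref{keep witness}) shows that $g_1*\cdots*g_k$ is a geodesic from $A$ to $v_k$ for every $k$. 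The strict decrease of $d_R(\cdot,B)$ together with the finite combinatorial complexity of the sub-envelopes arising in the process forces termination at some $v_l=B$.

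The main obstacle will be the rigidity statement of the second paragraph — in particular, ruling out the possibility that $\EnvR(v,w)$ escapes into an adjacent simplex along an edge $e$. Once this structural lemma is established, the construction reduces to a straightforward walk on the $1$-skeleton of a finite polyhedral complex, and termination is secured by the strict decrease of $d_R(\cdot, B)$ together with a bound on how many vertices of sub-envelopes can ever occur inside $\EnvR(A,B)$.
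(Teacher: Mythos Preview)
Your overall strategy — walk from $A$ to $B$ along edges of envelopes and show each edge is rigid — is the paper's strategy. But two steps in your outline are genuine gaps rather than details.

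\textbf{The rigidity argument is not correct.} You claim that for an edge $e$ with endpoints $v,w$ the inequalities defining $\EnvR(v,w)$ ``saturate enough along $e$ to force dimension $\leq 1$'' inside the simplex, and that an excursion into an adjacent simplex would contradict Lemma~\ref{keep witness}. Neither part holds. The $(\star)$- and $(\star\star)$-hyperplanes cutting out $\EnvR(v,w)$ are based at $v$ and $w$, not at $A$ and $B$; there is no reason the out-cone $\Env_R^{out}(v,W_R(v,w))$ should be one-dimensional just because $e$ happens to be an extreme ray of the (possibly larger) cone $\Env_R^{out}(v,W_R(v,B))$. And envelopes routinely cross simplex faces — Lemma~\ref{keep witness} preserves \emph{witnesses}, not candidates, so nothing forbids $\EnvR(v,w)$ from entering a neighbouring simplex. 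The mechanism the paper uses (Lemma~\ref{3 Lemma not leaving hyperplanes and unique geodesics}(i)) is different and is what you are missing: a geodesic from $A$ cannot \emph{enter} a $(\star)$-hyperplane and cannot \emph{leave} a $(\star\star)$-hyperplane, because hitting such a hyperplane means acquiring an extra witness, which by Lemma~\ref{keep witness} must then have been a witness all along. One then extends any geodesic $v\to w$ to a geodesic $A\to B$ via Lemma~\ref{glueing geodesics} and traps it in the hyperplanes cutting out $e$.

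\textbf{Termination is not established.} You pass at each step to a new polytope $\EnvR(v_k,B)$, whose $(\star)$-hyperplanes are based at the moving point $v_k$; the vertices you visit therefore do not lie in any fixed finite $1$-skeleton, and ``strict decrease of $d_R(\cdot,B)$ together with finite combinatorial complexity'' does not by itself rule out infinitely many ever-shorter steps. The paper avoids this by not switching envelopes at every edge: from $A_i$ it follows consecutive edges of the single out-envelope $\Env_R^{out}(A_i,W_R(A_i,B))$ until the path first meets a new $(\star\star)$-hyperplane at $A_{i+1}$. That whole run is rigid by Lemma~\ref{3 Lemma not leaving hyperplanes and unique geodesics}(iii), and at $A_{i+1}$ the set of witnesses to $B$ strictly increases inside the finite set $\bigcup_{\Delta\in\supp(\EnvR(A,B))}\cand(\Delta)$, which forces the process to stop.
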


In the proof we will construct the geodesic segments starting from $A$ by moving along edges of envelopes. This yields unique geodesic segments by the following lemma:

\begin{Lem}\label{3 Lemma not leaving hyperplanes and unique geodesics}
\begin{enumerate}[(i)]
\item Let $A,B \in CV_n$ and $A' \in \EnvR(A,B)$ on a hyperplane $H$, which comes from an equality of the form $(\star)$ in lemma \ref{wiggling polytope is polytope}, hence by a candidate $\omega \in \cand(A)$. In other words $H$ is a face of the out-envelope $\Env_R^{out}(A,W_R(A,B))$.

Then by lemma \ref{keep witness} each geodesic $g$ from $A$ to $A'$ must lay completely in $H$, since each point on $g$ has also $\omega$ as a witness from $A$.

The same holds for hyperplanes coming from $(\star \star)$, i.e. faces of the in-envelope of $B$, and geodesics to $B$.

In other words a geodesic from $A$ to $B$ can't enter a $(\star)$-hyperplane or leave a $(\star \star)$-hyperplane.
\item All edges of an envelope $\EnvR(A,B)$, which have as a coarse direction (a subset of) $W_R(A,B)$, are unique geodesic segments. In particular all emanating edges from $A$ and incoming edges to $B$ are unique geodesics.
\item Let $S \subset F_n$, then consecutive edges in $\Env_R^{out}(A,S)$ form rigid geodesics. Similarly consecutive edges in $\Env_R^{in}(B,S)$ form rigid geodesics.
\end{enumerate}
\end{Lem}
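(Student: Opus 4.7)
The statement splits into three claims, and my plan is to prove them in order, leaning on lemma \ref{keep witness} to propagate witnesses along geodesics, the polytope picture from lemma \ref{wiggling polytope is polytope}, and lemma \ref{glueing geodesics} for concatenation.

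Part (i) is essentially immediate from lemma \ref{keep witness}. Fix a witness $\gamma \in W_R(A,B)$; since $A' \in \EnvR(A,B)$, lemma \ref{keep witness} already gives $\gamma \in W_R(A, A')$. The hypothesis that $A'$ lies on the hyperplane $H$ associated to the candidate $\omega \in \cand(A)$ reads $l_{A'}(\omega)/l_A(\omega) = l_{A'}(\gamma)/l_A(\gamma) = \Lambda_R(A, A')$, so $\omega \in W_R(A, A')$ as well. For any geodesic $g$ from $A$ to $A'$ and any intermediate $C$ on $g$, a second application of lemma \ref{keep witness} to $\omega$ yields $\omega \in W_R(A, C)$, which is precisely the defining equality for $C$ to lie on $H$. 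The $(\star\star)$ case is treated symmetrically, applied to the last leg of a geodesic into $B$.

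For part (ii), I would combine (i) with the polytope picture. Take an emanating edge $e$ of $\EnvR(A,B)$ from $A$ ending at some $A'$; as a $1$-dimensional face of the envelope polytope, $e$ arises by tightening a maximal linearly-independent family of the $(\star)$-hyperplanes described in lemma \ref{wiggling polytope is polytope}. Any geodesic $g$ from $A$ to $A'$ sits in $\EnvR(A, A') \subseteq \EnvR(A, B)$ by nested envelopes, and by (i) its image lies on every $(\star)$-hyperplane through $A'$. Since each $(\star)$-hyperplane also passes through $A$ (the defining linear form vanishes at $C = A$), these equalities cut out a linear subspace containing both $A$ and $A'$; intersecting with the convex envelope in the simplex leaves only the edge $e$, so $g$ coincides with $e$. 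Conversely, the straight line along $e$ is a symmetric geodesic by remark \ref{c2_geodesics_in_CVn}(ii), so it is the unique geodesic from $A$ to $A'$. Incoming edges to $B$ follow from the mirror argument using the $(\star\star)$-hyperplanes, and for a general edge of $\EnvR(A,B)$ with coarse direction containing $W_R(A,B)$ one applies both types of confinement simultaneously.

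Part (iii) is obtained by iterating (ii) and using lemma \ref{glueing geodesics}. Consecutive edges $e_1, \dots, e_k$ in $\Env_R^{out}(A, S)$ share vertices $p_0, \dots, p_k$ and lie in the out-envelope, so every $\gamma \in S$ is a witness from $A$ to each $p_i$. The $(\star)$-equalities active on each $e_i$, evaluated at its endpoints, translate into the identity that $\gamma$ is in fact also a witness between consecutive $p_i$'s, so lemma \ref{glueing geodesics} glues the edges into one geodesic. For rigidity, any alternative geodesic $h$ from $p_0$ to $p_k$ lives in $\EnvR(p_0, p_k) \subseteq \Env_R^{out}(A, S)$ and must, by (i), satisfy all the same $(\star)$-equalities, forcing it through every intermediate $p_i$; then (ii) makes $h$ agree with each $e_i$ piecewise. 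I expect the main obstacle to be verifying that elements of $S$ really remain witnesses between consecutive polytope vertices (and not merely witnesses from $A$), which amounts to chasing the triangle equality $d_R(A, p_i) = d_R(A, p_{i-1}) + d_R(p_{i-1}, p_i)$ through the specific linear equalities that define the edges of the out-envelope.
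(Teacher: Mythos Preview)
Your outline tracks the paper's argument closely in parts (i) and (iii), but there is a real gap in your treatment of (ii) for general edges, and the same missing step resurfaces implicitly in (iii).

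For an edge $e$ of $\EnvR(A,B)$ with endpoints $A',B'$ neither equal to $A$ nor $B$, you write that ``one applies both types of confinement simultaneously.'' But part (i) only confines geodesics \emph{from $A$} to $(\star)$-hyperplanes and geodesics \emph{to $B$} to $(\star\star)$-hyperplanes; it says nothing directly about an arbitrary geodesic $g$ from $A'$ to $B'$. The paper's proof fills this by first using the hypothesis on the coarse direction: since some $\gamma\in W_R(A,B)$ is also a witness from $A'$ to $B'$, and since $A',B'\in\EnvR(A,B)$ forces $\gamma\in W_R(A,A')\cap W_R(B',B)$, lemma \ref{glueing geodesics} lets one extend $g$ to a geodesic $\tilde g$ from $A$ to $B$ passing through $A'$ and $B'$. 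Only then does (i) apply to $\tilde g$, forcing it (and hence $g$) into every $(\star)$-hyperplane through $B'$ and every $(\star\star)$-hyperplane through $A'$. Without this extension step your confinement argument does not go through, and your special case of emanating edges (where you can exploit that every $(\star)$-hyperplane passes through $A$) does not generalise.

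The same extension idea is what resolves the ``main obstacle'' you flag at the end of (iii): to apply (i) to a geodesic $h$ from $p_0$ to $p_k$ one again first extends $h$ to a geodesic from $A$ using that $S\subseteq W_R(A,p_0)$, after which the $(\star)$-hyperplane confinement from (i) forces $h$ through each $p_i$ as in the paper's inductive step. Once you insert the extension via lemma \ref{glueing geodesics}, your proof becomes essentially identical to the paper's.
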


\begin{Bew}
(ii): For an edge $e$ with a coarse direction in $W(A,B)$ let $A'$ and $B'$ be the endpoints of $e$ and $g$ any geodesic from $A'$ to $B'$ (for example $e$ itself is such a geodesic). By lemma \ref{glueing geodesics} we can extend $g$ to a geodesic $\tilde{g}$ from $A$ to $B$.

Since $e$ is an edge, we can write it as an intersection of hyperplanes $H_A$ coming from $(\star)$ and hyperplanes $H_B$ from $(\star \star)$. By previous (i) $\tilde{g}$ lies at least until $B'$ in all hyperplanes $H_A$ and from $A'$ to $B$ in all hyperplanes $B'$. In particular $g$ lies in all hyperplanes $H_A$ and $H_B$ and hence is $e$.\\
(iii): Let $e_1, \dots, e_n$ be consecutive edges in $\Env_R^{out}(A,S)$ and $A_0, A_1, \dots, A_n$ their endpoints. As before each geodesic $g$ from $A$ to $A_n$ must be contained in all hyperplanes containing $A_n$, therefore $g$ must contain $e_n$ and hence $A_{n-1}$ lies on $g$. Inductively $g$ has to go through $A_0, \dots, A_n$ over $e_1 * \dots * e_n$ and therefore $e_1 * \dots * e_n$ is the unique geodesic from $A_0$ to $A_n$.
\end{Bew}

\begin{Bew}[of Theorem \ref{unique geodesic segments}]
Let $A,B \in CV_n$ any points. We will now construct $A_i, E_i$ and $g_i$ for $i \in \N$ inductively  starting with $A_0:=A$ and $E_0:=\EnvR(A,B)$:

Starting at $A_i$ choose any consecutive edges $e_1, \dots, e_{k_i}$ in $\Env_R(A_i,B)$ until they hit the first time a new hyperplane coming from an equality of type $(\star \star)$ and denote this point bei $A_{i+1}$. This means that $e_1, \dots, e_{k_i}$ are actually edges of $\Env_R^{out}(A_i,W(A_i,B))$ (more exactly $e_{k_i}$ might be only a part of an edge) and thus by lemma \ref{3 Lemma not leaving hyperplanes and unique geodesics} \linebreak
$g_{i+1}:=e_1 * \dots * e_{k_i}$ is the unique geodesic from $A_i$ to $A_{i+1}$. Since there are only finitely many edges in $\Env_R(A_i,B)$ each such a sequence of edges is finite.

By lemma \ref{3 Lemma not leaving hyperplanes and unique geodesics} we have
\[CW(A_{i+1},B) \subsetneq CW(A_i,B) \subseteq \bigcup_{\substack{\Delta \in \supp(\\ \EnvR(A,B))}} \cand(\Delta)\]
which is a finite set, so previous induction stops after finitely many steps.
\end{Bew}

Since between any two points we can choose a candidate witness, we can fix a point and cover $CV_n$ with out-envelopes. In particular this will be a partition of each simplex into polytopes and each out-envelope can be seen as a face of such a polytope.

\begin{Prop}\label{3 basic properties out-envelopes}
Let $A \in CV_n$, then we have:
\begin{enumerate}[(i)]
\item $CV_n = \bigcup\limits_{\gamma \in \cand(A)} \Env_R^{out}(A,\gamma)$
\item $\{A\}= \bigcap\limits_{\gamma \in \cand(A)} \Env_R^{out}(A,\gamma)$
\item For all $\gamma \in \cand(A)$ the interior $\Env_R^{out}(A,\gamma)^O$ is non-empty.
\item For all sets of witnesses $M \subseteq F_n$ and simplices $\Delta \in \supp(\Env_R^{out}(A,M))$, there exists a subset of candidates $S \subseteq \cand(A)$ such that their out-envelopes are the same in $\Delta$, namely $\Delta \cap \Env_R^{out}(A,M)=\Delta \cap \Env_R^{out}(A,S)$.
\item Let $S_1, S_2 \subseteq F_n$ and $\Delta$ a simplex in $CV_n$, then $\Env_R^{out}(A, S_1) \cap \Env_R^{out}(A, S_2) \cap \Delta$ is a face of $\Env_R^{out}(A, S_1) \cap \Delta$.
\end{enumerate}
\end{Prop}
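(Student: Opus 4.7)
The plan is to work inside a single open simplex $\Delta$ of $CV_n$, where the edge-length parameters $l_C$ are linear coordinates, the ratio $f_\sigma(C) := l_C(\sigma)/l_A(\sigma)$ is a linear function of $C$ for every $\sigma \in F_n$ by the edge-count formula of Lemma \ref{wiggling polytope is polytope}, and $\Lambda_R(A,C) = \max_{\omega \in \cand(A)} f_\omega(C)$ by Theorem \ref{1 candidate is enough}. The five assertions then become convex-geometric statements about the polytopes $\Env_R^{out}(A,S)\cap\Delta = \{C\in\Delta:f_\sigma(C)\geq f_\omega(C)\;\forall\sigma\in S,\omega\in\cand(A)\}$.

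Claim (i) is immediate from Theorem \ref{1 candidate is enough}: for any $B$, the maximum is attained by some $\gamma\in\cand(A)$, so $B\in\Env_R^{out}(A,\gamma)$. For (ii), $A$ trivially lies in every out-envelope; conversely, if $B$ belongs to the intersection then every candidate is stretched by the same factor $\lambda$, and combining the identities $l_B(\gamma)=\lambda\,l_A(\gamma)$ with $\vol(A)=\vol(B)=1$ forces $\lambda=1$, after which the candidate length spectrum determines the point of $CV_n$ and $B=A$. For (iii), given $\gamma\in\cand(A)$ I perturb $A$ within $T(A)$ along a direction that lengthens the edges used by $\gamma$ and shortens the others (renormalised to volume $1$). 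Since distinct candidates have distinct edge-count vectors $\#_\bullet$ and there are finitely many candidates, this perturbation can be tuned so that $f_\gamma$ strictly dominates every other $f_\omega$ at the new point, producing a $CV_n$-open neighbourhood contained in $\Env_R^{out}(A,\gamma)$.

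The main work is (iv). Fix $M$ and $\Delta\in\supp(\Env_R^{out}(A,M))$, set $P := \Delta\cap\Env_R^{out}(A,M)$, pick $C_0 \in \relint(P)$, and take
\[
S := \{\omega\in\cand(A):f_\omega(C_0)=\Lambda_R(A,C_0)\},
\]
i.e.\ the candidate witnesses at $C_0$, which is non-empty by Theorem \ref{1 candidate is enough}. For every $\omega\in S$ and $\mu\in M$ the linear function $f_\mu-f_\omega$ is non-negative on $P$ (there $\mu$ is a witness and $\omega$ a candidate) and vanishes at the relative-interior point $C_0$; being linear and non-negative with an interior zero, it vanishes on the whole affine hull $\mathrm{aff}(P)$, so each $\omega\in S$ is a witness throughout $P$, giving $P\subseteq\Env_R^{out}(A,S)\cap\Delta$. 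For the reverse, take $C\in\Env_R^{out}(A,S)\cap\Delta$; each $\sigma\in S$ is a witness at $C$, so $f_{\sigma_1}(C)=f_{\sigma_2}(C)$ for all $\sigma_1,\sigma_2\in S$, placing $C$ in $L_S:=\bigcap_{\sigma_1,\sigma_2\in S}\{f_{\sigma_1}=f_{\sigma_2}\}$. The key step is to show $L_S=\mathrm{aff}(P)$, so that the vanishing of $f_\mu-f_\sigma$ on $\mathrm{aff}(P)$ extends to $C$, making every $\mu\in M$ a witness at $C$ and placing $C\in P$. I expect the main obstacle to be exactly this equality $L_S=\mathrm{aff}(P)$: one verifies it by decomposing the cyclically reduced loop representing $\mu$ into its simple-loop (hence candidate) pieces and observing that $f_\mu$ becomes a positive convex combination of the corresponding $f_\sigma$ with $\sigma\in S$, so that every defining equation of $\mathrm{aff}(P)$ is a consequence of the defining equations of $L_S$.

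Finally (v) follows from (iv). By Remark \ref{3 remark basic properties out-envelopes}(iii), $\Env_R^{out}(A,S_1)\cap\Env_R^{out}(A,S_2)=\Env_R^{out}(A,S_1\cup S_2)$, and (iv) inside $\Delta$ lets us replace $S_1$ and $S_1\cup S_2$ by candidate sets $S_1'\subseteq S_3'\subseteq\cand(A)$. For every $\tau\in S_3'\setminus S_1'$ and any fixed $\sigma\in S_1'$ the inequality $f_\sigma\geq f_\tau$ holds throughout $\Env_R^{out}(A,S_1')\cap\Delta$ (since $\sigma$ is then a witness and $\tau$ a candidate), so the hyperplane $\{f_\sigma=f_\tau\}$ is supporting. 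Intersecting with all such hyperplanes realises $\Env_R^{out}(A,S_1\cup S_2)\cap\Delta$ as an iterated face of $\Env_R^{out}(A,S_1)\cap\Delta$, hence itself as a face, as required.
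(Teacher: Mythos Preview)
Parts (i) and (v) are fine and match the paper. The other three parts each have a genuine gap.

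\textbf{(ii).} Your two-step argument is circular: since $\lambda=\Lambda_R(A,B)$, the assertion $\lambda=1$ is exactly $d_R(A,B)=0$, which is already $A=B$. The volume identity does not help, because the candidates of $A$ are not edges of $B$ and their $B$-lengths carry no direct information about $\vol(B)$. The paper proceeds differently: it first reduces to $B$ in a small neighbourhood of $A$ (any geodesic from $A$ to $B$ lies in the intersection, so it suffices to treat nearby $B$), then picks a candidate witness $\gamma\in CW(B,A)$ in the \emph{reverse} direction, argues combinatorially that $\gamma$ must already be a candidate of $A$, and concludes that $\gamma$ is maximally stretched both ways, forcing $d(A,B)=0$.

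\textbf{(iii).} The claim ``distinct candidates have distinct edge-count vectors $\#_\bullet$'' is false: a barbell $xy$ and its flip $xy^{-1}$ (and likewise the two figure-of-eights $xy,\,xy^{-1}$ at a rose vertex) have identical edge-count vectors in $A$, so $f_{xy}\equiv f_{xy^{-1}}$ on all of $T(A)$ and no perturbation inside $T(A)$ makes $f_\gamma$ strictly dominant. You also do not treat the case where $A$ lies in a non-maximal simplex, so $T(A)$ is not open in $CV_n$. The paper handles both issues by an explicit construction: for a barbell it shrinks the two circles and stretches the handle, and for non-maximal $A$ it first relaxes a high-valence vertex along $\gamma$ (turning a figure-of-eight into a barbell if necessary) to land in a maximal simplex.

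\textbf{(iv).} Your forward inclusion is exactly the paper's. For the reverse inclusion, the target $L_S=\mathrm{aff}(P)$ is in fact correct, but your proposed justification---decomposing the loop $\mu$ into candidate pieces so that $f_\mu$ becomes a convex combination of $f_\sigma$ with $\sigma\in S$---does not work: lengths of concatenated loops need not add (there may be cancellation in $A$ or in $C$), and there is no reason the pieces should land in $S$. The paper's route is a short contradiction: if some small $w$ in the direction space of $\Env_R^{out}(A,S)\cap\Delta$ has $C_0+w\notin P$, then $f_\gamma(C_0+w)<f_\sigma(C_0+w)$ for all $\sigma\in S$, so by linearity $f_\gamma(C_0-w)>f_\sigma(C_0-w)$; for $|w|$ small the strict inequalities $f_\gamma(C_0)>f_\omega(C_0)$ for $\omega\notin S$ persist at $C_0-w$ (this is Lemma~\ref{lem:looseCWinNeighbourhood}), so $f_\gamma$ strictly dominates every candidate at $C_0-w$, contradicting the existence of a candidate witness there.
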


\begin{Bew}
\begin{enumerate}[(i)]
\item follows directly from the fact, that for each $B \in CV_n$ there exists a candidate witness $\gamma \in CW(A,B)$.\\
\item Since for each $B \in \Env_R^{out}(A, S)$ there exists a geodesic $g \subseteq \Env_R^{out}(A, S)$ from $A$ to $B$ it is enough to show the statement for any neighbourhood of $A$. 

Let now $B \in \bigcap\limits_{\gamma \in \cand(A)} \Env_R^{out}(A,\gamma) \cap U$ for a small enough neighbourhood $U$ of $A$ and $\gamma \in CW(B,A)$.
Assume $\gamma$ is not a candidate of $A$. If we choose $U$ enough small such we have that the topological type of $A$ is the same of $B$ up to contracting some edges. This means that $\gamma$ is of the form:

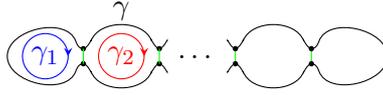
\begin{figure}[h]
\center
\begin{tikzpicture}
\node at (0.5,0.6) {$\gamma$};
\fill(0,0.1) circle[radius=1pt];
\fill (1,0.1) circle[radius=1pt];
\draw (0,0.1) to[out=180, in=-70] (-0.1,0.2) to[out=110, in=-270] (-1,0);
\draw (0,0.1) to[out=0, in=70] (0.1,0.2) to[out=70, in=110] (0.9,0.2) to[out=110, in=180] (1,0.1) to[out=0, in=70] (1.1,0.2);
\draw[green] (0,0.1) -- (0,0);
\draw[green] (1,0.1) -- (1,0);
\begin{scope}[yscale=-1]
\fill(0,0.1) circle[radius=1pt];
\fill (1,0.1) circle[radius=1pt];
\draw (0,0.1) to[out=180, in=-70] (-0.1,0.2) to[out=110, in=-270] (-1,0);
\draw (0,0.1) to[out=0, in=70] (0.1,0.2) to[out=70, in=110] (0.9,0.2) to[out=110, in=180] (1,0.1) to[out=0, in=70] (1.1,0.2);
\draw[green] (0,0.1) -- (0,0);
\draw[green] (1,0.1) -- (1,0);
\end{scope}

\node at (1.5,0) {$\dots$};

\begin{scope}[shift={(3,0)}, xscale=-1]
\draw[green] (0,0.1) -- (0,0);
\draw[green] (1,0.1) -- (1,0);
\fill(0,0.1) circle[radius=1pt];
\fill (1,0.1) circle[radius=1pt];
\draw (0,0.1) to[out=180, in=-70] (-0.1,0.2) to[out=110, in=-270] (-1,0);
\draw (0,0.1) to[out=0, in=70] (0.1,0.2) to[out=70, in=110] (0.9,0.2) to[out=110, in=180] (1,0.1) to[out=0, in=70] (1.1,0.2);

\begin{scope}[yscale=-1]
\draw[green] (0,0.1) -- (0,0);
\draw[green] (1,0.1) -- (1,0);
\fill(0,0.1) circle[radius=1pt];
\fill (1,0.1) circle[radius=1pt];
\draw (0,0.1) to[out=180, in=-70] (-0.1,0.2) to[out=110, in=-270] (-1,0);
\draw (0,0.1) to[out=0, in=70] (0.1,0.2) to[out=70, in=110] (0.9,0.2) to[out=110, in=180] (1,0.1) to[out=0, in=70] (1.1,0.2);

\end{scope}
\end{scope}

\draw[red,postaction={decorate,decoration={
		markings, mark=at position .5 with {\arrow{stealth}} }}] (0.2,0) arc (180:-180:0.3);
\node[red] at (0.5,0) {$\gamma_2$};
\draw[blue,postaction={decorate,decoration={
		markings, mark=at position .5 with {\arrow{stealth}} }}] (-0.8,0) arc (180:-180:0.3);
\node[blue] at (-0.5,0) {$\gamma_1$};
\end{tikzpicture}
\caption{A new candidate close to face.}
\end{figure}

where the small green edges between the vertices are collapsed in $A$ and we might hide a barbell handle in the dots. But then $\gamma_1 \gamma_2 \in \cand(A)$ would be less stretched from $A$ to $B$ than $\gamma_1 \gamma_2^{-1}$ which contradicts that all candidates of $A$ are witnesses from $A$ to $B$.

If $B \in U \cap \Env_R^{out}(A, \cand(A))$ there exists a candidate maximally stretched from $B$ to $A$, which is also maximally stretched from $A$ to $B$ by assumption, hence we get $A=B$.
\item We will construct an open set contained in $\Env_R^{out}(A,\gamma)$.\\
If $A$ is in a maximal simplex, let $\varepsilon >0$ be smaller than each edge length of $A$ and $\gamma \in \cand(A)$. Let $B$ be any element in $CV_n$ obtained from $A$ by changing the edge lenthgs in the following way:

\begin{itemize}
\item Each edge not contained in $\gamma$ is shrinked by more than $\varepsilon$, i.e. $l_B(e) < l_A(e)-\varepsilon$.
\item If $\gamma$ is a simple loop and $n_\gamma$ is the number of edges in $\gamma$ each edge contained in $\gamma$ is stretched at most $\varepsilon/ n_\gamma$, i.e. $l_A(e) < l_B(e) < l_A(e)+\varepsilon/ n_\gamma$.
\item If $\gamma$ is a barbell, let denote $\gamma_1, \gamma_2$ the two circles of $\gamma$ and $\alpha$ the barbell handle as edge paths, i.e. looking at $\gamma$ as a sequence of edges we have $\gamma = \gamma_1 * \alpha * \gamma_2 * \alpha^{-1}$. Then each edge contained in one of the two circles is shrinked and the circles are shrinked by at most $\varepsilon/2$, i.e. $l_A(\gamma_i) - \varepsilon/2 < l_B(\gamma_i)$. For $n_\alpha$ the number of edges in $\alpha$ stretch each edge contained in $\alpha$  less than $\varepsilon/ n_\alpha$ such that $\alpha$ is stretched more than $\varepsilon/2$.
\end{itemize}

It is now easy to check, that $\gamma$ is the only and hence maximally stretched path from $A$ to $B$ and hence $B \in CV_n(A, \gamma)$ holds. Moreover the set of such $B$ is open, hence the claim follows.

Let now $A$ be not in a maximal simplex, then as before we can construct $A' \in \Env_R^{out}(A, \gamma)$ with $\gamma \in \cand(A')$ by relaxing vertices of valency bigger than 3 with $k$ edges of length $\varepsilon/k$ along $\gamma$. We only need to take care if $\gamma$ is a figure of eight to relax it into a barbell to get also $\gamma \in \cand(A')$. Since $\Env_R^{out}(A, \gamma) \subseteq \Env_R^{out}(A', \gamma)$ the claim follows.

\item By remark \ref{3 remark basic properties out-envelopes} (iii) we can assume wlog. that $M=\{\gamma\}$. Let $B$ a point in the relative interior of the polytope $\relint(\Env_R(A, \gamma) \cap \Delta)$. For $S := CW(A,B)$ we claim, that $\Env_R^{out}(A, \gamma) \cap \Delta = \Env_R^{out}(A,S) \cap \Delta$.

Let $B+V$ be the affine subspace spanned by $\Env_R^{out}(A, \gamma) \cap \Delta$ in $\Delta$ and $v \in V$ small enough such that $B+v, B-v \in \Env_R^{out}(A, \gamma) \cap \Delta$ holds.\\
For $\omega \in S$ the inequality in $(\star)$ is an equality for $B$, i.e. we have $\frac{l_B(\gamma)}{l_A(\gamma)} = \frac{l_B(\omega)}{l_A(\omega)}$. Since $B+v$ and $B-v$ are in $\Env_R^{out}(A, \gamma)$ the equality also holds for $B\pm v$ else $\omega$ would be more stretched than $\gamma$ for either $B+v$ or $B-v$. Thus the equality holds for all $B+V$, in other words for all $\omega \in S$ and $C \in B+V \supseteq \Env_R^{out}(A, \gamma)$ we have $\frac{l_C(\gamma)}{l_A(\gamma)} = \frac{l_C(\omega)}{l_A(\omega)}$ and hence $\Env_R^{out}(A, \gamma) \cap \Delta \subseteq \Env_R^{out}(A, S)$.

Let now $B+W$ be the affine subspace spanned by $\Env_R^{out}(A, S) \cap \Delta$ in $\Delta$ and $w \in W$ small enough.

If $C^{+}:=B+w \in (\Env_R^{out}(A, S) \setminus \Env_R^{out}(A, \gamma)) \cap \Delta$ then we have $\frac{l_{C^{+}}(\gamma)}{l_A(\gamma)} < \frac{l_{C^{+}}(\omega)}{l_A(\omega)}$ for all $\omega \in S$ and hence for $C^{-}:=B-w$ by $(\star)$ we get $\frac{l_{C^{-}}(\gamma)}{l_A(\gamma)} > \frac{l_{C^{-}}(\omega)}{l_A(\omega)}$. But if we choose $|w|$ small enough this also holds for all $\omega \in \cand(A) \setminus S$ (s. lemma \ref{lem:looseCWinNeighbourhood}), but this contradicts $CW(A,C^-) \neq \emptyset$.

\item Follows directly from (iv) and the inequalities $(\star)$ in lemma \ref{wiggling polytope is polytope}, since the defining half spaces of the polytope come from the candidates.
\end{enumerate}
\end{Bew}

\begin{Bem}
Statement (iii) doesn't hold in the reduced Outer Space, since we might not be able to relax a figure of eight into a barbell. For example in $CV_2^{red}$ the rose $A$ and $\gamma$ a figure of eight in $A$ has as outgoing envelope a unique geodesic.
\end{Bem}

We get similar results for the in-envelopes:

\begin{Prop}\label{3 basic properties in-envelopes}
Let $A \in CV_n$ and $S :=\{\gamma \in F_n \ | \ \gamma \text{ can be extended to a free generating set of } F_n\}=\bigcup_{A \in CV_n} \cand(A)$ the set of possible candidates, then we have:
\begin{enumerate}[(i)]
\item $CV_n = \bigcup\limits_{\gamma \in S} \Env_R^{in}(A,\gamma)$
\item $\{A\} = \bigcap\limits_{\gamma \in \cand(A)} \Env_R^{in}(A,\gamma)$
\item For all $\gamma \in S$ the interior $\Env_R^{in}(A,\gamma)^O$ is non-empty.
\item For all sets of witnesses $M \subseteq F_n$ and simplices $\Delta \in \supp(\Env_R^{out}(A,M))$, there exists a subset of candidates $S' \subseteq \cand(A)$ such that $\Delta \cap \Env_R^{in}(A,M)=\Delta \cap \Env_R^{in}(A,S')$.
\item Let $S_1, S_2 \subseteq F_n$ and $\Delta$ a simplex in $CV_n$, then $\Env_R^{in}(A, S_1) \cap \Env_R^{in}(A, S_2) \cap \Delta$ is a face of $\Env_R^{in}(A, S_1) \cap \Delta$.
\end{enumerate}
\end{Prop}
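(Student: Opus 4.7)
The plan is to mirror the proof of Proposition \ref{3 basic properties out-envelopes}, exploiting the duality $B \in \Env_R^{in}(A,\gamma) \iff \gamma \in W_R(B,A)$, which makes each item symmetric to its out-envelope counterpart up to swapping source and target of the witness relation. For (i), any $B \in CV_n$ admits by Theorem \ref{1 candidate is enough} a candidate witness $\gamma \in \cand(B) \cap W_R(B,A)$, and $\cand(B) \subseteq S$ by definition of $S$, so $B \in \Env_R^{in}(A,\gamma)$. For (ii), if $B$ lies in the intersection then every $\gamma \in \cand(A)$ is in $W_R(B,A)$; applying Theorem \ref{1 candidate is enough} once more in the direction $A \to B$ yields $\omega \in \cand(A) \cap W_R(A,B)$, and by hypothesis this same $\omega$ also lies in $W_R(B,A)$. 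For volume-normalised graphs this forces $\Lambda_R(A,B) \cdot \Lambda_R(B,A) = 1$, hence $d_R(A,B) + d_R(B,A) = 0$, and since $d_R$ is non-negative on normalised graphs we conclude $A = B$.

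For (iii), given $\gamma \in S$ I would choose a graph $B_0 \in CV_n$ with $\gamma \in \cand(B_0)$, which exists by definition of $S$, and work inside the open simplex $T(B_0)$. There the condition $\gamma \in W_R(B,A)$ is encoded by the finitely many strict inequalities $l_A(\gamma)/l_B(\gamma) > l_A(\omega)/l_B(\omega)$ ranging over $\omega \in \cand(T(B_0)) \setminus \{\gamma\}$. To produce a simultaneous solution, I would shrink the edges of $\gamma$ in $B_0$ uniformly by a factor $t$ and rescale the remaining edges to preserve unit volume. As $t \to 0$ the quotient $l_A(\gamma)/l_B(\gamma)$ blows up, while each $l_A(\omega)/l_B(\omega)$ stays bounded because every competing candidate $\omega$ contains at least one edge outside $\gamma$ whose length is bounded below. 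Continuity then yields an open neighbourhood of the resulting $B$ contained in $\Env_R^{in}(A,\gamma)$. If $T(B_0)$ is not maximal, one additionally relaxes a vertex along $\gamma$, as in the out-envelope proof, to land in a top-dimensional simplex.

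For (iv) and (v) the argument is formally identical to the out-envelope case: the half-spaces defining $\Env_R^{in}(A,M) \cap \Delta$ are the type $(\star\star)$ inequalities of Lemma \ref{wiggling polytope is polytope}, which are parametrised by candidates of the varying point, namely by $\cand(\Delta)$. The affine-subspace and perturbation argument of Proposition \ref{3 basic properties out-envelopes}(iv), together with Lemma \ref{lem:looseCWinNeighbourhood}, transfers verbatim, and (v) then follows from (iv) together with the polytope description already recorded in Remark \ref{3 remark basic properties out-envelopes}(ii). The main obstacle is step (iii): one must check that for every $\omega \in \cand(T(B_0)) \setminus \{\gamma\}$ there really is an edge outside $\gamma$ to exploit and, when $\gamma$ is a figure of eight, that a relaxation into a barbell is available. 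It is precisely this that forces the hypothesis $\gamma \in S$ rather than only $\gamma \in \cand(A)$, and it is also the reason the analogous statement fails in $CV_n^{red}$, as noted in the remark preceding the proposition.
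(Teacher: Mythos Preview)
Your treatment of (i), (ii), (iv) and (v) is correct and aligns with the paper, which for those parts simply refers back to Proposition~\ref{3 basic properties out-envelopes}. Your argument for (ii) is in fact tidier than a literal mirror of the out-envelope proof: because the hypothesis gives $\cand(A) \subseteq W_R(B,A)$ and Theorem~\ref{1 candidate is enough} always supplies a witness in $\cand(A)$ for the direction $A \to B$, you obtain the same element witnessing both directions immediately, without the ``new candidates close to a face'' detour that the out-envelope version required.

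The genuine divergence is in (iii). The paper does \emph{not} mimic the out-envelope construction; instead it writes down an explicit \emph{rainbow graph} built from a free basis $(\gamma,\gamma_2,\dots,\gamma_n)$ extending $\gamma$, in which $\gamma$ is realised as a short simple loop while every other candidate has length at least $2$, so that for $\varepsilon$ small one gets an open set of such graphs inside $\Env_R^{in}(A,\gamma)$. Your proposed approach---pick any $B_0$ with $\gamma \in \cand(B_0)$ and shrink the edges of $\gamma$---has a real gap exactly where you flag it: the assertion ``every competing candidate $\omega$ contains at least one edge outside $\gamma$'' fails whenever $\gamma$ is realised in $B_0$ as a figure of eight or a barbell, since the constituent circles of $\gamma$ are then candidates supported entirely on the edges of $\gamma$, and their stretch ratios scale by the same factor $1/t$ as that of $\gamma$ under your shrinking. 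The fix is to insist that $B_0$ realise $\gamma$ as a \emph{simple loop}---for instance take the rose on a basis extending $\gamma$ and then relax, or go directly to the paper's rainbow graph---after which your argument goes through. So your strategy is salvageable, but as written it leaves the acknowledged obstacle unresolved, and the paper's explicit construction is precisely what closes it.
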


\begin{Bew}
All statements except (iii) are proven as in propostion \ref{3 basic properties out-envelopes}. 

For (iii) let $\gamma \in S$ and $(\gamma, \gamma_2, \dots, \gamma_n)$ a generating set of $F_n$. Consider the marked rainbow graph $B$

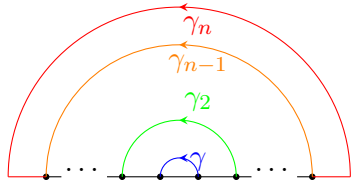
\begin{figure}[h]
\center
\begin{tikzpicture}
\filldraw (-2,0) circle[radius=1pt];
\node at (-1.5,0.1) {$\dots$};
\filldraw (-1,0) circle[radius=1pt];
\filldraw (-0.5,0) circle[radius=1pt];
\filldraw (0,0) circle[radius=1pt];
\filldraw (0.5,0) circle[radius=1pt];
\node at (1,0.1) {$\dots$};
\filldraw (1.5,0) circle[radius=1pt];
\draw[red] (-2.5,0) -- (-2,0);
\draw (-2,0) -- (-1.8,0);
\draw (-1.2,0) -- (0.7,0);
\draw[red] (1.5,0) -- (2,0);
\draw (1.3,0) -- (1.5,0);

\draw[red, postaction={decorate,decoration={
		markings, mark=at position .5 with {\arrow{stealth}} }}] (2,0) arc (0:180:9/4);
\node[red] at (0,2) {$\gamma_n$};
\draw[orange, postaction={decorate,decoration={
		markings, mark=at position .5 with {\arrow{stealth}} }}] (1.5,0) arc (0:180:7/4);
\node[orange] at (0,1.5) {$\gamma_{n-1}$};
\draw[green, postaction={decorate,decoration={
		markings, mark=at position .5 with {\arrow{stealth}} }}] (0.5,0) arc (0:180:3/4);
\node[green] at (0,1) {$\gamma_2$};
\draw[blue, postaction={decorate,decoration={
		markings, mark=at position .5 with {\arrow{stealth}} }}] (0,0) arc (0:180:1/4);
\node[blue] at (0,0.2) {$\gamma$};
\end{tikzpicture}
\caption{marked rainbow graph}
\end{figure}

where the edges belonging to $\gamma$ have length $< \varepsilon$ and all the other edges length $>1$. Hence all candidates of $B$ except $\gamma$ have at least length 2 and $\gamma$ has at most length $2 \varepsilon$. Since there are only finitely many candidates and there must be a witness candidate, we can choose $\varepsilon$ small enough, such that the rainbow graphs which satisfy the length conditions lay in $\Env^{in}_R(A, \gamma)$. These rainbow graphs form an open set.
\end{Bew}

Since the out- and in-going envelopes are parametrized by the inequalities $(\star)$ and $(\star \star)$ in lemma \ref{wiggling polytope is polytope} we get that continuously varying $A$ we continuously vary the out- and in-envelopes in each simplex as long as we don't change the candidates and the envelope stays in the simplex. In particular for $A,B \in CV_n$ in maximal simplices we can always choose neighbourhoods small enough, that we don't get intersections with new envelopes, i.e. we don't get additional candidate witnesses as the next lemma states.

\begin{Lem}\label{lem:looseCWinNeighbourhood}
Let $A,B \in CV_n$ and $S:=\{\gamma \in \cand(\Delta) \ | \ \Delta \text{ is a simplex in } CV_n \text{ and } A \in \overline{\Delta}\}$ the candidates close to $A$.

Then there exist neighbourhoods $U_A \ni A$ and $U_B \ni B$ such that for all $A' \in U_A$ and $B' \in U_B$ we have $W_R(A',B') \cap S \subseteq W_R(A,B) \cap S$.

In particular if $A$ lies in a maximal simplex we get $CW(A',B')\subseteq CW(A,B)$.
\end{Lem}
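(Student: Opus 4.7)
The plan is to combine Theorem \ref{1 candidate is enough} (witnesses are detected by candidates of $A$) with continuity of the length functions in the edge weights. Specifically, $\gamma \notin W_R(A,B)$ is equivalent to the existence of some $\omega \in \cand(A)$ satisfying the strict inequality $l_A(\omega) \cdot l_B(\gamma) < l_A(\gamma) \cdot l_B(\omega)$, and such strict inequalities persist under small perturbations of $A$ and $B$.

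First I would observe that $S$ is finite: the simplices $\Delta$ with $A \in \overline{\Delta}$ are exactly those having $T(A)$ as a (possibly improper) face, and there are only finitely many such in the locally finite complex $CV_n$, each contributing finitely many candidates. Next, for each of the finitely many $\gamma \in S \setminus W_R(A,B)$, pick a corresponding $\omega_\gamma \in \cand(A)$ exhibiting the strict inequality above. Since $l_X(\alpha)$ is piecewise linear and in particular continuous in $X \in CV_n$, there exist neighborhoods $U_A^\gamma \ni A$ and $U_B^\gamma \ni B$ on which $l_{A'}(\omega_\gamma) \cdot l_{B'}(\gamma) < l_{A'}(\gamma) \cdot l_{B'}(\omega_\gamma)$ continues to hold. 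Theorem \ref{1 candidate is enough} then yields $\Lambda_R(A',B') \geq l_{B'}(\omega_\gamma)/l_{A'}(\omega_\gamma) > l_{B'}(\gamma)/l_{A'}(\gamma)$, so $\gamma \notin W_R(A',B')$. Intersecting the finitely many $U_A^\gamma$ and $U_B^\gamma$ produces the desired $U_A, U_B$ satisfying $W_R(A',B') \cap S \subseteq W_R(A,B) \cap S$.

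The potential obstacle is that $\omega_\gamma$ need not remain a candidate of $A'$ when $A'$ sits in a different simplex than $A$; this is sidestepped because the bound $\Lambda_R(A',B') \geq l_{B'}(\alpha)/l_{A'}(\alpha)$ from Theorem \ref{1 candidate is enough} holds for every $\alpha \in F_n$, regardless of whether $\alpha$ is a candidate of $A'$. So $\omega_\gamma$ can be used as a test element even across the simplex boundary.

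For the specialization, if $T(A)$ is maximal then no simplex has $T(A)$ as a proper face, so $S = \cand(T(A)) = \cand(A)$. Shrinking $U_A$ to lie entirely in the open simplex $T(A)$ gives $\cand(A') = \cand(A) = S$ for every $A' \in U_A$, whence $CW(A',B') = \cand(A') \cap W_R(A',B') \subseteq S \cap W_R(A',B') \subseteq \cand(A) \cap W_R(A,B) = CW(A,B)$.
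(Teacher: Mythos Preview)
Your proof is correct and follows essentially the same strategy as the paper: both arguments exploit that membership in $W_R(A,B)$ is characterized by finitely many linear inequalities being equalities (the $(\star)$-inequalities of Lemma~\ref{wiggling polytope is polytope}), so any strict inequality persists under small perturbations of $A$ and $B$, and finiteness of $S$ lets one intersect the resulting neighbourhoods. The paper fixes a single witness $\gamma \in W_R(A,B)$ as the universal comparison element, whereas you pick a separate $\omega_\gamma \in \cand(A)$ for each non-witness $\gamma$; this is a cosmetic difference, and your observation that $\omega_\gamma$ need not remain a candidate of $A'$ (because the supremum in Theorem~\ref{1 candidate is enough} ranges over all of $F_n$) is exactly the point.
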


\begin{proof}
Considering the $(\star)$-inequalities of lemma \ref{wiggling polytope is polytope} and a $\gamma \in W_R(A,B)$ we have $\omega \in W_R(A,B)$ if and only if the $(\star)$-equality holds for $\omega$. We can choose $U_A$ small enough, such that $\cand(A')\subseteq S$ for all $A' \in U_A$ and so we can consider the finite set $S$ instead of $\cand(A')$ in the $(\star)$-inequalities. Varying $A$ continuously varies the coefficients of $l_B(e_i)$ in the $(\star)$-inequality and since $S$ is finite we can choose $U_A$ and $U_B$ in such a matter, that if for any $\omega \in S$ the sum is strictly greater than 0, it stays so for all $A' \in U_A$ and $B' \in U_B$. Therefore such $\omega$ is less stretched from $A'$ and $B'$ than $\gamma$, hence we don't gain any new witnesses from $S$ and the claim follows.
\end{proof}

While lemma \ref{3 Lemma not leaving hyperplanes and unique geodesics} (iii) tells us, that consecutive edges of out- and in-envelope are rigid geodesics, we will see that in fact all rigid geodesics are of that form. To see that we will need the following lemma.

\begin{Lem}\label{lem:interior_of_simplex_implies_many_geodesics}
Let $\bar{\Delta} \subset CV_n$ be a closed simplex in $CV_n$ and $A \in \bar{\Delta}, S \subseteq F_n$ such that $\mathcal{E}:=\Env_R^{out}(A,S)\cap \bar{\Delta}$ has dimension $k$.

Then for each relative interior point $B \in \relint(\Env_R^{out}(A,S))\cap \bar{\Delta}$ we have \linebreak
$\dim(\EnvR(A,B)\cap \bar{\Delta})=k $. In particular for $k\geq 2$ the geodesics from $A$ to $B$ is not unique.

The analogue statement holds for in-envelopes.
\end{Lem}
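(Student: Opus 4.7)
The plan is to sandwich $\dim(\EnvR(A,B)\cap\bar{\Delta})$ between $k$ and $k$. The upper bound is immediate: since $B\in\Env_R^{out}(A,S)$ one has $S\subseteq W_R(A,B)$, and by Lemma~\ref{keep witness} any point $C$ on a geodesic from $A$ to $B$ satisfies $S\subseteq W_R(A,C)$. Hence $\EnvR(A,B)\cap\bar{\Delta}\subseteq\mathcal{E}$, forcing the dimension to be at most $k$.

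For the reverse inequality I would fix a witness $\gamma_0\in S$ and use Remark~\ref{3 remark basic properties out-envelopes}(i) to write $\EnvR(A,B)\cap\bar\Delta=\Env_R^{out}(A,\gamma_0)\cap\Env_R^{in}(B,\gamma_0)\cap\bar\Delta$, so that the envelope is cut out in $\bar\Delta$ by the $(\star)$- and $(\star\star)$-half-spaces of Lemma~\ref{wiggling polytope is polytope}. By Remark~\ref{c2_geodesics_in_CVn}(ii) the straight segment $C_t:=(1-t)A+tB$, $t\in[0,1]$, is a symmetric geodesic and hence lies entirely in $\EnvR(A,B)\cap\bar\Delta$. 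A direct linear computation along this segment yields, for $0<t<1$,
\[\psi^{\mathrm{out}}_\omega(C_t)=(1-t)\,g_\omega(B),\qquad \psi^{\mathrm{in}}_\omega(C_t)=t\,g_\omega(B),\]
where $g_\omega(B):=l_B(\gamma_0)\,l_A(\omega)-l_B(\omega)\,l_A(\gamma_0)\geq 0$ vanishes precisely when $\omega\in W_R(A,B)$. Proposition~\ref{3 basic properties out-envelopes}(iv) applied at the relative interior point $B$ lets us assume $S=CW(A,B)\subseteq\cand(A)$, so that the affine hull $V$ of $\mathcal{E}$ in $\bar\Delta$ is the zero locus of $\{\psi^{\mathrm{out}}_\omega:\omega\in S\}$. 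Substituting the identity $l_B(\omega)=\Lambda_R(A,B)\,l_A(\omega)$, valid for $\omega\in W_R(A,B)$, into $\psi^{\mathrm{in}}_\omega$ shows that every $(\star\star)$-equality indexed by $\omega\in S$ is already implied on $V$ by the corresponding $(\star)$-equality. Consequently the active constraints of $\EnvR(A,B)\cap\bar\Delta$ at $C_t$ cut out no more than $V$, so $C_t\in\relint(\EnvR(A,B)\cap V)$, and $\dim(\EnvR(A,B)\cap\bar\Delta)=\dim V=k$.

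The main obstacle is ensuring that no additional $(\star\star)$-equalities beyond those indexed by $\omega\in S$ become active at $C_t$. This is automatic when $A\in\Delta$, where $\cand(A)=\cand(\Delta)$; when $A$ sits in a proper face of $\bar\Delta$ one must additionally rule out candidates $\omega\in\cand(\Delta)\cap W_R(A,B)\setminus\cand(A)$. Such $\omega$ can be controlled using Theorem~\ref{1 candidate is enough}: since the supremum defining $\Lambda_R(A,\cdot)$ is already realized on $\cand(A)$, the $(\star\star)$-constraint for $\omega$ is forced by those for $\omega'\in S$ and therefore also vanishes on $V$. The in-particular statement about non-uniqueness of geodesics from $A$ to $B$ for $k\geq 2$ is then immediate, as a $k$-dimensional envelope with $k\geq 2$ cannot be exhausted by a single rectifiable path. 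The analogue for in-envelopes is proven by the symmetric argument, exchanging the roles of $(\star)$ and $(\star\star)$ and of $A$ and $B$.
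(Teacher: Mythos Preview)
Your approach is genuinely different from the paper's. The paper argues structurally: it sets $S'=W_R(A,B)$ and $\mathcal{E}'=\Env_R^{out}(A,S')\cap\bar\Delta$, invokes Proposition~\ref{3 basic properties out-envelopes}(v) to see that $\mathcal{E}'$ is a face of $\mathcal{E}$, and concludes $\mathcal{E}'=\mathcal{E}$ because $\mathcal{E}'$ contains the relative interior point $B$. It then uses Lemma~\ref{lem:looseCWinNeighbourhood} to show that an entire neighbourhood of $A$ inside $\mathcal{E}$ already lies on geodesics from $A$ to $B$. Your direct linear-algebraic computation along the segment $[A,B]$ is more elementary and bypasses both of those auxiliary results; in return the paper's route makes the equality $\mathcal{E}'=\mathcal{E}$ explicit, which is convenient elsewhere.

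Two remarks. First, a harmless typo: a direct check gives $\psi^{\mathrm{out}}_\omega(C_t)=t\,g_\omega(B)$ and $\psi^{\mathrm{in}}_\omega(C_t)=(1-t)\,g_\omega(B)$, the opposite of what you wrote; nothing in the argument is affected.

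Second, and more substantively, your treatment of the boundary case $A\in\bar\Delta\setminus\Delta$ has a gap. For $\omega\in\cand(\Delta)\cap W_R(A,B)\setminus\cand(A)$ you assert that the $(\star\star)$-equality ``is forced by those for $\omega'\in S$ and therefore also vanishes on $V$'', citing Theorem~\ref{1 candidate is enough}. But that theorem only yields the \emph{inequality} $\psi^{\mathrm{out}}_\omega(C)\geq 0$ for every $C$; it does not by itself place $\psi^{\mathrm{out}}_\omega$ (and hence $\psi^{\mathrm{in}}_\omega=-\Lambda_R(A,B)\,\psi^{\mathrm{out}}_\omega$) in the linear span of $\{\psi^{\mathrm{out}}_{\omega'}:\omega'\in S\}$. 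The missing observation is this: since $B\in\relint(\mathcal{E})$ and $A\in\mathcal{E}$, standard convex geometry gives $C_t\in\relint(\mathcal{E})$ for every $0<t\le 1$; now $\psi^{\mathrm{out}}_\omega$ is linear, nonnegative on $\mathcal{E}$, and vanishes at the interior point $C_t$, hence it vanishes on the entire affine hull $V$. With this one extra sentence your argument is complete.
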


\begin{proof}
Let $B \in \relint(\Env_R^{out}(A,S))\cap \bar{\Delta}, S':=W_R(A,B)$ and $\mathcal{E}':=\Env_R^{out}(A,S')\cap \bar{\Delta}$. By $B \in \mathcal{E}$ we have $S \subseteq S'$ and thus $\mathcal{E}' \subseteq \mathcal{E}$.

Applying proposition \ref{3 basic properties out-envelopes} (v) on $\mathcal{E}$ and $\mathcal{E}'$ we get that $\mathcal{E}'$ is a face of $\mathcal{E}$. But $\mathcal{E}'$ contains an interior point $B$ of $\mathcal{E}$ and thus we have $\mathcal{E}'=\mathcal{E}$. 

Let now $U_A$ be as in lemma \ref{lem:looseCWinNeighbourhood} and $A' \in U_A \cap \mathcal{E}$. By $A' \in \mathcal{E}=\mathcal{E'}$ we get $S' \subseteq W_R(A,A')$ and by $A' \in U_A$ and lemma \ref{lem:looseCWinNeighbourhood} we have $CW(A',B) \subseteq S'$. But then lemma \ref{glueing geodesics} implies, that $A'$ lies on a geodesic from $A$ to $B$.

So if the dimension of $\mathcal{E}$ is $k$, then we have $\dim(\EnvR(A,B) \cap \bar{\Delta})=k$.

For $k \geq 2$ exists several points $A' \in \EnvR(A,B)$ with the same distance from $A$ which means they can't lie on the same geodesic.

Similar we get the statement for in-envelopes.
\end{proof}

Putting this together we get:

\begin{theorem}\label{theorem:unique_geodesics_are_edges}
Let $I  \subset \R$ be any interval, $t \in I$ and $g: I \to CV_n$ a geodesic. Then the following is equivalent:
\begin{enumerate}[(i)]
\item  $g$ is a rigid geodesic, i.e. $g|_{[a,b]}$ is the unique geodesic joining $g(a)$ and $g(b)$ for all $a \leq b \in I$
\item For all $t \in I$ $g|_{\geq t}$ lies on the edges of an out-envelope of $g(t)$ and $g|_{\leq t}$ lies on edges of in-envelopes of $g(t)$ (but not necessarily in a single in-envelope).
\end{enumerate}
\end{theorem}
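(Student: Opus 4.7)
The proof splits into the two implications, of which (ii)$\Rightarrow$(i) is the easy direction: given $a \leq b$ in $I$, apply (ii) at $t=a$ so that $g|_{[a,b]}$ is a concatenation of consecutive edges of some single out-envelope $\Env_R^{out}(g(a),S)$. Lemma \ref{3 Lemma not leaving hyperplanes and unique geodesics}(iii) then identifies $g|_{[a,b]}$ as the unique geodesic from $g(a)$ to $g(b)$.

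For (i)$\Rightarrow$(ii) fix $t \in I$ and treat the two halves separately. For the forward half the plan is to produce a single out-envelope that works for all $s\geq t$. Define
\[
S^{\ast} := \bigcap_{\substack{s \in I\\ s \geq t}} CW(g(t),g(s)).
\]
By Lemma \ref{keep witness} the family $s \mapsto CW(g(t),g(s))$ is nested decreasing in the finite set $\cand(g(t))$, so it stabilizes; each term is non-empty by Theorem \ref{1 candidate is enough}, hence so is $S^{\ast}$. Since $S^{\ast} \subseteq W_R(g(t),g(s))$ for every $s \geq t$, we obtain $g|_{\geq t} \subseteq \Env_R^{out}(g(t),S^{\ast})$.

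The key point is then to show that each $g(s)$ lies on the $1$-skeleton of $\Env_R^{out}(g(t),S^{\ast}) \cap \bar{\Delta}$, where $\bar{\Delta}$ is the closed simplex containing $g(s)$. Setting $S_s := W_R(g(t),g(s))$, the point $g(s)$ lies in the relative interior of $\Env_R^{out}(g(t),S_s) \cap \bar{\Delta}$ (passing to any face would require acquiring a new witness), and by Proposition \ref{3 basic properties out-envelopes}(v) this polytope is a face $F_s$ of $\Env_R^{out}(g(t),S^{\ast}) \cap \bar{\Delta}$. Lemma \ref{lem:interior_of_simplex_implies_many_geodesics} yields $\dim(\EnvR(g(t),g(s)) \cap \bar{\Delta}) = \dim F_s$, and rigidity of $g$ forces this dimension to be at most $1$. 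Hence $g(s)$ is on a vertex or $1$-face of $\Env_R^{out}(g(t),S^{\ast}) \cap \bar{\Delta}$, and continuity of $g$ together with the polytope structure in each traversed simplex promotes this pointwise statement to $g|_{\geq t}$ running along a sequence of consecutive edges.

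For the backward half one cannot expect a single in-envelope to suffice, because as $s$ decreases the simplex carrying $g(s)$ (and with it $\cand(g(s))$) can change, so the analogous intersection over $s \leq t$ need not stabilize. The remedy is to partition $(-\infty,t]\cap I$ into subintervals $[a,b]$ on which $g$ stays in a single closed simplex, pick $\gamma \in CW(g(a),g(t))$ for each such subinterval and note that $\gamma \in W_R(g(s),g(t))$ for every $s \in [a,b]$ by Lemma \ref{keep witness}, and then run the analogous dimension argument for $\Env_R^{in}(g(t),\gamma)$ using Proposition \ref{3 basic properties in-envelopes}(v) and the in-envelope version of Lemma \ref{lem:interior_of_simplex_implies_many_geodesics}. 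Patching the resulting edge-walks gives (ii).

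The main obstacle is the upgrade from the pointwise conclusion ``each $g(s)$ lies in the $1$-skeleton of the envelope polytope'' to the structural conclusion that $g$ actually runs along consecutive edges rather than, say, oscillating within an edge or skipping discretely between vertices. This should follow from $g$ being a minimizing geodesic of an asymmetric piecewise-linear metric (within a single simplex it is in particular locally a straight segment), but it requires a careful local analysis at the discrete transition times where $g$ passes between edges or crosses simplex boundaries.
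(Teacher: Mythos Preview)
Your (ii)$\Rightarrow$(i) is correct and matches the paper.

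For (i)$\Rightarrow$(ii) your direct argument has a real gap: Lemma~\ref{lem:interior_of_simplex_implies_many_geodesics} requires its base point $A$ to lie in the closed simplex $\bar\Delta$ (the proof manufactures points $A' \in U_A \cap \mathcal{E}$ with $\mathcal{E} \subseteq \bar\Delta$, which is vacuous once $A \notin \bar\Delta$). You invoke the lemma with $A = g(t)$ and $\bar\Delta$ the simplex carrying $g(s)$; in general $g(t) \notin \bar\Delta$, so the identity $\dim(\EnvR(g(t),g(s)) \cap \bar\Delta) = \dim F_s$ is unjustified. The pointwise-to-structural ``upgrade'' you flag at the end is likewise left open.

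The paper sidesteps both issues by arguing the contrapositive. Assuming $g|_{\geq t}$ is not an edge-walk in any out-envelope of $g(t)$, take $s \geq t$ minimal with this failure and choose $s' > s$ close enough that $g(s)$ and $g(s')$ lie in a common closed simplex $\bar\Delta$. Minimality of $s$ forces every geodesic from $g(t)$ into $\Env_R^{out}(g(t),S)$, where $S = W_R(g(t),g(s'))$, to agree with $g$ on $[t,s]$; hence $\Env_R^{out}(g(t),S) = g([t,s]) \cup \Env_R^{out}(g(s),S)$, and $g(s')$ sits in the relative interior of the at-least-two-dimensional polytope $\Env_R^{out}(g(s),S) \cap \bar\Delta$. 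Now Lemma~\ref{lem:interior_of_simplex_implies_many_geodesics} applies legitimately with base point $g(s) \in \bar\Delta$, producing distinct geodesics from $g(s)$ to $g(s')$ and contradicting rigidity. The minimality trick thus simultaneously supplies a base point in the correct simplex for the lemma and renders your pointwise-to-edge-walk upgrade unnecessary.
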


\begin{Bew}
By lemma \ref{3 Lemma not leaving hyperplanes and unique geodesics} (iii) and setting $t=a$ we have $(ii) \Rightarrow (i)$.

Assume (ii) does not hold, then wlog $g|_{\geq t}$ does not lie on the edges of an out-envelope. Let $s\geq t$ be minimal such that $g|_{>s}$ lies not on edges of an out-envelope of $g(t)$. This implies $\Env_R^{out}(g(t),W_R(g(t),g(s')))$ has at least dimension 2 near $g(s)$ and $g(s')$ lies in the interior of the envelope for small enough $s'>s$. Fix such a small $s'>s$ such that $g(s)$ and $g(s')$ lie in the same closed simplex $\bar{\Delta} \subset CV_n$ and set $S:=W_R(g(t),g(s'))$. Since $s$ is minimal with this condition we get that each geodesic $g'$ from $g(t)$ to a point in $\Env_R^{out}(g(t),S)$ has coincide with $g$ until $s$. Thus we get $\Env_R^{out}(g(t),S)= g|_{[t,s]} \cup \Env_R^{out}(g(s),S) $ and $g(s')$ lies not on an edge of $\Env_R^{out}(g(s),S)$. By lemma \ref{lem:interior_of_simplex_implies_many_geodesics} to $g(s)$ and $g(s')$ we get different geodesics from $g(s)$ to $g(s')$ hence (i) also does not hold.
\end{Bew}

\begin{Bem}
The reason one out-envelope is enough for theorem \ref{theorem:unique_geodesics_are_edges} is that for a given geodesic $g: I \to CV_n$ and $t \in I$ we have by lemma \ref{keep witness} $CW(g(t),g(s_1)) \subseteq CW(g(t),g(s_2)) \neq \emptyset$ for all $t\leq s_1 \leq s_2$. Since $CW(g(t),g(s))$ is finite for all $s>t$ this implies $CW(g|_{>t}):=\bigcap_{s>t} CW(g(t),g(s)) \neq \emptyset$ and $CW(g(t),g(s))\subseteq CW(g|_{>t})$ for all $s>t$.

This argument does not hold for the negative direction $g|_{<t}$, since the candidates here depend on $g(s)$ and not $g(t)$. We might even get that $\bigcap_{s<t} W_R(g(s),g(t))$ is an empty set. An example for this in $CV_2^{red}$ can be constructed as follows. 

Enumerate the elements of $F_2$ as $(\gamma_i)_{i \in \N}$ and start with any simplex $\Delta \subset CV_2^{red}$. Let $A_0 \in \bar{\Delta}$ be a figure of eight. A short calculation shows that there exist two rigid geodesics from two different figures of eights $B_1,C_1 \in \bar{\Delta} \setminus T(A_0)$ ending in $A_0$. Choose $A_1:= B_1$ or $C_1$ such that we have $\gamma_1 \not \in W_R(A_1,A_0)$. Inductively choose $A_i$ such that $\gamma_i \not \in W_R(A_i,A_{i-1}) $ for all $i \in \N$. As in lemma \ref{3 Lemma not leaving hyperplanes and unique geodesics} (iii) this yields a rigid geodesic $g_i$ from $A_i$ to $A_0$ for all $i \in \N$ and thus $\gamma_i \not \in W_R(A_i,A_0)\subset W_R(A_i,A_{i-1})$. Hence after identifying the $g_i$ on their common parts we have constructed a rigid geodesic $g$ with $\bigcap_{s<t} W_R(g(s),g(t))=\emptyset$.
\end{Bem}

\section{Simplicial structure of $CV_n^{red}$}

We will now show how to distinguish faces in $CV_n^{red}$ by the use of envelopes. The important observation we use here is, that envelopes may have different dimensions in different simplices. In the reduced Outer Space we can construct such envelopes near any face as follows:

\begin{Lem}\label{4 geodesic_passing_face_looses_general}
Let $C \in CV_n^{red}$ be in a face, i.e. the underlying graph of $C$ has a vertex of degree at least 4. Let furthermore $U \ni C$ be a neighbourhood of $C$. Then there exist \linebreak
$A,B \in U$  such that the envelope $\EnvR(A,B)$ has near $A$ higher dimension than around $B$, i.e. there exist neighbourhoods $U_A$ of $A$ and $U_B$ of $B$ with $\dim(\EnvR(A,B)\cap U_A) > \dim(\EnvR(A,B)\cap U_B)$ (s. figure \ref{bilder von envelopes}). In particular we have $\dim(\EnvR(A',B')\cap U_A) \neq \dim(\EnvR(A',B')\cap U_B)$ for all $A' \in U_A$ and $B' \in B$.
\end{Lem}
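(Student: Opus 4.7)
The strategy exploits the observation previewed in the introduction: if the candidate witness $\gamma$ from $A$ to $B$ ceases to be a candidate in the simplex containing $B$, then by Theorem \ref{1 candidate is enough} at least one new candidate of that simplex must achieve the maximal stretching of $\gamma$, imposing an additional $(\star\star)$-equality from Lemma \ref{wiggling polytope is polytope} and reducing the local envelope dimension. The plan is to realise this scenario on either side of $C$.

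Since $C \in CV_n^{red}$ has a vertex $v$ of valency $\geq 4$, there are (at least) two topologically distinct ways to blow up $v$ into a pair of vertices of degree $\geq 3$ joined by a new short edge, yielding maximal simplices $\Delta_A, \Delta_B$ both incident to $T(C)$. I would pick $A \in \Delta_A \cap U$ close to $C$ in one such blow-up, and a candidate $\gamma \in \cand(A)$ whose cyclic edge-sequence traverses $v$ in a pattern whose reassembly in the alternative blow-up $\Delta_B$ is no longer an embedded simple loop, figure-eight, or barbell — such $\gamma$ exists by the combinatorial variety of loops at a degree-$\geq 4$ vertex. I would then pick $B \in \Delta_B \cap U$ close to $C$ with edge-lengths tuned so that $\gamma$ is maximally stretched from $A$ to $B$ (arrangeable by shrinking the edges of $B$ outside $\gamma$'s support).

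For the envelope dimension near $A$, Proposition \ref{3 basic properties out-envelopes} (iii) gives that $\Env_R^{out}(A,\gamma)$ has non-empty interior in $\Delta_A$; shrinking $U_A$ and invoking Lemma \ref{lem:looseCWinNeighbourhood}, no spurious candidate witness appears there, and the in-envelope $\Env_R^{in}(B,\gamma)$ contains a full-dimensional neighbourhood of $A$ by continuity. Hence $\dim(\EnvR(A,B) \cap U_A) = 3n-4$. Near $B$, since $\gamma \notin \cand(B)$ while $\gamma \in W_R(A,B)$, Theorem \ref{1 candidate is enough} produces a candidate $\omega_1 \in \cand(B) \cap W_R(A,B)$; a convex-combination argument using that the edge-usage of $\gamma$ in $B$ equals a positive linear combination of those of the candidates appearing in the decomposition of its immersed realisation in $\Delta_B$ forces in fact $\omega_1, \ldots, \omega_k \in W_R(A,B)$ with $k\geq 2$. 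Since these $\omega_i$ are topologically distinct candidates of $\Delta_B$, each yields a non-trivial $(\star\star)$-equality, and the equalities are linearly independent because the $\omega_i$ use the new blow-up edge of $\Delta_B$ with different multiplicities. Therefore $\dim(\EnvR(A,B) \cap U_B) \leq 3n-5$ for a small neighbourhood $U_B$ of $B$.

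The main obstacle will be the combinatorial bookkeeping: exhibiting a concrete $\gamma \in \cand(A)$ that fails to be a candidate in $\Delta_B$ and verifying that the induced $(\star\star)$-equalities are linearly independent, i.e. that the decomposition of $\gamma$ in $\Delta_B$ is genuinely non-trivial in the right way. The concluding claim for arbitrary $A' \in U_A, B' \in U_B$ follows from Lemma \ref{lem:looseCWinNeighbourhood} together with the continuous dependence of the half-space coefficients in Lemma \ref{wiggling polytope is polytope}: small perturbations preserve the active set of candidate witnesses, and hence preserve both local envelope dimensions.
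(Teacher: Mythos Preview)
Your overall strategy matches the paper's: blow up the high-valence vertex $v$ in two incompatible ways to get maximal simplices $\Delta_A,\Delta_B$, choose a candidate $\gamma\in\cand(A)$ that is no longer a candidate in $\Delta_B$, and argue that near $B$ the decomposition of $\gamma$ forces extra $(\star\star)$-equalities. That is exactly the mechanism the paper uses. However, two of your supporting claims are not right as stated, and the first one is where the actual content of the lemma lives.

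First, the existence of such a $\gamma$ is not a consequence of generic ``combinatorial variety of loops at a degree-$\geq 4$ vertex''; it genuinely requires the hypothesis $C\in CV_n^{red}$, which you never invoke. The paper uses the absence of separating edges to find two \emph{embedded circles} $x,y$ through $v$ that are disjoint away from $v$, and then takes $\gamma=xy$. In one blow-up $xy$ is a candidate (the new edge cancels cyclically), in the other blow-up $xy$ traverses the new edge twice in the same direction and fails to be a candidate while $x$ and $y$ individually are. The remark immediately following the lemma gives the doubled-barbell graph as a counterexample in the non-reduced case: there every blow-up of the central vertex yields the \emph{same} candidate set, so no $\gamma$ with your property exists. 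Your sketch would need to explain exactly where reducedness enters.

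Second, your reason for the non-triviality of the $(\star\star)$-constraint---that the $\omega_i$ ``use the new blow-up edge of $\Delta_B$ with different multiplicities''---is false in the paper's construction: both $x$ and $y$ use the new edge exactly once. The constraint is non-trivial because $x$ and $y$ are disjoint outside $v$ and hence have genuinely different edge-support. The paper's argument is cleaner than a general convex-combination claim: since $l_B(xy)=l_B(x)+l_B(y)$ and $l_D(xy)=l_D(x)+l_D(y)$ for $D\in T(B)$, the mediant inequality $\tfrac{a+b}{c+d}\le\max(\tfrac{a}{c},\tfrac{b}{d})$ forces $x$ and $y$ to be witnesses from $D$ to $B$ whenever $xy$ is, giving one honest linear equation on $D$ and hence $\dim\le 3n-5$ near $B$.
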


\begin{Bew}
Since we can slightly relax vertices of valency 4 or greater, we can wlog. assume that $C$ has exactly one vertex $v$ of valency 4. Let's look at the star around $v$:
\[
\begin{tikzpicture}
\draw (-1,-1) -- (0,0)node[right]{v} node[circle,fill,inner sep=1pt]{};
\draw (-1,1) -- (0,0);
\draw (1,1) -- (0,0);
\draw (1,-1) -- (0,0);
\end{tikzpicture}
\]
Since $C \in CV_n^{red}$ there exists no separating edge and hence we find embedded circles $\textcolor{blue}{x}$ and $\textcolor{red}{y}$ passing through $v$:
\[
\begin{tikzpicture}
\draw (0,0)node[right]{$v$} node[circle,fill,inner sep=1pt]{};
\draw[red] (-1,-1) -- (0,0) -- (1, -1);
\draw[red, decorate, decoration={snake}] (1, -1) -- (-1,-1);
\draw[blue] (-1,1) -- (0,0) -- (1,1);
\draw[blue, decorate, decoration={snake}] (1,1) -- (-1,1);
\end{tikzpicture}
\]
Furthermore we can assume that $x$ and $y$ are disjoint outside $v$ by cutting out common edges and glueing the paths back together:
\[
\begin{tikzpicture}
\draw[blue] (-1,1) -- (0,0.02) -- (1, 0.02) -- (2,1);
\draw[red] (-1,-1) -- (0,-0.02) -- (1, -0.02) -- (2,-1);
\draw (0,0)  node[circle,fill,inner sep=1pt]{};
\draw (1,0)  node[circle,fill,inner sep=1pt]{};
\draw (3,0) node{$\Longrightarrow$};
\begin{scope}[shift={(5,0)}]
\draw[blue] (-1,1) -- (0,0) -- (-1,-1);
\draw[red] (2,1) -- (1, 0) -- (2,-1);
\draw (0,0)  node[circle,fill,inner sep=1pt]{};
\draw (1,0)  node[circle,fill,inner sep=1pt]{};
\draw (0,0) -- (1,0);
\end{scope}
\end{tikzpicture}
\]
Giving now $x$ and $y$ an orientation, we can see them as elements in $F_n$.\\
Let now $\varepsilon>0$ be small enough, then the following graphs are still inside $U$:
\begin{enumerate}
\item[$A:$] is obtained by relaxing $v$ to an edge $(v_1, v_2)$ of length $\varepsilon$ as in figure \ref{fig: relaxed_vertex_with_loops}
and keeping the rest as in $C$.
\item[$B:$] is obtained by relaxing $v$ in the different manner as in $A$ (see the orientation of $y$) and shrinking each edge of $C$ which does not lie in $x$ or $y$ by $2\varepsilon$, i.e. \linebreak
$l_B(e)=l_C(e)-2 \varepsilon$.
\end{enumerate}

\begin{figure}[h]\label{fig: relaxed_vertex_with_loops}
\centering
\begin{tikzpicture}
\draw (0.2,0)node[right]{$v_2$} node[circle,fill,inner sep=1pt]{} -- (-0.2,0)node[left]{$v_1$} node[circle,fill,inner sep=1pt]{};
\draw[red, decoration={markings, mark=at position 0.5 with {\arrow{>}}},postaction={decorate}] (-0.2,0) -- (-1,-1); \draw[red, decoration={markings, mark=at position 0.5 with {\arrow{>}}},postaction={decorate}] (1, -1) -- (0.2,0);
\draw[red, ->, decorate, decoration={snake}] (-1,-1) -- (0,-1);
\draw[red, decorate, decoration={snake}] (0,-1) -- (1,-1);
\draw[blue, decoration={markings, mark=at position 0.5 with {\arrow{>}}},postaction={decorate}] (-1,1) -- (-0.2,0);
\draw[blue, decoration={markings, mark=at position 0.5 with {\arrow{>}}},postaction={decorate}] (0.2,0) -- (1,1);
\draw[blue, decorate, decoration={snake}]  (0,1) -- (-1,1);
\draw[blue, ->, decorate, decoration={snake}] (1,1) -- (0,1);
\begin{scope}[shift={(4,0)}]
\draw (0.2,0)node[right]{$v_2$} node[circle,fill,inner sep=1pt]{} -- (-0.2,0)node[left]{$v_1$} node[circle,fill,inner sep=1pt]{};
\draw[red, decoration={markings, mark=at position 0.5 with {\arrow{>}}},postaction={decorate}] (-1,-1) -- (-0.2,0); \draw[red, decoration={markings, mark=at position 0.5 with {\arrow{>}}},postaction={decorate}] (0.2,0) -- (1, -1);
\draw[red, decorate, decoration={snake}] (0,-1) -- (-1,-1);
\draw[red, ->,decorate, decoration={snake}] (1,-1) -- (0,-1);
\draw[blue, decoration={markings, mark=at position 0.5 with {\arrow{>}}},postaction={decorate}] (-1,1) -- (-0.2,0);
\draw[blue, decoration={markings, mark=at position 0.5 with {\arrow{>}}},postaction={decorate}] (0.2,0) -- (1,1);
\draw[blue, decorate, decoration={snake}]  (0,1) -- (-1,1);
\draw[blue, ->, decorate, decoration={snake}] (1,1) -- (0,1);
\end{scope}
\end{tikzpicture}
\caption{$v$ relaxed to get $A$ and $B$}
\end{figure}
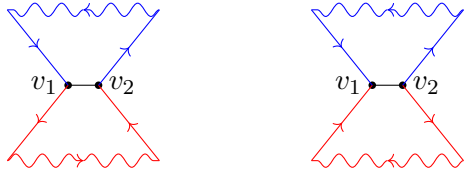

Since topologically $A$ and $B$ are the same outside of $v$, each candidate is only stretched from $A$ to $B$ if it crosses $v_1$ or $v_2$ and stays inside of $x$ and $y$. Hence $xy$ is the only maximally stretched candidate from $A$ to $B$. Considering the inequalities for the envelope around $A$, we get that in a small neighbourhood around $A$ the inequalities of type $(**)$ are always satisfied and as in Proposition \ref{3 basic properties out-envelopes} (iii) it has full dimension $3n-4$.

On the other hand let $D \in \EnvR(A,B)$ close to $B$, i.e. with the same topological type as $B$. By lemma \ref{keep witness} also $xy$ has to be a witness from $D$ to $B$ and hence $x$ and $y$ are also a witnesses from $D$ to $B$ (for $a,b,c,d \in \R_{>0}$ we have $\frac{a+b}{c+d}\leq \frac{a}{c}$ or $\frac{b}{d}$). But this yields a non trivial equality condition in $(**)$ for the edge lengths of $D$ and so $\EnvR(A,B)$ has close to $B$ at most dimension $3n-4-1$.
\end{Bew}

\begin{Bem}
The argumentation of lemma \ref{4 geodesic_passing_face_looses_general} does not necessarily hold in the non-reduced Outer Space, e.g. consider $C$ as a doubled barbell graph.

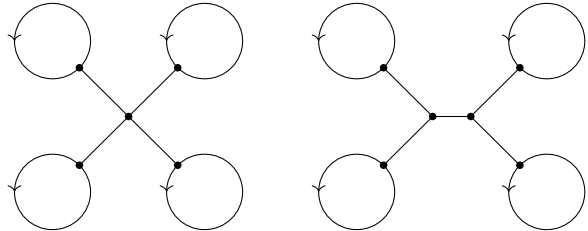
\begin{figure}[h]
\center
\begin{tikzpicture}[scale=0.5]
\draw[decoration={markings, mark=at position 0.5 with {\arrow{>}}},postaction={decorate}] (-2,-2) circle (1);
\draw[decoration={markings, mark=at position 0.5 with {\arrow{>}}},postaction={decorate}] (2,-2) circle (1);
\draw[decoration={markings, mark=at position 0.5 with {\arrow{>}}},postaction={decorate}] (-2,2) circle (1);
\draw[decoration={markings, mark=at position 0.5 with {\arrow{>}}},postaction={decorate}] (2,2) circle (1);
\draw (-1.29289,-1.29289)node[circle,fill,inner sep=1pt]{} -- (0,0) node[circle,fill,inner sep=1pt]{};
\draw (-1.29289,1.29289)node[circle,fill,inner sep=1pt]{} -- (0,0);
\draw (1.29289,-1.29289)node[circle,fill,inner sep=1pt]{} -- (0,0);
\draw (1.29289,1.29289)node[circle,fill,inner sep=1pt]{} -- (0,0);

\begin{scope}[shift={(8,0)}]
\draw[decoration={markings, mark=at position 0.5 with {\arrow{>}}},postaction={decorate}] (-2,-2) circle (1);
\draw[decoration={markings, mark=at position 0.5 with {\arrow{>}}},postaction={decorate}] (3,-2) circle (1);
\draw[decoration={markings, mark=at position 0.5 with {\arrow{>}}},postaction={decorate}] (-2,2) circle (1);
\draw[decoration={markings, mark=at position 0.5 with {\arrow{>}}},postaction={decorate}] (3,2) circle (1);
\draw (-1.29289,-1.29289)node[circle,fill,inner sep=1pt]{} -- (0,0) node[circle,fill,inner sep=1pt]{} -- (1,0) node[circle,fill,inner sep=1pt]{};
\draw (-1.29289,1.29289)node[circle,fill,inner sep=1pt]{} -- (0,0);
\draw (2.29289,-1.29289)node[circle,fill,inner sep=1pt]{} -- (1,0);
\draw (2.29289,1.29289)node[circle,fill,inner sep=1pt]{} -- (1,0);
\end{scope}

\end{tikzpicture}

\caption{doubled barbell graph and its adjacent topological type}
\end{figure}

However we resolve the 4-valent vertex, we will always get the same sets of candidates. Fixing a weighting and hence $C$ we can now easily find a neighbourhood of $C$ which contains only the topological types gained by slightly stretching the 4-valent vertex. 
\end{Bem}

With previous lemma \ref{4 geodesic_passing_face_looses_general} and the fact that envelopes are preserved under isometries we get that isometries of $CV_n^{red}$ send maximal simplices to maximal simplices. The lower dimensional skeleton of $CV_n^{red}$ is preserved for topological reasons and we get.

\begin{Satz}
An isometry in regard of the asymmetric metric of $CV_n^{red}$ is simplicial.
\end{Satz}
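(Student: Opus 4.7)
Let $\phi$ be an isometry of $CV_n^{red}$ with respect to the asymmetric metric $d_R$. Because $\phi$ preserves distances, it sends geodesics to geodesics and hence envelopes to envelopes; being a homeomorphism, it also preserves the local topological dimension of any subset. Thus for every $A,B \in CV_n^{red}$ and every neighborhood $U_A \ni A$ one has $\dim(\EnvR(A,B)\cap U_A) = \dim(\EnvR(\phi A,\phi B)\cap \phi(U_A))$.

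The plan is to give a purely envelope-theoretic characterization of being in a proper face. I claim that $C\in CV_n^{red}$ lies in a proper face if and only if every neighborhood $U$ of $C$ contains points $A,B\in U$ admitting neighborhoods $U_A\ni A$ and $U_B\ni B$ with
\[\dim(\EnvR(A,B)\cap U_A) \neq \dim(\EnvR(A,B)\cap U_B).\]
The forward direction is exactly Lemma \ref{4 geodesic_passing_face_looses_general}. For the converse, suppose $C$ lies in the interior of a maximal simplex $\Delta$ and shrink $U$ so that $U \subseteq \Delta$. For any $A,B\in U$, Lemma \ref{wiggling polytope is polytope} tells us that $\EnvR(A,B)\cap\Delta$ is a polytope, and every polytope has constant local dimension at each of its points (equal to the dimension of the polytope itself); hence the local dimensions of $\EnvR(A,B)$ at $A$ and at $B$ agree. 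So the criterion genuinely separates interior points of maximal simplices from face points.

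Because this criterion is formulated entirely in terms of envelopes and their local dimensions, and $\phi$ preserves both, $\phi$ must map the top-dimensional stratum $\Sigma$ (the disjoint union of the interiors of all maximal simplices) to itself. As $\phi$ is a homeomorphism, it permutes the connected components of $\Sigma$, and these components are precisely the open maximal simplices of $CV_n^{red}$; taking closures, $\phi$ permutes closed maximal simplices. Lower-dimensional open faces can then be recovered combinatorially: an open $k$-face is the set-theoretic difference of the intersection of the closures of those maximal simplices containing it and the union of strata of lower dimension. Since $\phi$ is a homeomorphism that respects both this incidence data and the topological dimension of strata, it maps each open $k$-face to an open $k$-face, i.e.\ it is simplicial.

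The main technical point is the converse direction of the envelope criterion — the observation that within a single maximal simplex the envelope is a polytope, so its local dimension cannot drop between $A$ and $B$. Once that is in hand, the step from ``top-dimensional simplices are permuted'' to ``all simplices are permuted'' is standard simplicial bookkeeping made legitimate by $\phi$ being a homeomorphism of $CV_n^{red}$.
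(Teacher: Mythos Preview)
Your proof is correct and follows the paper's approach: use Lemma~\ref{4 geodesic_passing_face_looses_general} to detect faces via a local-dimension drop of envelopes, then invoke preservation of envelopes under isometries. The paper's own proof of this statement is only two sentences and leaves the converse direction (that the criterion \emph{fails} inside a maximal simplex) implicit; you supply it explicitly via the convexity of $\EnvR(A,B)\cap\Delta$, which is a clean addition.

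One small imprecision worth tightening: your criterion is phrased as ``there exist neighbourhoods $U_A,U_B$ with unequal dimensions'', but your converse argument only controls the \emph{local} dimensions at $A$ and $B$ (i.e.\ for all sufficiently small $U_A,U_B$). If one allowed $U_A$ to be very large it could in principle pick up parts of the envelope lying in other simplices, so the quantifier ``there exist'' is not what your polytope argument actually rules out. The fix is simply to formulate the criterion in terms of local dimension (equivalently, for all sufficiently small neighbourhoods), which is also how Lemma~\ref{4 geodesic_passing_face_looses_general} is proved. With that adjustment the argument is watertight and matches the paper.
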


For the symmetric version of this theorem, which also implies last theorem, we will need a little bit more work. We will first show, that almost all points $A,B \in \Delta$ in the same maximal simplex have a top dimensional envelope $\Env(A,B)$. For this we will introduce a notion of general position. On the other hand by lemma \ref{4 geodesic_passing_face_looses_general} there are open sets at faces, where this is not satisfied.

\begin{defi}\label{def:generalPosition}
Let $A,B \in CV_n$ be in maximal simplices. We say $B$ is in \emph{general position} to $A$, if there exist neighbourhoods $U_A \ni A, U_B \ni B$ s.t. $\forall \ A' \in U_A, B' \in U_B$ the sets of candidate witnesses $CW(A,B)=CW(A',B')$ are the same. Else we say $B$ is in \emph{special position} to $A$.
\end{defi}

As one expects almost every pair is in general position, more exactly:

\begin{Prop}\label{general_position_dense}
Let $A \in CV_n$ be in a maximal simplex, then the set of points $B$ in general position to $A$ is dense and open. The same statement holds, if we fix $B$ and vary $A$.
\end{Prop}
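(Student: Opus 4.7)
Openness is immediate from the definition. If $B$ is in general position via neighborhoods $U_A \ni A, U_B \ni B$ with $CW(A', B') = CW(A, B)$ on $U_A \times U_B$, then any $B_1 \in U_B$ is also in general position: take the same $U_A$ and any open $U_{B_1} \subseteq U_B$ containing $B_1$, and note that $CW(A', B') = CW(A, B) = CW(A, B_1)$ for all $(A', B') \in U_A \times U_{B_1}$, the last equality by applying the hypothesis at $(A, B_1)$.

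For density, I would combine Lemma \ref{lem:looseCWinNeighbourhood} with a minimization argument. Given $B_0$ in a maximal simplex and any neighborhood $U$ of $B_0$, the lemma supplies a joint neighborhood $U_A \times V \subseteq T(A) \times U$ of $(A, B_0)$ on which $CW(A', B') \subseteq CW(A, B_0)$. The function $|CW|$ is integer-valued and bounded on $U_A \times V$ by $|CW(A, B_0)|$, so it attains a minimum $m$ at some $(A_1, B_1) \in U_A \times V$. Applying the lemma once more at $(A_1, B_1)$ yields a sub-neighborhood $W \subseteq U_A \times V$ of $(A_1, B_1)$ on which $CW(A'', B'') \subseteq CW(A_1, B_1)$; minimality of $m$ forces equality throughout $W$, so $CW$ is identically $CW(A_1, B_1)$ on $W$ and $(A_1, B_1)$ is in joint general position.

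The main obstacle is passing from joint general position at $(A_1, B_1)$ to a $B$ in general position with the fixed $A$. My plan is to observe that the complement of joint general position in $T(A) \times T(B_0)$ is contained in a finite union of bilinear hypersurfaces $H_{\gamma_1, \gamma_2} = \{l_{B'}(\gamma_1) l_{A'}(\gamma_2) = l_{B'}(\gamma_2) l_{A'}(\gamma_1)\}$, one for each pair of candidates $\gamma_1, \gamma_2 \in \cand(A)$ whose stretching-ratio functions $(A', B') \mapsto l_{B'}(\gamma)/l_{A'}(\gamma)$ are not identically equal. The slice $\{A\} \times T(B_0)$ is entirely contained in some $H_{\gamma_1, \gamma_2}$ only when the $E(B_0)$-edge-count vectors of $\gamma_1, \gamma_2$ are proportional with constant exactly $l_A(\gamma_1)/l_A(\gamma_2)$; the key technical step is to rule out this resonance for $A$ in the interior of a maximal simplex, using the combinatorial structure of candidate loops. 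Once that is done, each $H_{\gamma_1, \gamma_2}$ meets the slice in a proper codim-one subset, their union is nowhere dense in the slice, and density of the $B$'s in general position to $A$ follows.
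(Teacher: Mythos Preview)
Your openness argument is correct and matches the paper.

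For density, the minimization paragraph is a detour that you yourself recognize does not land: it produces a pair $(A_1,B_1)$ with $B_1$ in general position to $A_1$, but says nothing about the fixed $A$. The substantive content is in your final paragraph, and there the ``key technical step'' you flag is genuinely missing. More importantly, your stated plan---to \emph{rule out} the resonance where the $E(B_0)$-edge-count vectors of $\gamma_1,\gamma_2$ are proportional with ratio $l_A(\gamma_1)/l_A(\gamma_2)$---would fail as written: the resonance \emph{does} occur, for instance when $\gamma_1=xy$ and $\gamma_2=xy^{-1}$ are a barbell and its orientation-flipped counterpart. What you actually need is the finer statement that whenever this proportionality holds in $T(B_0)$, the two candidates also have proportional (in fact equal) edge counts in $T(A)$, so that their stretching-ratio functions agree identically on $T(A)\times T(B_0)$ and the pair was already excluded from your union. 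Establishing this requires the combinatorial classification of candidates with proportional edge-count vectors in a trivalent graph, which is exactly the non-trivial content of the paper's Lemma~\ref{4_general_position_and_maximal_dimension}.

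The paper's route is more direct: Lemma~\ref{4_general_position_and_maximal_dimension} characterizes general position as $B\in\Env_R^{out}(A,\gamma)^O$ for some $\gamma$. Since the finitely many out-envelopes $\Env_R^{out}(A,\gamma)$ with $\gamma\in\cand(A)$ are polytopes covering each simplex (Proposition~\ref{3 basic properties out-envelopes}(i)), the union of their interiors is dense, and the proposition follows in one line. Your hypersurfaces $H_{\gamma_1,\gamma_2}$ sliced at $A$ are precisely the codimension-one faces of these out-envelopes, so the two approaches share the same combinatorial core; the paper just isolates it as a clean equivalence before invoking it, whereas your sketch leaves that equivalence buried and unproved.
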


\begin{Bew}
The open property follows directly from the definition.
The dense property follows from Lemma \ref{4_general_position_and_maximal_dimension}, since in each simplex there are only finitely many envelopes 
\end{Bew}

We can directly see points in general position with the help of in- or out-envelopes, namely
\begin{Lem}\label{4_general_position_and_maximal_dimension}
Let $A,B \in CV_n$ be points in maximal simplices, then the following are equivalent:
\begin{enumerate}[(i)]
\item $B$ is in general position to $A$.
\item $A \in  \Env_R^{in}(B,\gamma)^O$ for a $\gamma \in F_n$.
\item $B \in  \Env_R^{out}(A,\gamma)^O$ for a $\gamma \in F_n$.
\end{enumerate}
\end{Lem}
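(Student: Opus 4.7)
The plan is to prove the equivalence $(i) \Leftrightarrow (iii)$ in detail, after which $(i) \Leftrightarrow (ii)$ follows by the same argument with the roles of $A$ and $B$ interchanged and the $(\star)$-inequalities replaced by the $(\star\star)$-inequalities of lemma~\ref{wiggling polytope is polytope}. Throughout I would exploit that, because $A$ and $B$ lie in maximal simplices, the sets $\cand(A')$ and $\cand(B')$ are constant on sufficiently small neighbourhoods, and I will invoke lemma~\ref{lem:looseCWinNeighbourhood} to keep candidates coming from adjacent smaller simplices out of $CW(A',B')$ under small perturbations.

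The implication $(i) \Rightarrow (iii)$ should be essentially a definition chase. Starting from general position with witnessing neighbourhoods $U_A$, $U_B$, I pick any $\gamma \in CW(A,B)$, which is non-empty by theorem~\ref{1 candidate is enough}. For every $B' \in U_B$ general position gives $\gamma \in CW(A,B') \subseteq W_R(A,B')$, so $B' \in \Env_R^{out}(A,\gamma)$. Hence $U_B \subseteq \Env_R^{out}(A,\gamma)$, which is exactly $B \in \Env_R^{out}(A,\gamma)^O$.

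For $(iii) \Rightarrow (i)$ I would unpack the interior condition via lemma~\ref{wiggling polytope is polytope}. Inside the maximal simplex containing $B$, $\Env_R^{out}(A,\gamma)$ is cut out by the finitely many $(\star)$-inequalities indexed by $\omega \in \cand(A)$, and a point is in the interior exactly when every non-trivial such inequality is strict; equivalently, $\gamma$ is the strict maximum stretch among candidates of $A$ at $B$, so $CW(A,B)=\{\gamma\}$. The coefficients of these inequalities depend continuously on the edge lengths of $A$, and since $A$ sits in a maximal simplex one has $\cand(A')=\cand(A)$ for $A'$ nearby, so a routine continuity argument provides neighbourhoods $U_A \ni A$ and $U_B' \subseteq U_B$ on which all relevant inequalities remain strict at every $(A',B') \in U_A \times U_B'$. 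Combined with lemma~\ref{lem:looseCWinNeighbourhood}, which keeps new candidate witnesses from outside $\cand(A)$ from appearing, this yields $CW(A',B') = \{\gamma\} = CW(A,B)$ throughout $U_A \times U_B'$, which is exactly general position.

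The main obstacle I anticipate is cleanly passing between the topological statement ``$B$ lies in the interior of the polytope'' and the strict-inequality statement used above. In particular, one has to handle the possibility of an $\omega \in \cand(A) \setminus \{\gamma\}$ whose edge-count vector $(\#(e,\omega))_e$ is a positive multiple of $(\#(e,\gamma))_e$: the corresponding $(\star)$-inequality is then identically an equality and cannot be made strict, so it must be excluded from the collection of ``non-trivial'' defining inequalities of the polytope. Such an $\omega$ lies in $CW(A',B')$ exactly when $\gamma$ does, so it does not disturb local constancy of $CW$; once this caveat is acknowledged, the remaining continuity argument for the finitely many linear inequalities in simplex coordinates is entirely standard.
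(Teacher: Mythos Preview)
Your direction $(i)\Rightarrow(iii)$ is fine and matches the paper. The gap is in your handling of the degenerate $\omega$ in $(iii)\Rightarrow(i)$.

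You correctly observe that $B\in\Env_R^{out}(A,\gamma)^O$ forces every non-trivial $(\star)$-inequality to be strict at $B$, and that the only remaining $\omega\in CW(A,B)\setminus\{\gamma\}$ are those whose edge-count vector in $T(B)$ is proportional to that of $\gamma$, with ratio $l_A(\omega)/l_A(\gamma)$. But your assertion that ``such an $\omega$ lies in $CW(A',B')$ exactly when $\gamma$ does'' is not justified. Proportionality of the edge-counts in $T(B)$ gives $l_{B'}(\omega)/l_{B'}(\gamma)=l_A(\omega)/l_A(\gamma)$ for every $B'\in T(B)$; to conclude that $\omega$ and $\gamma$ have equal stretch from $A'$ to $B'$ you also need $l_{A'}(\omega)/l_{A'}(\gamma)=l_A(\omega)/l_A(\gamma)$ for every $A'\in T(A)$, i.e.\ proportionality of the edge-counts in $T(A)$ as well. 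Without this, perturbing $A$ in a direction that changes the ratio $l_{A'}(\omega)/l_{A'}(\gamma)$ makes one of $\gamma,\omega$ strictly dominate the other, so $CW(A',B')\subsetneq CW(A,B)$ and general position fails.

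This is precisely the point where the paper does real work: it argues that two distinct candidates of $A$ with proportional edge-count vectors in a maximal simplex must be a barbell (or figure-of-eight) and its counterpart with one cycle's orientation flipped. Such a pair automatically has identical edge-counts in \emph{every} graph, in particular in $T(A)$, which is what you need. The paper gives two different arguments for this identification: for $(ii)\Rightarrow(i)$ the proportionality is in $T(A)$ (where $\omega,\gamma$ are candidates) and the conclusion is direct, then the optimal Lipschitz map transports equality of edge-counts to $T(B)$; for $(iii)\Rightarrow(i)$ the proportionality is in $T(B)$, and the paper uses a basis-extension argument to rule out all pairs except barbell-flips. In particular your proposed symmetry reduction of $(ii)$ to $(iii)$ does not go through as stated, since swapping $A$ and $B$ changes the statement to ``$A$ is in general position to $B$'', which concerns $CW(B,A)$ rather than $CW(A,B)$.
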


\begin{Bew}
\glqq (i)$ \Rightarrow $(ii) and (iii)\grqq \ follows directly from the definition of general position for every $\gamma \in CW(A,B)$. 

\glqq (ii)$ \Rightarrow $(i)\grqq: By proposition \ref{3 basic properties out-envelopes} (iv) wlog. $\gamma \in CW(A,B)$. Let $\omega \neq \gamma$ with $\omega \in CW(A,B)$. Since $A$ is in the interior of $\Env_R^{in}(B, \gamma)$ we have by proposition \ref{3 basic properties in-envelopes} (v) $\Env_R^{in}(B, \omega)\cap T(B)=\Env_R^{in}(B, \gamma) \cap T(B)$. By the $(\star \star)$-equality for $\gamma$ and $\omega$ then follows that the edge-counts of $\gamma$ and $\omega$ in $A$ are multiples of each and thus $\gamma$ and $\omega$ are a barbel and its counterpart with a flipped orientation of a cycle (e.g. $xy$ and $xy^{-1}$). 

We will now show, that $\gamma$ and $\beta$ also have the the same edge-counts in $B$. Let $h: A \to B$ be the mark changing map realising the minimal Lipschitz constant (s. definition \ref{defi:Lipschitz-distance}). Since $\gamma$ and $\omega$ are witnesses, their images in $B$ are reduced paths. But since they have the same edge counts in $A$ this implies they have also the same edge counts in $B$. In particular they have also the same lengths in the simplex, i.e. $l_{A'}(\omega)=l_{A}(\gamma)$ and $l_{B'}(\omega)=l_{B}(\gamma)$ for all $A' \in T(A), B' \in T(B)$.

Therefore we have $\omega \in CW(A',B')$ for all $A' \in U_A$ and $B' \in U_B$ for $U_A  \subset T(A)$ and $U_B \subset T(B)$ small enough neighbourhoods of $A$ resp. $B$. Hence we showed $CW(A,B) \subseteq CW(A',B')$ and by lemma \ref{lem:looseCWinNeighbourhood} equality holds for small enough $U_A$ and $U_B$.

\glqq (iii)$ \Rightarrow $(i)\grqq: As before if $\omega \neq \gamma$ with $\omega \in CW(A,B)$ exists, it has a multiple of edge-counts as $\gamma$ in $B$. In particular the letter count of the word $\gamma*\omega \in F_n$ has parity two, which implies they can not be extended to a basis of $F_n$. But it is easy to check that each pair of disjoint candidates which are not a barbell (or a figure of eight) and its counterpart can be extended to a basis, as the following sketches:
\begin{itemize}
\item Let $\gamma, \omega \in F_n$ be such a pair and fix a spanning tree which contains as much as possible of $\gamma$, that means all up to one or two edges.
\item Label the edges according to the marking. The labels form a basis of the fundamental group and we can exchange one of these elements with $\gamma$ (if $\gamma$ is a simple closed loop, it is already one of the labels) and still have a basis.
\item If $\omega$ is a barbell consider it as a word $w_1 w w_2 w^{-1}$ in this basis where the subwords $w_1, w, w_2$ have a disjoint alphabet, hence we can again exchange a letter of $w_1$ or $w_2$ which is not contained in $\gamma$ (since $\omega$ is not its counterpart) and again get a basis. When $\omega$ is not a barbell, its word has either again a letter not contained in $\gamma$ or it is one of the two letters of $\gamma$.
\end{itemize}
Therefore $\gamma$ and $\omega$ must be as in the case (ii) and the claim follows.
\end{Bew}

Looking at the proof of proposition \ref{3 basic properties out-envelopes} (iii) we get that $\Env_R^{out}(A,\gamma)\cap T(A)$ has maximal dimension for all $\gamma \in \cand(A)$ near $A$. Applying this and lemma \ref{4_general_position_and_maximal_dimension} to $\EnvR(A,B)=\Env_R^{out}(A,\gamma)\cap \Env_R^{in}(B,\gamma)$ for some $\gamma \in CW(A,B)$ we get the following corollary:
 
\begin{Kor}\label{4_gen_pos_max_dimension}
Let $A,B \in CV_n$ be in maximal simplices.
\begin{enumerate}[a)]
\item We have $B$ is in general position to $A$ if and only if $\EnvR(A,B)$ has maximal dimension in $T(A)$.
\item In particular if $A,B$ are in the same maximal simplex and in general position to each other, we have $\dim(\Env(A,B))=3n-4$: Since $T(A)=T(B)$ we have
\[\Env(A,B)\cap T(A)=\Env_R(A,B)\cap \Env_R(B,A)\cap T(A).\]
The straight line $[A,B]$ lies in the interior of all the considered envelopes above. So in contrast to lemma \ref{4 geodesic_passing_face_looses_general} we have $\dim(\Env(A,B) \cap U)=3n-4$ for any neighbourhood $U$ of $B$.
\end{enumerate}
\end{Kor}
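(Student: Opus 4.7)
The plan is to assemble three earlier results: Lemma \ref{4_general_position_and_maximal_dimension}, the construction inside the proof of Proposition \ref{3 basic properties out-envelopes} (iii), and the decomposition $\EnvR(A,B) = \Env_R^{out}(A,\gamma) \cap \Env_R^{in}(B,\gamma)$ available for every $\gamma \in CW(A,B)$ from Remark \ref{3 remark basic properties out-envelopes} (i).

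For part (a), I would first handle the forward direction: assuming $B$ is in general position to $A$, Lemma \ref{4_general_position_and_maximal_dimension} provides some $\gamma$ with $A \in \Env_R^{in}(B,\gamma)^O$ and $B \in \Env_R^{out}(A,\gamma)^O$, and by Proposition \ref{3 basic properties out-envelopes} (iv) I may take $\gamma \in CW(A,B)$. Then $\Env_R^{in}(B,\gamma)$ contains an open neighbourhood of $A$ in $CV_n$, so its intersection with $T(A)$ is a neighbourhood of $A$ in the open simplex $T(A)$. The perturbation construction inside the proof of Proposition \ref{3 basic properties out-envelopes} (iii) exhibits an open subset of $T(A)$ accumulating at $A$ and contained in $\Env_R^{out}(A,\gamma)$. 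Intersecting the two yields an open subset of $T(A)$ inside $\EnvR(A,B)$, so $\dim(\EnvR(A,B)\cap T(A)) = 3n-4$. For the converse, I would fix any $\gamma \in CW(A,B)$ and observe that if $\EnvR(A,B)\cap T(A)$ has non-empty interior in $T(A)$ then so must $\Env_R^{in}(B,\gamma)\cap T(A)$; the crucial point to establish is that $A$ lies in the relative interior of this in-envelope, for otherwise a $(\star\star)$-equality at $A$ would cut the cone $\Env_R^{out}(A,\gamma)$ at $A$ down to a lower-dimensional piece, contradicting the assumed maximal dimension near $A$. Lemma \ref{4_general_position_and_maximal_dimension} then concludes.

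For part (b), the identity $T(A) = T(B)$ splits the symmetric envelope as $\Env(A,B)\cap T(A) = \EnvR(A,B)\cap\EnvR(B,A)\cap T(A)$. By Remark \ref{c2_geodesics_in_CVn} (ii) the straight segment $[A,B]$ inside the simplex is a symmetric geodesic, hence it lies in both asymmetric envelopes; general position ensures that along its open interior all $(\star)$ and $(\star\star)$ inequalities are strict, placing $[A,B]$ in the relative interior of both $\EnvR(A,B)\cap T(A)$ and $\EnvR(B,A)\cap T(A)$. Applying part (a) to both orientations shows that each of these asymmetric envelopes has dimension $3n-4$, and because their relative interiors share the segment $[A,B]$ so does their intersection, yielding $\dim(\Env(A,B)\cap U) = 3n-4$ for every neighbourhood $U$ of $B$, which is the promised contrast to Lemma \ref{4 geodesic_passing_face_looses_general}. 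I expect the hardest step to be the converse direction of part (a): converting the dimension hypothesis into the interior statement demanded by Lemma \ref{4_general_position_and_maximal_dimension} requires a careful local analysis of how the $(\star\star)$-equalities at $A$ truncate the cone $\Env_R^{out}(A,\gamma)$ emanating from $A$.
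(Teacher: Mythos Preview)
Your plan coincides with the paper's: the corollary is deduced from the decomposition $\EnvR(A,B)=\Env_R^{out}(A,\gamma)\cap\Env_R^{in}(B,\gamma)$ for $\gamma\in CW(A,B)$, the fact (from the construction in Proposition~\ref{3 basic properties out-envelopes}~(iii)) that $\Env_R^{out}(A,\gamma)\cap T(A)$ is full-dimensional near $A$, and Lemma~\ref{4_general_position_and_maximal_dimension}. Your forward direction of (a) and your treatment of (b) are correct and match the paper.

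For the converse of (a) your heuristic is not quite right as stated. A $(\star\star)$-equality at $A$ only tells you that the in-envelope $\Env_R^{in}(B,\gamma)$ is bounded near $A$ by a half-space through $A$, and intersecting a full-dimensional cone with a half-space through its apex does \emph{not} in general lower the dimension --- a generic such half-space leaves a full-dimensional half-cone. The observation you are missing is that the $(\star\star)$-equality at $A$ produces a second candidate witness $\delta\in CW(A,B)$, and then Lemma~\ref{keep witness} forces $\delta\in W_R(C,B)$ for \emph{every} $C\in\EnvR(A,B)$. Hence the whole envelope lies on the $(\star\star)$-\emph{equality} hyperplane $\{C:\ l_B(\gamma)\,l_C(\delta)=l_B(\delta)\,l_C(\gamma)\}$ (equivalently on the corresponding $(\star)$-equality hyperplane with coefficients $l_A$), not merely on one side of it. This hyperplane is proper in $T(A)$ precisely when $\gamma$ and $\delta$ have non-proportional edge-counts there, i.e.\ are not a barbell pair, and the analysis inside the proof of Lemma~\ref{4_general_position_and_maximal_dimension} shows that failure of general position always produces such a pair. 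With this in hand the converse is immediate.
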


We use now corollary \ref{4_gen_pos_max_dimension} b) and lemma \ref{4 geodesic_passing_face_looses_general} to distinguish faces with envelopes in the symmetric metric. Since envelopes are preserved under isometries we then get the following theorem:

\begin{Satz}\label{4 distinguish faces in CVn}
An isometry in regard of the symmetric Lipschitz-metric of $CV_n^{red}$ is simplicial.
\end{Satz}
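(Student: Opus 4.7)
Since $h$ is an isometry of the symmetric Lipschitz metric, it preserves path length and therefore symmetric geodesics, hence $h(\Env(A,B))=\Env(h(A),h(B))$ for all $A,B\in CV_n^{red}$. Being an isometry, $h$ is in particular a homeomorphism, so it preserves the topological (equivalently polytope) dimension of a set intersected with any open neighbourhood. The plan is to read off the simplicial stratum of a point from a purely dimensional property of the nearby symmetric envelopes. Combining corollary \ref{4_gen_pos_max_dimension}\,b) with lemma \ref{4 geodesic_passing_face_looses_general} supplies the required dichotomy between interior and face points.

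\textbf{Main step: maximal simplex interiors go to maximal simplex interiors.} Let $C$ lie in the interior of a maximal simplex $\Delta$ and write $C':=h(C)$. Suppose for contradiction that $C'$ lies on a face. Applying lemma \ref{4 geodesic_passing_face_looses_general} to $C'$, I obtain points $A',B'$ arbitrarily close to $C'$ and open neighbourhoods $U_{A'}\ni A'$, $U_{B'}\ni B'$ such that for all $P'\in U_{A'}$, $Q'\in U_{B'}$,
\[
\dim\bigl(\EnvR(P',Q')\cap U_{A'}\bigr)\neq \dim\bigl(\EnvR(P',Q')\cap U_{B'}\bigr),
\]
and from the construction in that lemma one of the two sides, say the $U_{B'}$-side, has dimension at most $3n-5$. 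By continuity of $h^{-1}$ and $C\in\mathrm{int}(\Delta)$ I may shrink $U_{A'},U_{B'}$ so that $V_P:=h^{-1}(U_{A'})$ and $V_Q:=h^{-1}(U_{B'})$ lie inside $\mathrm{int}(\Delta)$. Pick any $P\in V_P$, and by the density statement of proposition \ref{general_position_dense} pick $Q\in V_Q$ in general position to $P$. Corollary \ref{4_gen_pos_max_dimension}\,b) then gives $\dim(\Env(P,Q)\cap V_Q)=3n-4$. On the other hand, using $\Env\subseteq\EnvR$ and transporting by the homeomorphism $h$,
\[
\dim\bigl(\Env(P,Q)\cap V_Q\bigr)=\dim\bigl(\Env(h(P),h(Q))\cap U_{B'}\bigr)\leq \dim\bigl(\EnvR(h(P),h(Q))\cap U_{B'}\bigr)\leq 3n-5,
\]
which is the desired contradiction. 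Hence $h(C)$ lies in the interior of a maximal simplex. Applying the same argument to $h^{-1}$, $h$ induces a bijection between the interiors of the maximal simplices.

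\textbf{Lower skeleta.} The complement $X^{(1)}:=CV_n^{red}\setminus\bigsqcup_{\Delta\text{ maximal}}\mathrm{int}(\Delta)$ is the union of all proper faces and is thus sent to itself by $h$. Because $h$ also carries $\overline{\Delta}\cap X^{(1)}=\partial\Delta$ homeomorphically onto $\partial h(\Delta)$ for each maximal simplex $\Delta$, $h$ sends codimension-$1$ faces of $CV_n^{red}$ to codimension-$1$ faces; the induction continues by restricting the argument to each face, realised as the top-dimensional stratum of a lower-dimensional subcomplex whose local topology and dimension are homeomorphism invariants.

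\textbf{Main obstacle.} The delicate point is the mismatch between the envelope invariant available under a symmetric isometry (the symmetric $\Env$) and the envelope to which the face-detection lemma \ref{4 geodesic_passing_face_looses_general} directly applies (the asymmetric $\EnvR$). The bridge is the trivial inclusion $\Env\subseteq\EnvR$, which converts the upper bound on the local dimension of $\EnvR$ near one endpoint into the same upper bound for $\Env$; this upper bound then contradicts the matching lower bound $3n-4$ provided by corollary \ref{4_gen_pos_max_dimension}\,b) at general-position pairs in a common maximal simplex. A secondary issue is arranging such a general-position pair inside the open region supplied by lemma \ref{4 geodesic_passing_face_looses_general}, which is handled by first fixing $P\in V_P$ and then appealing to density of general-position partners inside $V_Q$ via proposition \ref{general_position_dense}.
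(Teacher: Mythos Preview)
Your argument follows essentially the paper's approach and is correct in the main step; your explicit use of the inclusion $\Env\subseteq\EnvR$ even makes transparent a passage the paper leaves implicit when it invokes lemma \ref{4 geodesic_passing_face_looses_general} (which concerns $\EnvR$) to bound the dimension of the symmetric envelope.

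One small repair is needed: corollary \ref{4_gen_pos_max_dimension}\,b) requires $P$ and $Q$ to be in general position \emph{to each other}, i.e.\ both directions, whereas you only arrange $Q$ in general position to $P$. This is harmless, since by proposition \ref{general_position_dense} both conditions define open dense subsets of $V_Q$ for fixed $P$, so you may choose $Q$ in their intersection. For the lower skeleta the paper supplies a slightly more concrete invariant than your ``local topology and dimension'': a point of the $(3n-6)$-skeleton (one $5$-valent or two $4$-valent vertices) is adjacent to at least four $(3n-5)$-dimensional simplices in $CV_n^{red}$, whereas an interior point of a codimension-one face is not, and this count is preserved by the homeomorphism $h$.
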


\begin{Bew}
Let $\varphi \in \Isom(CV_n^{red})$ and $C \in CV_n$ be a point in a maximal simplex. Assume $\varphi(C)$ lies on a face and let $U \subset T(C)$ be a neighbourhood of $C$ contained in the maximal simplex.

Let $\varphi(A), \varphi(B) \in \varphi(U)$ be as in the proof of lemma \ref{4 geodesic_passing_face_looses_general}. In particular $\varphi(A)$ and $\varphi(B)$ are in maximal simplices and $CW(\varphi(A),\varphi(B))=\{xy\}$ with $x,y \in F_n$ as in the proof of lemma \ref{4 geodesic_passing_face_looses_general}. By lemma \ref{lem:looseCWinNeighbourhood} there exist neighbourhoods $\varphi(U_A), \varphi(U_B) \subseteq \varphi(U)$ such that for all $\varphi(A') \in \varphi(U_A), \varphi(B') \in \varphi(U_B)$ we have $CW(\varphi(A'),\varphi(B'))=\{xy\}$ and hence $\varphi(B')$ is in general position to $\varphi(A')$. By lemma \ref{4 geodesic_passing_face_looses_general} we can choose $U_B$ small enough such that $\Env(\varphi(A'),\varphi(B')\cap U_B)$ has not maximal dimension.

On the other hand since being in general position is a dense property (s. proposition \ref{general_position_dense}) we find $A' \in U_A$ and $B' \in U_B$ which are in general position to each other. By corollary \ref{4_gen_pos_max_dimension} b) $\Env(A',B')$ has near $B'$ maximal dimension. But $\varphi$ is an isometry and hence restricts to an isometry of $\Env(A',B')$ to $\Env(\phi(A'),\phi(B'))$. In particular it preserves the dimension of envelopes, hence the fact that $\varphi(C)$ lies on a face leads to a contradiction.

We have thus prooved that $\varphi$ sends maximal simplices to maximal simplices and thus maps the $3n-5$-skeleton of $CV_n$ to the $3n-5$-skeleton.
If $C$ belongs to the $3n-6$-skeleton of $CV_n$, then $C$ has either two $4$-valent vertices or one $5$-valent vertex which can be resolved in at least 4 different ways while staying in the reduced case. This means $C$ belongs to a face of more than two simplices of dimension $3n-5$. So by topological reasons $\varphi(C)$ must belong to the $3n-6$-skeleton of $CV_n$. Inductively $\varphi$ sends each simplex to a simplex of the same dimension.
\end{Bew}

We can now follow the proof of Stefano Francaviglias and Armando Martinos in \cite{FM12} for the non-reduced Outer Space and get the same result for reduced Outer Space, namely:

\begin{restatable}{theorem}{isomRedCV}
\label{THEOREM2}
The isometry groups of $CV_n^{red}$ in regard of the symmetric and both asymmetric Lipschitz-metrics are the same as in the non-reduced case: \[\Isom(CV_n^{red})=\Isom(CV_n)=\begin{cases}
\Out(F_n)&, \text{ if } n \geq 3\\
\PGL(2,\Z)&, \text{ if } n=2
\end{cases}\]
\end{restatable}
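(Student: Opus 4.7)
The proof splits into three steps: the easy inclusion, the simpliciality input, and an appeal to \cite{FM12}.

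The inclusions $\Out(F_n) \subseteq \Isom(CV_n^{red})$ for $n \geq 3$ and $\PGL(2,\Z) \subseteq \Isom(CV_2^{red})$ are immediate. The natural right action of $\Out(F_n)$ on $CV_n$ by change of marking preserves the underlying graph, and in particular preserves the property of having no separating edges, so $CV_n^{red}$ is invariant. Since precomposing a marking does not alter the Lipschitz constant of a change-of-marking map, this action is isometric for all three Lipschitz metrics. For $n=2$ the kernel of the action of $\Out(F_2) \cong \mathrm{GL}(2,\Z)$ on the projectivized space $CV_2$ is $\{\pm I\}$, yielding the effective action of $\PGL(2,\Z)$.

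For the reverse inclusion, let $\varphi \in \Isom(CV_n^{red})$ for any of the three metrics. By Theorem \ref{4 distinguish faces in CVn} (in the symmetric case) and the preceding theorem on the asymmetric metric, $\varphi$ is simplicial. The plan is then to apply the argument of Francaviglia--Martino in \cite{FM12} essentially verbatim: their proof takes as input a simplicial isometry of $CV_n$ and rigidifies it to an element of $\Out(F_n)$ (respectively $\PGL(2,\Z)$) by analysing its action on marked roses and on the Nielsen-type transformations between adjacent rose-simplices. Every marked rose lies in $CV_n^{red}$, and each such Nielsen transformation can be realised through intermediate reduced graphs, so the combinatorial skeleton of the argument lives entirely inside $CV_n^{red}$.

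The main obstacle will be verifying that all the combinatorial data used in \cite{FM12} is genuinely preserved when passing from $CV_n$ to $CV_n^{red}$. Concretely, one must check that the star of each rose inside the reduced subcomplex contains all the adjacent simplices needed to distinguish different simplicial automorphisms, or equivalently, that no step of the rigidity argument requires passing through a graph with a separating edge. Once this verification is complete, the Francaviglia--Martino argument carries over line by line and yields the claimed identification in all three metrics simultaneously.
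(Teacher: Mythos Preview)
Your proposal is correct and matches the paper's approach exactly: the paper gives no proof for this theorem beyond the sentence ``We can now follow the proof of Stefano Francaviglia and Armando Martino in \cite{FM12} for the non-reduced Outer Space and get the same result for reduced Outer Space,'' relying on the simpliciality established in Theorem~\ref{4 distinguish faces in CVn} (and its asymmetric analogue) as the only new ingredient. If anything, your write-up is more explicit than the paper's, since you spell out the easy inclusion and flag the verification that the \cite{FM12} argument stays inside $CV_n^{red}$, which the paper leaves implicit.
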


\begin{Bem}
Another way to distinguish faces is directly by the property of general position. By corollary \ref{4_gen_pos_max_dimension} the property of general position of two points is preserved under isometries if both points are sent into maximal simplices. If $g: [0,1] \to CV_n$ is a straight line in a maximal simplex, then by corollary \ref{4_general_position_and_maximal_dimension} b) $g(1)$ is in general position to $g(0)$ if and only if $g(t)$ is in general position to $g(s)$ for all $0 \leq s < t \leq 1$. On the other hand let $A$ and $B$ as in lemma \ref{4 geodesic_passing_face_looses_general} and $g: [0,1] \to CV_n$ any geodesic from a $A$ to $B$. Then there always exists $0<s<t<1$, such that $g(s)$ and $g(t)$ are not in general position, since as soon as $g$ passes the face it has to lie in an envelope of smaller dimension. This behaviour just relies on the fact, that the coarse direction of $g$ is not a candidate of (the simplex of) $B$.
\end{Bem}

Having this in mind we can actually deduce similar behaviour for all geodesic rays, namely that if a geodesic runs long enough it looses one dimension of freedom or in other words gains a small amount of rigidity:

\begin{Prop}\label{4_long_geodesics_not_general}
Let $g: \R_{\geq 0} \to CV_n$ be an asymmetric geodesic ray parametrized by length, then there exists a $T \in \R$ s.t. for all $T<s<t$ we have $\dim(\Env_R(g(s),g(t)) \leq \dim(CV_n)-1$. In particular $g(t)$ is not in general position to $g(s)$.
\end{Prop}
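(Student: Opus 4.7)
The plan is to show that along a length-parametrised geodesic ray a candidate witness $\gamma$ of $g(0)$ persists as a witness forever while its length grows exponentially; this eventually makes $\gamma$ fail to be a candidate of the current simplex $T(g(s))$, and from Theorem~\ref{1 candidate is enough} one then extracts a second candidate witness genuinely different from $\gamma$. The pair forces the envelope to lose a dimension, via Corollary~\ref{4_gen_pos_max_dimension}(a).

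First I would observe that by Lemma~\ref{keep witness} the family $\{CW(g(0),g(s))\}_{s\geq 0}$ is a nested decreasing family of subsets of the finite set $\cand(T(g(0)))$, hence it stabilises at some $T_0\geq 0$. For any $\gamma$ in the stable intersection, Lemma~\ref{keep witness} yields $\gamma\in W_R(g(s),g(t))$ for all $T_0\leq s\leq t$. Since $g$ is parametrised by length, normalising volumes gives $l_{g(s)}(\gamma)=l_{g(0)}(\gamma)\,e^{s}\to\infty$, whereas any candidate in any topological type traverses each edge at most twice and hence has length at most $2$. Consequently there is $T\geq T_0$ such that, for every $s\geq T$, $\gamma\notin\cand(T(g(s)))$, while Theorem~\ref{1 candidate is enough} still produces a candidate witness $\omega\in CW(g(s),g(t))$, necessarily distinct from $\gamma$.

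The heart of the argument is to upgrade this into a second non-equivalent candidate witness. I would look at the tension subgraph $\Gamma_{\mathrm{tens}}\subseteq T(g(s))$ of an optimal Lipschitz map $g(s)\to g(t)$: an element of $F_n$ is a witness exactly when its cyclically reduced image lies in $\Gamma_{\mathrm{tens}}$. Because $\gamma$ is a witness but very long in $T(g(s))$, its image cannot be contained in a single embedded simple loop, for otherwise $\gamma$ would be a power of the primitive element represented by that loop, contradicting the fact that candidates in $T(g(0))$ are never proper powers. Hence $\Gamma_{\mathrm{tens}}$ carries at least two candidate loops that are not barbell counterparts of one another, and both are automatically witnesses; thus $|CW(g(s),g(t))|\geq 2$ with a non-counterpart pair. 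Lemma~\ref{4_general_position_and_maximal_dimension} then prevents $g(t)$ from being in general position to $g(s)$, and Corollary~\ref{4_gen_pos_max_dimension}(a) yields $\dim\Env_R(g(s),g(t))\leq\dim CV_n-1$.

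The main obstacle is the tension-subgraph step, where one has to identify the possible topological types of $\Gamma_{\mathrm{tens}}$ and exhibit two candidate witnesses that are not barbell counterparts of each other. I would handle this by a short case-analysis on the Euler characteristic of $\Gamma_{\mathrm{tens}}$, ruling out the single-simple-loop case by the power argument above and observing that each remaining case (figure of eight, theta, barbell, or richer subgraph) plainly carries at least two non-equivalent candidates. An alternative route is to decompose $\gamma$, viewed as an immersed loop in $T(g(s))$, as a reduced concatenation of sub-loops at repeated vertices and to invoke the weighted-average identity from the proof of Lemma~\ref{4 geodesic_passing_face_looses_general} to see that each sub-loop is again a witness, so that at least two non-proportional candidate witnesses emerge unless $\gamma$ happens to be a proper power.
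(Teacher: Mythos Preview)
Your opening paragraph is right and matches the paper exactly: a candidate witness $\gamma\in CW(g(0),g(t))$ persists for all $t$, Lemma~\ref{keep witness} propagates it to $W_R(g(s),g(t))$, the length grows exponentially, and past some $T$ we have $l_{g(s)}(\gamma)>2\vol(g(s))$ so that $\gamma\notin\cand(g(s))$, while Theorem~\ref{1 candidate is enough} still supplies some $\omega\in CW(g(s),g(t))$.

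The gap is in your ``heart of the argument''. The assertion that an element of $F_n$ is a witness exactly when its cyclically reduced realisation lies in the tension subgraph is false in the backward direction: lying in $\Gamma_{\mathrm{tens}}$ is necessary, but one also needs the loop to be \emph{legal} for the train-track structure the optimal map induces there. So even though the subgraph traced out by $\gamma$ has rank $\geq 2$ and therefore contains two non-counterpart candidates, nothing guarantees both are legal, and your conclusion that $CW(g(s),g(t))$ contains a non-counterpart pair does not follow. The alternative decomposition route has a parallel problem: splitting $\gamma$ at a repeated vertex produces sub-loops whose cyclic reductions may be strictly shorter in $g(s)$, so the identity $l_{g(s)}(\gamma)=l_{g(s)}(\gamma_1)+l_{g(s)}(\gamma_2)$ can fail and the averaging argument no longer shows both pieces are witnesses; and even when it does, the pieces need not be candidates.

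The paper bypasses all of this. It never looks for a second candidate witness; it uses the pair $(\gamma,\omega)$ directly, with $\gamma$ the persistent \emph{non}-candidate witness and $\omega$ any element of $CW(g(s),g(t))$. Since both are witnesses, every point of $\Env_R^{out}(g(s),\gamma)$ in $T(g(s))$ satisfies the $(\star)$-equality
\[
\sum_{e_i\in E(g(s))} l_C(e_i)\bigl(l_{g(s)}(\omega)\,\#(e_i,\gamma)-l_{g(s)}(\gamma)\,\#(e_i,\omega)\bigr)=0,
\]
and the paper argues this linear equation is non-trivial: if all coefficients vanished the edge-count vectors would be proportional with ratio $l_{g(s)}(\gamma)/l_{g(s)}(\omega)>1$, which is ruled out because $\gamma$ extends to a basis of $F_n$. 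Hence $\Env_R^{out}(g(s),\gamma)\cap T(g(s))$ lies in a proper hyperplane, so $\dim\Env_R(g(s),g(t))\leq 3n-5$, and Corollary~\ref{4_gen_pos_max_dimension} finishes. The point you are missing is that the single non-candidate witness $\gamma$ already cuts a genuine face of the out-envelope against the candidate $\omega$; there is no need to manufacture a second candidate witness.
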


\begin{Bew}
Let $\gamma \in \cap_{t \in \R} CW(g(0),g(t))$ a coarse direction of $g$. Since $g$ is a (global) geodesic ray, such a $\gamma$ exists and by lemma \ref{keep witness} we have $\gamma \in W(g(s),g(t))$ for all $0<s<t$. Since $\gamma$ is a witness for all elements of the path, its length is stretched exponentionally to the length path, hence there exists a $T>0$ with $l_{g(s)}(\gamma) > 2 \vol(g(s))$ for all $s>T$. In particular $\gamma$ is not a candidate for all $g(s)$ with $s>T$ since at least one edge is covered by $\gamma$ thrice.\\
Let now $\omega \in CW(g(s),g(t))$ for some $t>s>T$. Then the equalities in $(\star)$ for $A=g(s)$ and $\omega$ and $\gamma$ yield a non-trivial restriction for $g(t)$, else we would have $(l_{g(s)}(\omega) \cdot \#(e_i, \gamma)- l_{g(s)}(\gamma) \cdot \#(e_i, \omega))=0$ for all edges $e_i$, in other words in the abelianization of $F_n$ $\gamma$ would be a $l_{g(s)}(\gamma)/l_{g(s)}(\omega)$ multiple of $\omega$. But at least one edge is covered multiple times by $\gamma$ and thus $l_{g(s)}(\gamma)>l_{g(s)}(\omega)$, hence $\gamma$ would be a proper multiple in the abelianization. But $\gamma$ can be extended to a basis of $F_n$ an thus to a basis of the abelianization.\\
Since the geodesic ray has to satisfy this nontrivial equality for $t>T$ we have \linebreak
$\dim(\Env_R^{out}(g(s))) < 3n-4$ and by corollary \ref{4_gen_pos_max_dimension} $g(t)$ is not in general position to $g(s)$.
\end{Bew}

Since a geodesic is rigid if and only if the envelope has everywhere dimension 1, we get the following corollary:

\begin{Kor}
Let $g: \R_{\geq 0} \to CV_2$ be an asymmetric geodesic ray parametrized by length, then there exists a $T\geq 0$ such that $g|_{\R_{\geq T}}$ is a rigid geodesic ray.
\end{Kor}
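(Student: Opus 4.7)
The plan is to combine Proposition~\ref{4_long_geodesics_not_general} with the low dimension of $CV_2$. Since $\dim CV_2 = 3\cdot 2 - 4 = 2$, the bound $\dim \Env_R(g(s), g(t)) \leq \dim CV_n - 1$ becomes $\leq 1$ along the tail of the ray; combined with the observation (made by the author immediately before the Corollary) that a geodesic is rigid iff its envelope has dimension $1$ everywhere, this yields the result.

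Concretely, first I would apply Proposition~\ref{4_long_geodesics_not_general} to $g$ to obtain $T \geq 0$ such that $\dim \Env_R(g(s), g(t)) \leq 1$ for all $T < s < t$. Since $g$ is parametrized by length we have $g(s) \neq g(t)$, and $g|_{[s,t]}$ is a non-trivial geodesic segment contained in $\Env_R(g(s), g(t))$. Therefore the envelope has dimension at least $1$, hence exactly $1$, for every such pair.

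Next I would argue that this $1$-dimensionality forces $g|_{[s,t]}$ to be the unique geodesic joining its endpoints. By Lemma~\ref{wiggling polytope is polytope} the envelope is a convex polytope in each simplex, so being at most $1$-dimensional its intersection with every simplex is a single line segment; the envelope is thus a finite union of edges along which any geodesic from $g(s)$ to $g(t)$ must travel. Since inside the ambient $1$-dimensional $\Env_R(g(s),g(t))$ the out-envelope $\Env_R^{out}(g(s), W_R(g(s),g(t)))$ and the in-envelope $\Env_R^{in}(g(t), W_R(g(s),g(t)))$ also degenerate to unions of edges, Theorem~\ref{theorem:unique_geodesics_are_edges} applies and yields that $g|_{[s,t]}$ is rigid for every $T<s<t$. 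Taking the limit $s \searrow T$ then shows that $g|_{\mathbb{R}_{\geq T}}$ is a rigid geodesic ray.

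The main obstacle is the uniqueness step. A $1$-dimensional convex polytope inside a single simplex is automatically a line segment, but across simplex boundaries a $1$-dimensional envelope could in principle branch, leaving room for two distinct geodesics of equal length following different branches. This is what Theorem~\ref{theorem:unique_geodesics_are_edges} is designed to rule out: it characterises rigid geodesics as those travelling along edges of the appropriate out- and in-envelopes, and inside a $1$-dimensional ambient envelope these out/in-envelopes cannot exceed dimension $1$ either, so the characterisation is met and the branching scenario is excluded.
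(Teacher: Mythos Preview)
Your overall strategy matches the paper's exactly: apply Proposition~\ref{4_long_geodesics_not_general}, use $\dim CV_2=2$ to get $\dim\Env_R(g(s),g(t))\leq 1$, and then invoke the principle (stated by the author immediately before the corollary) that a geodesic is rigid precisely when its envelopes are one-dimensional.

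There is, however, a slip in your justification of that last principle. You write that the out- and in-envelopes sit ``inside the ambient $1$-dimensional $\Env_R(g(s),g(t))$'', but the containment runs the other way: by Remark~\ref{3 remark basic properties out-envelopes}(i) the envelope is the \emph{intersection} of the out- and in-envelopes, so each of these contains $\Env_R(g(s),g(t))$ rather than lying inside it. Hence the $1$-dimensionality of the envelope does not directly force $\Env_R^{out}(g(s),W_R(g(s),g(t)))$ to be $1$-dimensional, and your invocation of Theorem~\ref{theorem:unique_geodesics_are_edges} is not yet justified as written. A clean repair is to observe that for any $s'\in[s,t)$ the $(\star\star)$-inequalities are slack near $g(s')$ (cf.\ the proof of Lemma~\ref{4 geodesic_passing_face_looses_general} and Proposition~\ref{3 basic properties out-envelopes}(iii)), so in a neighbourhood of $g(s')$ the envelope $\Env_R(g(s'),g(t))$ and the out-envelope $\Env_R^{out}(g(s'),W_R(g(s'),g(t)))$ coincide; this forces the latter to be $1$-dimensional there, putting the geodesic on an edge at every point, after which Lemma~\ref{3 Lemma not leaving hyperplanes and unique geodesics}(iii) (or Theorem~\ref{theorem:unique_geodesics_are_edges}) yields rigidity. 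The paper itself does not spell out this step, so your level of detail is already beyond what the author provides; only the direction of the containment needs correcting.
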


On the other hand proposition \ref{4_long_geodesics_not_general} is sharp for every $n$, i.e. there exists long geodesic rays $g: \R_{\geq 0} \to CV_n$ with $\dim(\Env_R(g(s),g(t)))=\dim(CV_n)-1$ for all $0<s<t$.

\begin{Bsp}
Let $A_0 \in CV_2$ be the figure of eight 
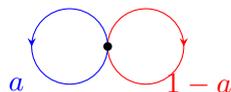
\begin{figure}[h]
\center
\begin{tikzpicture}
\draw[red,postaction={decorate,decoration={
		markings, mark=at position .5 with {\arrow{stealth}} }}] (0,0) arc (180:-180:1/2);
\node[blue] at (-1.2,-0.5) {$a$};
\draw[blue,postaction={decorate,decoration={
		markings, mark=at position .5 with {\arrow{stealth}} }}] (0,0) arc (0:360:1/2);
\draw[fill] circle [radius=0.05];
\node[red] at (1.2,-0.5) {$1-a$};
\end{tikzpicture}
\caption{marked figure of eight}
\end{figure}
with marking $\textcolor{blue}{x}$ and $\textcolor{red}{y}$ and edge lengths $\textcolor{blue}{a}$ and $\textcolor{red}{1-a}$ for $a=\frac{\sqrt{5}-1}{2}$.
If we choose $g$ to be the geodesic ray starting at $A_0$ with direction $\{xy\}$ which is the same direction as $\{x,y\}$ a short calculation shows, that $g$ is the unique geodesic from $A_0$ to $A_1$ where $A_1$ is the figure of eight with marking $\textcolor{red}{xy^{-1}}, \textcolor{blue}{y}$ and again with edge-lengths $\textcolor{red}{1-a}$ and $\textcolor{blue}{a}$.
In terms of envelopes the simplex $\Delta_0$ containing $A_0$ and $A_1$ is covered by the envelopes of $\Env_R^{out}(A_0,x)$ and $\Env_R^{out}(A_0,y)$ and $g$ is exactly the intersections of those two envelopes and hence rigid. 

Furthermore for the adjacent $\theta$-simplex $\Delta_1$  containing $A_1$ the out-envelopes \linebreak
$\Env_R^{out}(A_1,x)$ and $\Env_R^{out}(A_1,y)$ lay in the interior of $\Delta_1$, hence the envelope \linebreak
$\EnvR^{out}(A_1, \{x,y\}) \cap \Delta_1$ is one dimensional and by proposition \ref{3 basic properties out-envelopes} (iv) it is the intersection of out-envelopes of candidates of $A_1$. But these look exactly like for $A_0$ and $\Delta_0$, i.e. there are only two envelopes belonging to $xy^{-1}$ and $y$ and hence $\EnvR^{out}(A_1, \{x,y\}) \cap \Delta_1$ intersects the next face at $A_2$ with again edge lengths $a$ and $1-a$.

Since we can continue $g$ along the out-envelopes with coarse direction $\{x,y\}$  we inductively get that $g$ is a rigid geodesic ray with infinite length. Figure \ref{fig: infinite long geodesic} is a picture of $g$ where the letters in the brackets denote the marking of the figures of eights.

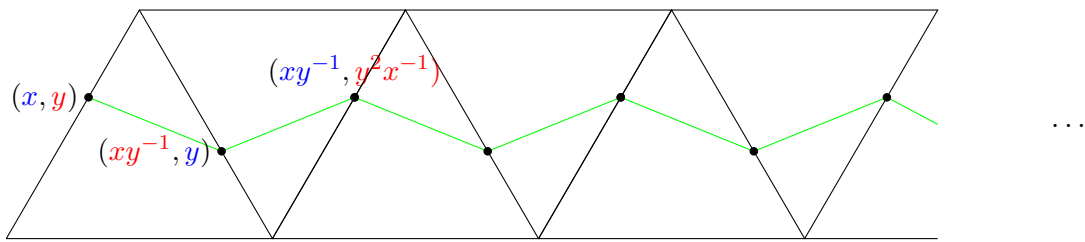
\begin{figure}[h]\label{fig: infinite long geodesic}
\center
\begin{tikzpicture}[scale=3.5]
\draw[green] (0.309017, 0.535233)--(0.809017, 0.33079)--(1.309017,0.535233);
\draw (0,0)--(1,0) -- (0.5, 0.8660254) -- (0,0);
\draw (0.5, 0.8660254) --  (1.5, 0.8660254) -- (1,0);
\filldraw (0.309017, 0.535233) circle[radius=0.4pt] node[left]{$(\textcolor{blue}{x},\textcolor{red}{y})$};
\filldraw (0.809017, 0.33079) circle[radius=0.4pt] node[left]{$(\textcolor{red}{xy^{-1}},\textcolor{blue}{y})$};
\filldraw (1.309017, 0.535233) circle[radius=0.4pt] node[above]{$(\textcolor{blue}{xy^{-1}},\textcolor{red}{y^2x^{-1})}$};

\begin{scope}[shift={(1,0)}]
\draw[green] (0.309017, 0.535233)--(0.809017, 0.33079)--(1.309017,0.535233);
\draw (0,0)--(1,0) -- (0.5, 0.8660254) -- (0,0);
\draw (0.5, 0.8660254) --  (1.5, 0.8660254) -- (1,0);
\filldraw (0.309017, 0.535233) circle[radius=0.4pt];
\filldraw (0.809017, 0.33079) circle[radius=0.4pt];
\filldraw (1.309017, 0.535233) circle[radius=0.4pt];

\end{scope}
\begin{scope}[shift={(2,0)}]
\draw[green] (0.309017, 0.535233)--(0.809017, 0.33079)--(1.309017,0.535233)--(1.5, 0.433);
\draw (0,0)--(1,0) -- (0.5, 0.8660254) -- (0,0);
\draw (0.5, 0.8660254) --  (1.5, 0.8660254) -- (1,0);
\filldraw (0.309017, 0.535233) circle[radius=0.4pt];
\filldraw (0.809017, 0.33079) circle[radius=0.4pt];
\filldraw (1.309017, 0.535233) circle[radius=0.4pt];

\end{scope}
\draw (3,0)--(3.5,0);
\node at (4, 0.43) {\dots};
\end{tikzpicture}

\caption{An infinite long rigid geodesic ray in $CV_2$} 
\end{figure}

If we now consider the graphs $A_i'$ where we add to each $A_i$ the same additional marked subgraph, e.g. a bouqet of roses with petal length 1 and the same marking.
\[
\begin{tikzpicture}
\node at (-2,0) {$A_i':=$};
\draw[green] (0,0) to [out=95,in=200]  (-0.4,1) to [out=20,in=95] (0,0);
\draw[green] (0,0) to [out=85,in=-20]  (0.4,1) to [out=160,in=85] (0,0);
\draw[green, thick, dotted] (-0.15,1.051) to [out=30,in=150]  (0.15,1.051);
\draw[red,postaction={decorate,decoration={
		markings, mark=at position .5 with {\arrow{stealth}} }}] (0,0) arc (180:-180:1/2);
\node[blue] at (-1.2,-0.5) {$a$};
\draw[blue,postaction={decorate,decoration={
		markings, mark=at position .5 with {\arrow{stealth}} }}] (0,0) arc (0:360:1/2);
\draw[fill] circle [radius=0.05];
\node[red] at (1.2,-0.5) {$1-a$};
\end{tikzpicture}\]
it is easy to check, that the added green part doesn't distribute anything to the distance to the other $A_j'$. Hence we can slightly vary the corresponding green edges without changing the distance. In particular we get that $g'$ as the concatenation of the straight edges between the $A_i'$ is an infinite geodesic ray with $\EnvR(g(s),g(t))=\dim(CV_n)-1$ for all $0\leq s < t$ since the green part is never covered by a witness from $g(s)$ to $g(t)$.
\end{Bsp}

\section{Appendix}
\begin{Bem}[Calculations for \ref{non_geodesic_ball_dim2}]\label{bem:app_calc_non_geodesic}
Let $X$ be the figure of eight graph with marking $\textcolor{blue}{x}$ and $\textcolor{red}{y}$ and edge length $a$ and $1-a$ as in figure \ref{fig:egged_graphs_rem5.1}. Let furthermore $A, C$ be as in figure \ref{fig:egged_graphs_rem5.1} the two differently marked theta-graphs in the neighbourhood of $X$ with edge lengths $a^\pm = a \pm \delta, \varepsilon$ and $1-a^\pm-\varepsilon$ for some small enough $\delta,\varepsilon>0$.

\begin{figure}[h]\label{fig:egged_graphs_rem5.1}
\center
\begin{tikzpicture}[decoration={
		markings, mark=at position .5 with {\arrow{stealth}} }]

\draw (-0.2,0) --node[above]{$\varepsilon$} (0.2,0);
\draw[red] (-0.2,0) to[out=180, in=180] (0,-1) to[out=0, in=0] (0.2,0);
\draw[red,postaction={decorate,decoration}](0.099,-1)--(0.1,-1);
\node[above] at (0,1) {$a^+$};
\draw[blue] (-0.2,0) to[out=180, in=180] (0,1) to[out=0, in=0] (0.2,0);
\draw[blue,postaction={decorate,decoration}](0.099,1)--(0.1,1);
\node[below] at (0,-1) {$1-(a^++\varepsilon)$};
\filldraw (-0.2,0) circle[radius=1pt];
\filldraw (0.2,0) circle[radius=1pt];

\begin{scope}[shift={(2.5,0)}]
\draw[blue, postaction={decorate,decoration}] (0,0) arc (-90:-450:1/2);
\node[above] at (0,1) {$a$};
\draw[red, postaction={decorate,decoration}] (0,0) arc (90:450:1/2);
\node[below] at (0,-1) {$1-a$};
\filldraw (0,0) circle[radius=1pt];

\begin{scope}[shift={(2.5,0)}]
\filldraw (-0.2,0) circle[radius=1pt];
\draw (-0.2,0) --node[above]{$\varepsilon$} (0.2,0);
\draw[red] (-0.2,0) to[out=180, in=180] (0,-1) to[out=0, in=0] (0.2,0);
\draw[red,postaction={decorate,decoration}](-0.099,-1)--(-0.1,-1);
\node[above] at (0,1) {$a^-$};
\draw[blue] (-0.2,0) to[out=180, in=180] (0,1) to[out=0, in=0] (0.2,0);
\draw[blue,postaction={decorate,decoration}](0.099,1)--(0.1,1);
\node[below] at (0,-1) {$1-(a^-+\varepsilon)$};
\filldraw (0.2,0) circle[radius=1pt];

\end{scope}
\end{scope}
\end{tikzpicture}

\caption{$A,X$ and $C$ of example \ref{non_geodesic_ball_dim2}}
\end{figure}
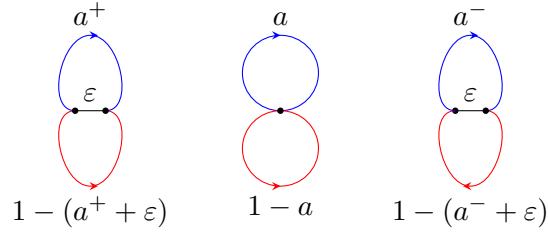

We only consider the candidates $x,y,xy$ and $xy^-$ for maximal stretching/shrinking from $A$ to $C$.
The ratios of the paths are as follow:
\begin{center}
\scalebox{1.2}{
\renewcommand{\arraystretch}{1.5}
  \begin{tabular}{ c || c | c | c | c }
	& $x$ & $y$ & $xy$ & $xy^{-1}$\\ \hline \hline    
    $\frac{A}{C}$ & $\frac{a^+ + \varepsilon}{a^-+\varepsilon}$&$\frac{1-a^+}{1-a^-}$&$\frac{1+\varepsilon}{1-\varepsilon}$& $\frac{1-\varepsilon}{1+\varepsilon}$\\ \hline
    $\frac{A}{B}$ & $\frac{a^+ + \varepsilon}{\alpha}$&$\frac{1-a^+}{1-\alpha}$&$1+\varepsilon$&$1-\varepsilon$\\ \hline
    $\frac{B}{C}$ & $\frac{\alpha}{a^-+\varepsilon}$&$\frac{1-\alpha}{1-a^-}$&$\frac{1}{1-\varepsilon}$&$\frac{1}{1+\varepsilon}$\\
  \end{tabular}
  }
\end{center}
Where $B$ is some graph in the same simplex as $X$ with edge length $\alpha$ and $1-\alpha$.\\
It is clear, that the ratios of lengths satisfy $\frac{A}{C}(x)>1>\frac{A}{C}(y)$ and  $\frac{A}{C}(xy)>\frac{A}{C}(xy^{-1})$. For $\delta<a \varepsilon$ we get the following witnesses:
\begin{itemize}
\item $xy$ is maximally shrinked from $A$ to $C$:\\
It is enough to show  $\frac{A}{C}(xy)>\frac{A}{C}(x)$:
\begin{align*}
\frac{1+\varepsilon}{1-\varepsilon}>\frac{a^+ + \varepsilon}{a^-+\varepsilon} &\iff (1+ \varepsilon)(a^-+\varepsilon)>(1- \varepsilon)(a^++\varepsilon)\\
&\iff 2(-\delta+\varepsilon(a+\varepsilon))>0 \iff \delta<a\varepsilon+\varepsilon^2
\end{align*}
which is satisfied since we have $\delta<a\varepsilon$.
\item $xy^-1$ is maximally stretched from $A$ to $C$:
\begin{align*}
\frac{1-\varepsilon}{1+\varepsilon}<\frac{1-a^+}{1-a^-} &\iff (1- \varepsilon)(1-a^-)<(1+\varepsilon)(1-a^+)\\
&\iff 2(\delta-\varepsilon(1-a))<0\iff \delta < (1-a)\varepsilon
\end{align*}
which is satisfied, since we have $\delta<a\varepsilon\leq (1-a)\varepsilon$.
\end{itemize}
Thies yields as conditions for any $B$ on a geodesic from $A$ to $C$:
\begin{itemize}
\item $xy$ is maximally stretched from $B$ to $A$:
\begin{align*}
\frac{A}{B}(xy)\geq\frac{A}{B}(x) &\iff \alpha\geq\frac{a^+ +\varepsilon}{1+\varepsilon}\\
\frac{A}{B}(xy)\geq\frac{A}{B}(y) &\iff \frac{1-a^+}{1+\varepsilon}\leq 1-\alpha &\\
\iff \alpha \leq \frac{1+\varepsilon-(1-a^+)}{1+\varepsilon}=\frac{a^+ +\varepsilon}{1+\varepsilon}
\end{align*}
\item $xy^{-1}$ is maximally stretched from $A$ to $B$:
\begin{align*}
\frac{A}{B}(xy^{-1})\leq\frac{A}{B}(y) &\iff 1-\alpha \geq \frac{1-a^-}{1+\varepsilon}\\
&\iff \alpha \leq \frac{1+\varepsilon-1+a^-}{1+\varepsilon} = \frac{a^- + \varepsilon}{1+ \varepsilon}<\frac{a^+ +\varepsilon}{1+\varepsilon}=\alpha
\end{align*}
hence we have a contradiction and such an $\alpha$ can not exist.
\end{itemize}
\end{Bem}

\begin{Bem}[geodesic joining adjacent barbell- and $\theta$-graphs]\label{bem:app_geod_join_barb_theta}
Let $A$ and $C$ be elements in $CV_n$ with marking $\textcolor{blue}{x}$ and $\textcolor{red}{y}$.
\begin{center}
\begin{tikzpicture}[scale=1.5, decoration={
		markings, mark=at position .5 with {\arrow{stealth}} }]
		\node at (-1.5,0) {$A:=$};
\draw[postaction={decorate,decoration}, blue] (-0.5,0) arc (0:360:1/4);
\node[above] at (-0.75,0.25) {$a$};
\draw (-0.5,0) -- node[above]{$b$} (0,0);
\filldraw (-0.5,0) circle[radius=1pt];
\filldraw (0,0) circle[radius=1pt];
\draw[postaction={decorate,decoration}, red] (0,0) arc (-180:180:1/4);
\node[above] at (0.25,0.25) {$1-a-b$};

\begin{scope}[shift={(3,0)}]
\node at (-1,0) {$B:=$};
\filldraw (0,0) circle[radius=1pt];
\node[above left] at (-0.25,0.25) {$\alpha$};
\node[above right] at (0.2,0.25) {$1-\alpha$};
\draw[red, postaction={decorate,decoration}] (0,0) arc (-180:180:1/4);
\draw[blue, postaction={decorate,decoration}] (0,0) arc (0:-360:1/4);
\end{scope}
\begin{scope}[shift={(5,0)}]
\node at (-0.5,0) {$C:=$};
\draw(0,0) to node[above]{$d$} (1,0);
\draw[postaction={decorate,decoration}, blue] (0,0) to [out=90,in=90] (1,0);
\node[above] at (0.5, 0.3) {$c$} ;
\draw[postaction={decorate,decoration}, red] (0,0) to [out=270,in=270] (1,0);
\node[below] at (0.5, -.3){$1-c-d$} ;
\filldraw (0,0) circle[radius=1pt];
\filldraw (1,0) circle[radius=1pt];
\end{scope}
\end{tikzpicture}
\end{center}
then there exists a symmetric geodesic from $A$ to $C$.

We will show you can choose $\alpha$ in such a manner, that it lies on a geodesic joining $A$ and $C$. By lemma \ref{glueing geodesics} it is enough to consider the witnesses from $A$ to $B$ and from $B$ to $C$, since $A$ and $B$ lie in the same simplex as do $B$ and $C$.\\
As above we only consider the candidates $x,y,xy$ and $xy^-$ for maximal stretching/shrinking from $A$ to $C$.
The ratios of the paths are as follow:
\begin{center}
\scalebox{1.2}{
\renewcommand{\arraystretch}{1.5}
  \begin{tabular}{ c || c | c | c | c }
	& $x$ & $y$ & $xy$ & $xy^{-1}$\\ \hline \hline    
    $\frac{A}{C}$ & $\frac{a}{c+d}$&$\frac{1-a-b}{1-c}$&$\frac{1+b}{1-d}$& $\frac{1+b}{1+d}$\\ \hline
 $\frac{A}{B}$ & $\frac{a}{\alpha}$&$\frac{1-a-b}{1-\alpha}$&$1+b$& $1+b$\\ \hline
 $\frac{B}{C}$ & $\frac{\alpha}{c+d}$&$\frac{1-\alpha}{1-c}$&$\frac{1}{1-d}$& $\frac{1}{1+d}$\\ 
  \end{tabular}
 }
\end{center}
Wlog. we can assume $\frac{A}{C}(x)\leq \frac{A}{C}(y)$ hence we get
\begin{align*}
\frac{A}{C} (x)\leq \frac{A}{C}(y)&\iff \frac{a}{c+d} \leq \frac{1-a-b}{1-c}\\
&\iff (1-c+c+d)a \leq (c+d)(1-b) \iff a \leq \frac{(c+d)(1-b)}{1+d}\\
&\iff \frac{a}{c+d}\leq \frac{1-b}{1+d}\leq \frac{1+b}{1+d}
\end{align*}
So we get that the ratio of lengths satisfy $\frac{A}{C}(xy)>\frac{A}{C}(xy^-)> \frac{A}{C}(x)$ and hence the maximally stretched path from $A$ to $C$ is $x$.
This yields the following restrictions for $\alpha$:
\begin{itemize}
\item $\frac{A}{B} (x)\leq \frac{A}{B} (y)$: $a(1-\alpha)\leq \alpha (1-a-b) \iff \alpha \geq \frac{a}{1-a+a-b}=\frac{a}{1-b}$
\item $\frac{A}{B} (x)\leq \frac{A}{B} (xy^-)$: $\alpha \geq \frac{a}{1+b}$
\item $\frac{B}{C} (x)\leq \frac{B}{C} (y)$: $(1-c)\alpha \leq (c+d)(1-\alpha) \iff \alpha \leq \frac{c+d}{1-c+c+d}=\frac{c+d}{1+d}$
\item $\frac{B}{C} (x)\leq \frac{B}{C} (xy^-) \leq \frac{B}{C}(xy)$: $\alpha \leq \frac{c+d}{1+d}$
\end{itemize}
Hence we have the restriction $\frac{a}{1-b} \leq \alpha \leq \frac{c+d}{1+d}$

Let's consider the maximally shrinked paths:\\
Consider the case $y$ is maximally shrinked (by above we can ignore $x$ and $xy^-\leq xy$ is always true):
\begin{itemize}
\item $\frac{A}{B} (xy)\leq \frac{A}{B} (y)$: $1- \alpha \leq \frac{1-a-b}{1+b} \iff \alpha \geq \frac{1+b-1+a+b}{1+b}=\frac{a+2b}{1+b}$ ($\geq \frac{a}{1-b}$ since $a < 1-b$)
\item $\frac{B}{C} (xy)\leq \frac{B}{C} (y): 1-\alpha \geq \frac{1-c}{1-d} \iff \alpha \leq \frac{1-d-1+c}{1-d}=\frac{c-d}{1-d} (\leq \frac{c+d}{1+d})$
\end{itemize}
So we have only these restrictions, which are fullfilled for some $\alpha$ since they are coming from a one sided geodesic.

On the other hand if $xy$ is maximally shrinked, then above inequalities only flip. But since the now higher bound from $xy$ maximal shrinked is bigger than the lower bound from $x$ maximal stretched and vice versa, the two possible intervals for $\alpha$ intersect non-empty and we have a possible $\alpha$ for the two-sided geodesic.
\end{Bem}

\begin{Bem}[proof of remark \ref{2 fun stuff with locally-minimizing geodesics} (ii)]
Let $g: I \to CV_n$ be a continuous path. As usually for any given $\varepsilon >0$ we can find a piecewise geodesic path $\tilde{g}$ which lies in a $\varepsilon$ neighbourhood of $g$ and vice versa. The problem pins now down to changing the coarse direction at the vertices of $\tilde{g}$, i.e. the points where $\tilde{g}$ is not necessarily geodesic. We can do that by iteratively jumping over the faces of the out-envelopes by consecutively going a tiny segment along each face until we reach a face of the corresponding out-envelope of our wanted direction. An example for such a face jumping path would be remark \ref{2 fun stuff with locally-minimizing geodesics} (i). The explicit construction is as follows.

\begin{tikzpicture}
\end{tikzpicture}

Let $A,B$ be two adjacent vertices of $\tilde{g}$. By proposition \ref{general_position_dense} we can wlog. assume that $B$ is in general position to $A$. Furthermore we can assume $d(A,B)<\varepsilon$.

Let $\gamma \in CW(A,B)$ and $0 < \delta$ be small enough, such that the following holds for all $A' \in B_\delta(A):=\{A' \in CV_n \ | \ d(A,A')<\delta\}$
\begin{enumerate}[(i)]
\item $\delta < \varepsilon$ and $B_\delta(A) \subset T(A)$
\item $B \in \Env_R^{out}(A',\gamma)$
\item We have for all $\omega, \varphi \in \cand(A)$ if $\Env_R^{out}(A,\omega)$ and $ \Env_R^{out}(A,\varphi)$ intersect at a hyperplane, then $\Env_R^{out}(A',\omega)$ and $ \Env_R^{out}(A',\varphi)$ intersect at a hyperplane.
\end{enumerate}
Where (ii) can be satisfied by the definition of general position and since out-envelopes from $A'$ in $T(A)$ depend continuously on $A'$ (as can be seen in the $(\star)$-inequality of lemma \ref{wiggling polytope is polytope}) we can choose $\delta$ small enough to satisfy (iii).

Let now $\gamma_0 \in \cand(A)$ be a coarse direction just before $\tilde{g}$ enters $A$ and $\gamma_1, \dots, \gamma_n=\gamma \in \cand(A)$ such that $\Env_R^{out}(A,\gamma_i)$ and $\Env_R^{out}(A,\gamma_{i+1})$ intersect at a hyperplane. For $A_0:=A$ and by the properties of $B_\delta(A)$ we can now inductively find $A_i \in B_{\delta/n}(A_{i-1}) \cap \Env_R^{out}(A_{i-1},\gamma_{i-1})\cap \Env_R^{out}(A_{i-1},\gamma_i)$ for $i \in \{1, \dots, n\}$ and set $A_{n+1}:=B$. By lemma \ref{glueing geodesics} we can now glue any geodesics $g_i$ from $A_i$ to $A_{i+1}$
to get a locally minimizing geodesic $\bar{g}|_{[A,B]}:= g_0 * \dots * g_n$ from $A$ to $B$ with $\gamma_0$ and $\gamma$ as coarse directions near $A$ resp. $B$. This means, by lemma \ref{glueing geodesics} the concatenation of all segments $\bar{g}|_{[A,B]}$ for $A, B$ vertices of $\tilde{g}$ as before is a locally minimizing geodesic.

By $\delta<\varepsilon$ we get $A_1, \dots, A_n \in B_\varepsilon(A), g_0, \dots, g_{n-1} \subset B_\varepsilon(A)$ and since $d(A,B)< \varepsilon$ also $g_n \subset B_{2\varepsilon}(B)$. Hence $\bar{g}$ is in a $2\varepsilon$-neighbourhood of $\tilde{g}$ and thus in a $3\varepsilon$-neighbourhood of $g$. On the other hand since $\tilde{g}$ is piecewise geodesic we also have $\tilde{g}$ is in a $\varepsilon$-neighbourhood of $\bar{g}$ and so $g$ is in a $2\varepsilon$-neighbourhood of $\bar{g}$.

For the dense statement take any dense continuous ray $g$, which exists since $CV_n$ is a connected simplicial complex with countably many simplices. Partition $g$ into a sequence $(g_i)_{i \in \N}$ parts and apply previous statement on each $g_i$ with $\varepsilon_i=\frac{1}{n}$. Since we can choose the $\tilde{g}_i$ to have fitting endpoints and the $\bar{g}_i$ don't change them we are done.
\end{Bem}

\bibliographystyle{amsalpha}
\bibliography{references}

\end{document}